\newtheorem{theorem}{Theorem}[section]
\newtheorem{lemma}[theorem]{Lemma}
\newtheorem{corollary}[theorem]{Corollary}
\newtheorem{proposition}[theorem]{Proposition}
\newtheorem{example}[theorem]{Example}
\theoremstyle{definition}
\newtheorem{definition}[theorem]{Definition}
\newtheorem{problem}[theorem]{Problem}
\theoremstyle{remark}
\newtheorem{remark}[theorem]{Remark}
\numberwithin{equation}{section}
\numberwithin{theorem}{section}
\newcommand{\e}{\varepsilon}
\newcommand{\w}{\omega}
\newcommand{\NN}{\mathbb{N}}
\newcommand{\IR}{\mathbb{R}}
\newcommand{\xxx}{\mathbf{x}}
\newcommand{\KK}{\mathcal{K}}
\newcommand{\AAA}{\mathcal{A}}
\newcommand{\Ra}{\Rightarrow}
\newcommand{\La}{\Leftarrow}
\newcommand{\LRa}{\Leftrightarrow}
\newcommand{\Id}{\mathsf{id}}
\newcommand{\CC}{C_k}
\newcommand{\SM}{{\setminus}}
\def\cM{\mathcal{M}}
\def\cB{\mathcal{B}}
\def\cU{\mathcal{U}}
\def\cV{\mathcal{V}}
\def\om{\omega}
\def\bt{\beta}
\def\kr{$k_\IR$}
\def\sr{$s_\IR$}
\def\:{\colon}
\def\si{\sigma}
\def\Si{\Sigma}
\def\rst#1{{\restriction}_{#1}}
\newenvironment{new-content}
    {\color{blue}}
    {}
\begin{document}

\begin{frontmatter}

\title{On  $k_\IR$-spaces and $s_\IR$-spaces}

\author{Saak Gabriyelyan}
\ead{saak@math.bgu.ac.il}
\address{Department of Mathematics, Ben-Gurion University of the Negev, Beer-Sheva, P.O. 653, Israel}

\author{Evgenii Reznichenko}
\ead{erezn@inbox.ru}
\address{Department of Mathematics, Lomonosov Mosow State University, Moscow, Russia}

\begin{abstract}
We give new characterizations of spaces $X$ which are  $k_\IR$-spaces or $s_\IR$-spaces. Applying the obtained results we provide some sufficient and necessary conditions on $X$ for which $C_p(X)$ is a $k_\IR$-space or an $s_\IR$-space. It is proved that $C_p(X)$ is a $k_\IR$-space for any  space $X$ with one non-isolated point; if, in addition, $|X|$ is not sequential, then $C_p(X)$ is even an $s_\IR$-space. Under $(CH)$, it is shown that there exists a separable metrizable space $X$ such that $C_p(X)$ is an Ascoli space but not a $k_\IR$-space.
\end{abstract}

\begin{keyword}
Ascoli space \sep $k_\IR$-space \sep $s_\IR$-space \sep  $k^{\#}$-completion

\MSC[2010] 54A05 \sep  54B05 \sep   54C35 \sep 54D50

\end{keyword}

\end{frontmatter}


\section{Introduction}


All spaces are assumed to be Tychonoff. We denote by $C_p(X)$ and $\CC(X)$ the space $C(X)$ of all continuous real-valued functions on a space $X$ endowed with the pointwise topology or the compact-open topology, respectively. Recall that a space $X$ is called a {\em $k$-space} if for each non-closed subset $A\subseteq X$ there is a compact subset $K\subseteq X$ such that $A\cap K$ is not closed in $K$. The space $X$ is a {\em $k_\IR$-space} if every $k$-continuous function $f:X\to\IR$ is continuous (recall that $f$ is {\em $k$-continuous} if each restriction of $f$ to any compact set $K\subseteq X$ is continuous). Each $k$-space is a $k_\IR$-space, but the converse is false in general. $k$-spaces and $k_\IR$-spaces are widely studied in General Topology (see, for example, the classical book of Engelking \cite{Eng} or  Michael's article \cite{Mi73}). The meaning of $k_\IR$-spaces is explained by the following characterization (see for example Theorem 5.8.7 of \cite{NaB}).

\begin{theorem} \label{t:kR-classical}
A space $X$ is a $k_\IR$-space if, and only if, the space $\CC(X)$ is complete.
\end{theorem}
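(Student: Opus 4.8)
The statement is the classical characterization: $X$ is a $k_\IR$-space iff $\CC(X)=(C(X),\tau_{co})$ is complete. I would prove it in two directions, and the natural bridge in both is the algebra $C_k(X,K)$ of $k$-continuous functions, carrying the topology of uniform convergence on compact sets.

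First, the nontrivial containment used throughout: $\CC(X)$ sits inside the space $C_k(X)$ of $k$-continuous real-valued functions with the compact-open topology, and I claim $C_k(X)$ is \emph{complete}. Indeed, a Cauchy net $(f_\alpha)$ in $C_k(X)$ converges uniformly on each compact $K\subseteq X$ to some function $f$, since $(C(K),\|\cdot\|_\infty)$ is complete for every compact $K$; uniform limits of continuous functions on $K$ are continuous on $K$, so $f\rst{K}$ is continuous for each compact $K$, i.e.\ $f$ is $k$-continuous, and $f_\alpha\to f$ in $C_k(X)$. So $C_k(X)$ is always complete regardless of $X$. Now: if $X$ is a $k_\IR$-space, then every $k$-continuous $f:X\to\IR$ is continuous, so $C_k(X)=C(X)$ as sets, and the topologies coincide (both are compact-open), giving $\CC(X)=C_k(X)$ complete. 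Conversely, if $\CC(X)$ is complete, I must show every $k$-continuous $f$ is continuous; the plan is to realize $f$ as a limit of a net from $C(X)$ in a completion and use completeness of $\CC(X)$ to pull the limit back into $C(X)$. Concretely, $\CC(X)$ is a dense subspace of $C_k(X)$ precisely when\,... — this is exactly the delicate point, so let me isolate it.

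The main obstacle is density: one needs that $C(X)$ is dense in $C_k(X)$ for the compact-open topology. This is where a Tychonoff-space argument enters. Given $k$-continuous $f$, a basic neighborhood of $f$ in $C_k(X)$ is determined by finitely many compacta $K_1,\dots,K_n$ and $\e>0$; since $\bigcup K_i$ is compact, it suffices to approximate $f\rst{K}$, $K$ compact, by (the restriction of) a \emph{globally} continuous function within $\e$. The function $f\rst{K}\in C(K)$; by the Tietze--Urysohn extension theorem, since $X$ is Tychonoff (hence $K$ is closed and $X$ is... well, one needs normality of $X$ or at least extendability), extend $f\rst{K}$ to a continuous $g\in C(X)$ — here one may first embed $X$ in a compact space $\beta X$, extend over $\beta X$, and restrict. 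Then $g\in C(X)$ agrees with $f$ on $K$, so $g$ is in the chosen neighborhood of $f$. Hence $C(X)$ is dense in $C_k(X)$. Combining: $\CC(X)$ dense in the complete space $C_k(X)$; if $\CC(X)$ is itself complete it is closed in $C_k(X)$, hence equals $C_k(X)$, so every $k$-continuous function is continuous and $X$ is a $k_\IR$-space. Conversely a $k_\IR$-space gives $\CC(X)=C_k(X)$, which is complete.

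I expect the extension/density step to be the crux: making sure the Tietze-type extension really produces a \emph{globally} continuous function on all of $X$ (not just on a normal subspace) — the clean route is to pass through $\beta X$, use that $X$ is $C$-embedded in $\beta X$ is false in general, so instead extend $f\rst{K}$ from the compact $K$ over $\beta X$ (Tietze on the compact Hausdorff, hence normal, space $\beta X$, with $K$ closed there) and restrict back to $X$. Once density is in hand, the completeness bookkeeping (a complete subspace of a Hausdorff uniform space is closed; a dense closed subspace is the whole space) is routine, and the reverse direction is immediate from Theorem statement's own definitions. I would also remark that the same argument shows more generally that the completion of $\CC(X)$ is $C_k(X)$ for every Tychonoff $X$.
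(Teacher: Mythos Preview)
The paper does not prove Theorem~\ref{t:kR-classical}; it is stated in the introduction with a reference to Theorem~5.8.7 of \cite{NaB} and used as a classical fact. So there is no ``paper's own proof'' to compare against, and your proposal should be assessed on its merits.

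Your argument is correct and is essentially the standard one. The identification of the completion of $\CC(X)$ with the space of $k$-continuous functions under the compact-open topology is exactly the right bridge, and both directions go through as you describe: completeness of $C_k(X)$ is routine, and the density step via Tietze on $\beta X$ is the clean way to extend $f{\restriction}_K$ to a globally continuous function on $X$. One minor slip: you write that ``$X$ is $C$-embedded in $\beta X$ is false in general,'' but in fact every Tychonoff space \emph{is} $C$-embedded in its Stone--\v{C}ech compactification (this is part of the universal property). What you actually need, and what you correctly argue, is that each compact $K\subseteq X$ is $C$-embedded in $X$; your route through $\beta X$ establishes precisely this, and the paper itself uses this fact freely (see the proof of the equivalence (ii)$\Leftrightarrow$(iii) in Theorem~\ref{t:characterization-kR}). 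With that wording corrected, the proof is complete, and your closing remark that $C_k(X)$ is the completion of $\CC(X)$ for arbitrary Tychonoff $X$ is a nice bonus.
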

Another known characterization of $k_\IR$-spaces is obtained in \cite{Banakh-Survey} which will be re-proved below.
\smallskip

One of the basic theorems in Analysis is the Ascoli theorem which states that if $X$ is a $k$-space, then every compact subset of $\CC(X)$ is evenly continuous, see Theorem 3.4.20 in \cite{Eng}. In \cite{Noble}, Noble proved that every $k_\IR$-space satisfies the conclusion of the Ascoli theorem.
So it is natural to consider the class of spaces which satisfy the conclusion of Ascoli's theorem. Following \cite{BG}, a  space $X$ is called an {\em Ascoli space} if every compact subset $\KK$ of $\CC(X)$  is evenly continuous, that is the map $X\times\KK \ni(x,f)\mapsto f(x)$ is continuous. In other words, $X$ is Ascoli if and only if the compact-open topology of $\CC(X)$ is Ascoli in the sense of \cite[p.45]{mcoy}. One can easily show that $X$ is Ascoli if and only if every compact subset of $\CC(X)$ is equicontinuous. Recall that a subset $H$ of $C(X)$ is {\em equicontinuous} if for every $x\in X$ and each $\e>0$ there is an open neighborhood $U$ of $x$ such that $|f(x')-f(x)|<\e$ for all $x'\in U$ and $f\in H$. For numerous result concerning the Ascoli property, see \cite{Gabr-LCS-Ascoli,Gabr-B1,Gab-LF,Gabr:weak-bar-L(X),Gabr-seq-Ascoli,GGKZ,GGKZ-2}.
The necessity in the next theorem is proved in \cite{GGKZ} and the sufficiency in \cite{Gabr-B1} (for  more general assertions, see Section 5 of \cite{Gabr-seq-Ascoli}).
\begin{theorem} \label{t:Cp-Ascoli}
For a space $X$, the space $C_p(X)$ is Ascoli if, and only if, $X$ has the property $(\kappa)$.
\end{theorem}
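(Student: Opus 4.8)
The plan is to prove the two implications separately, following \cite{GGKZ} for the necessity and \cite{Gabr-B1} for the sufficiency. Throughout put $E:=C_p(X)$; this is a locally convex topological vector space whose zero neighbourhoods have the basic form $[A;\e]:=\{f\in E:\ |f(x)|<\e\ \text{for all }x\in A\}$ with $A\subseteq X$ finite and $\e>0$. Applying the equivalence ``Ascoli $\Leftrightarrow$ every compact subset of $\CC(\cdot)$ is equicontinuous'' (recalled above) to $E$ in place of $X$, the assertion ``$C_p(X)$ is Ascoli'' becomes ``every compact $\KK\subseteq\CC(E)$ is equicontinuous on $E$''; and since the translations of $E$ are homeomorphisms, a compact family in $\CC(E)$ is equicontinuous at a point $g\in E$ if and only if its translate is equicontinuous at $0$, so it is enough to test equicontinuity at the single point $0\in E$. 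Thus everything reduces to showing that every compact $\KK\subseteq\CC(E)$ is equicontinuous at $0$ if and only if $X$ has property $(\kappa)$.

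For the direction ``Ascoli $\Rightarrow(\kappa)$'' I would argue by contraposition: fix a sequence $(A_n)_{n\in\NN}$ of finite subsets of $X$ witnessing the failure of $(\kappa)$, and build from it a countable set $\KK=\{\psi_n:n\in\NN\}\cup\{\psi_\infty\}\subseteq\CC(E)$, where $\psi_n$ is a continuous real function on $E$ depending only on $f\!\rst{A_n}$ — a suitably normalised (signed) combination of the evaluations $f\mapsto f(x)$, $x\in A_n$, truncated to stay bounded — and $\psi_\infty$ is its limit. One then checks: \emph{(i)} $\KK$ is compact in $\CC(E)$, i.e.\ $\psi_n\to\psi_\infty$ uniformly on every compact $\LL\subseteq E$ — here one uses that compact subsets of $C_p(X)$ are pointwise bounded and, after a preliminary refinement of the $(\kappa)$-witness, that the restrictions $\{f\!\rst{A_n}:f\in\LL\}$ become so ``flat'' that the normalised combinations vanish in the limit; and \emph{(ii)} $\KK$ is not equicontinuous at $0$ — given any finite $F\subseteq X$ and any $\delta>0$, since the $A_n$ are not confined to a finite set, complete regularity of $X$ yields $f\in[F;\delta]$ on which some $\psi_n$ differs from $\psi_n(0)$ by a fixed $\e_0>0$. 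This contradicts the reduced form of the Ascoli property.

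For the direction ``$(\kappa)\Rightarrow$ Ascoli'' I would take an arbitrary compact $\KK\subseteq\CC(E)$ and assume it fails to be equicontinuous at $0$, so there is $\e_0>0$ with the property that for each finite $F\subseteq X$ and each $n$ one can choose $\psi_{F,n}\in\KK$ and $f_{F,n}\in[F;1/n]$ with $|\psi_{F,n}(f_{F,n})-\psi_{F,n}(0)|\ge\e_0$. Along an increasing exhaustion $F_1\subseteq F_2\subseteq\cdots$ of the finite subsets of $X$ relevant to $\KK$ this produces sequences $\psi_n\in\KK$ and $f_n\in[F_n;1/n]$ with $|\psi_n(f_n)-\psi_n(0)|\ge\e_0$; using continuity of $\psi_n$ on $C_p(X)$ one extracts a finite set $A_n\subseteq X\SM F_n$ on whose coordinates $\psi_n$ already distinguishes $f_n$ from $0$. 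Now feed $(A_n)_{n\in\NN}$ into property $(\kappa)$ to obtain the associated sequence $(B_n)_{n\in\NN}$; its defining property lets one pass to a cluster point $\psi_\infty\in\KK$ of $(\psi_n)$ and to a limiting regime for the $f_n$ that is controlled on a \emph{single} neighbourhood $[B;\e]$ of $0$ and in which the inequalities $|\psi_n(f_n)-\psi_n(0)|\ge\e_0$ persist — contradicting the continuity of $\psi_\infty$ (or the boundedness of $\KK$ on a fixed compact subset of $E$). Hence every compact $\KK$ is equicontinuous at $0$ and $C_p(X)$ is Ascoli.

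The main obstacle, in both directions, is the passage between genuine (and possibly nonlinear) continuous functions on $C_p(X)$ and the purely combinatorial data of finite subsets of $X$. In the necessity direction the delicate point is the verification of compactness of $\KK$ in the \emph{compact-open} topology of $\CC(C_p(X))$: this needs a usable description of the compact subsets of $C_p(X)$ together with a careful (refined) choice of the $(\kappa)$-witness so that the normalised functionals converge uniformly on all of them. In the sufficiency direction the analogous difficulty is extracting, from an arbitrary failure of equicontinuity at $0$, a sequence of finite ``supports'' $A_n$ regular enough that $(\kappa)$ applies and that its conclusion can be turned into a contradiction. I expect the compactness verification in the necessity direction to be the technical heart; the remainder is bookkeeping with the basic neighbourhoods $[F;\e]$ and the definition of $(\kappa)$.
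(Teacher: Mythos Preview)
This theorem is not proved in the paper; it is quoted from the literature, with the necessity attributed to \cite{GGKZ} and the sufficiency to \cite{Gabr-B1}. There is thus no in-paper argument to compare against, and your proposal must stand on its own.

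Your sketch of the necessity (Ascoli $\Rightarrow(\kappa)$) is in the right spirit and matches the strategy of \cite{GGKZ}: from a $(\kappa)$-witness $(A_n)$ one manufactures a null sequence $\psi_n\to 0$ in $\CC\big(C_p(X)\big)$ that fails to be equicontinuous at $0$. The details you leave open (the exact form of the $\psi_n$ and the verification that $\psi_n\to 0$ uniformly on compacta of $C_p(X)$) are genuine work but can be carried out along the lines you indicate.

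The sufficiency direction, however, has a real gap, and your final assessment of where the difficulty lies is inverted. After choosing $\psi_n\in\KK$ and $f_n\in[F_n;1/n]$ with $|\psi_n(f_n)-\psi_n(0)|\ge\e_0$, you propose to extract a finite ``support'' $A_n\subseteq X\setminus F_n$ from the continuity of $\psi_n$ and then feed $(A_n)$ into $(\kappa)$. The obstruction is this: continuity of $\psi_n$ at $f_n$ yields a finite $G_n$ and a tolerance $\delta_n>0$ such that $g\in f_n+[G_n;\delta_n]$ forces $\psi_n(g)\approx\psi_n(f_n)$; but $\delta_n$ is determined only \emph{after} $f_n$ has been chosen and may be far smaller than the $1/n$ already committed on $F_n$. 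When you then try to replace $f_{n_k}$ by a function $g_k$ supported in the point-finite open set $U_k\supseteq A_{n_k}$ (so that $g_k\to 0$ pointwise and $\{0\}\cup\{g_k\}$ is compact in $C_p(X)$), you cannot in general keep $g_k$ inside $f_{n_k}+[G_{n_k};\delta_{n_k}]$ on the part $G_{n_k}\cap F_{n_k}$, and the crucial inequality need not persist for $g_k$. Your closing sentence (``pass to a cluster point $\psi_\infty$ \dots\ controlled on a single neighbourhood $[B;\e]$'') is a hope, not an argument, and does not address this mismatch between $\delta_n$ and $1/n$. The published route to sufficiency is different and avoids working with arbitrary nonlinear $\psi\in\CC\big(C_p(X)\big)$ altogether: one shows (Sakai \cite{Sak2}; see also \cite{Gabr-B1}) that property $(\kappa)$ forces $C_p(X)$ to be $\kappa$-Fr\'echet--Urysohn, and then invokes the implication ``$\kappa$-Fr\'echet--Urysohn $\Rightarrow$ Ascoli'' recorded in the diagram above.
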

\smallskip

Analogously to $k_\IR$-spaces, one can define $s_\IR$-spaces as follows. A space $X$ is called  an {\em $s_\IR$-space} if every sequentially continuous function $f:X\to\IR$ is continuous. Recall that a function $f:X\to Y$ between topological spaces $X$ and $Y$ is called {\em sequentially continuous} or {\em $s$-continuous} if the restriction of $f$ onto each convergent sequence in $X$ is continuous. $s$-continuous functions and $s$-continuous functionals appear naturally not only in General Topology, but also in Functional Analysis, see for example Mazur \cite{Mazur1952} and Wilansky \cite{Wilansky}.
\smallskip

The aforementioned results and discussion motivate us to study $k_\IR$-spaces and $s_\IR$-spaces, and relationships between these and other classes  of spaces, in particular, for function spaces.
\smallskip

Now we describe the content of the article. In Section \ref{sec:preliminary} we recall other notions used in the article. In Section \ref{sec:px}, the basic properties of universally $\sigma$-measurable spaces and universal $\sigma$-measure zero spaces are established. Section \ref{sec:noble} contains the results from N. Noble's article \cite{Nob} and some of their consequences which are actively used in the article. In Section \ref{sec:sfg} we consider spaces which are (strongly) functionally generated by some cover. Numerous characterizations of $k_\IR$-spaces are given in Section \ref{sec:k-R-spaces}. In Section \ref{sec:s-R-spaces}, we propose analogous characterizations of $s_\IR$-spaces. In Proposition \ref{p:sR-R-tightness} we show that each $s_\IR$-space has countable $\IR$-tightness. In Section \ref{p:cp-sep} we study spaces, in particular, $C_p$-spaces which are strongly sequentially separable. In Theorem \ref{t:separable-p-ks} we show that any separable space $X$ such that $\chi(X)<\mathfrak{p}$ is an $s_\IR$-space.
It is natural to study space on which any $s$-continuous function is also $k$-continuous. We call such spaces by $sk$-spaces and study them in Section \ref{sec:sk}. In Section \ref{sec:func-cpb} we consider probability measures on $C_p(X,[-1,1])$ and introduce $\kappa$-measurable and universal $\kappa$-measure zero spaces. These results essentially used in the  last Section \ref{sec:Cp-kR-sR} which is devoted to the $C_p$-part of the following general problem.
\begin{problem} \label{prob:Cp-kR-sR}
Characterize spaces $X$ for which $C_p(X)$ and $\CC(X)$ are $k_\IR$-spaces or $s_\IR$-spaces.
\end{problem}
In Proposition \ref{p:Cp-kR-necessary} we stand necessary conditions on $X$ for which $C_p(X)$ is a $k_\IR$-space. In Theorem \ref{t:exa-kR-Cp}, we prove that $C_p(X)$ is a $k_\IR$-space for any space $X$ with one non-isolated point. In Theorem \ref{t:sss-sr} and Theorem \ref{t:Cp-sR-space-necessary} we give a sufficient and necessary conditions on $X$ for which $C_p(X)$ is an $s_\IR$-space. Finally, under $(CH)$, we show in Proposition \ref{p:Cp-Sierpinski} that there exists a separable metrizable space $X$ such that $C_p(X)$ is an Ascoli space but not a $k_\IR$-space. This gives a partial positive answer to a question posed in \cite{BG}. Numerous open problems are posed.


\section{Preliminary results} \label{sec:preliminary}


Set $\w=\{0,1,2,\dots\}$ and $\NN=\{1,2,\dots\}$. Denote by $\mathbf{s}=\{\tfrac{1}{n}\}_{n\in\NN}\cup\{0\}$ the convergent sequence with the topology induced from $\IR$.

All spaces are assumed to be Tychonoff. Let $X$ be a space. We shall say that $X$ is {\em closely embedded into } a space $Y$ if there is an embedding $f:X\to Y$ whose image $f(X)$ is a closed subspace of $Y$. A base of the compact-open topology of $\CC(X)$ (resp., a base of the pointwise topology of $C_p(X)$) consists of the sets
\[
[K;\e]=\big\{f\in C(X):f(K)\subseteq (\-\e,\e)\big\},
\]
where $K$ is a compact (resp., finite) subset of $X$ and $\e>0$. We denote by $\KK(X)$, $\mathcal{S}(X)$ and $\mathcal{C}(X)$ the family of all compact subsets of $X$, the family of all convergent sequences in $X$ with their limit points, and the family of all countable subsets of $X$, respectively.

A family  $\{ A_i\}_{i\in I}$ of subsets of a set $X$ is called {\em point-finite} if for every $x\in X$, the set $\{ i\in I: x\in A_i\}$ is finite. A family  $\{ A_i\}_{i\in I}$ of subsets of a topological space $X$ is said to be {\em strongly point-finite} if for every $i\in I$ there is an open set $U_i$ of $X$  such that $A_i\subseteq U_i$ and  the family $\{ U_i: i\in I\}$ is point-finite.
Following Sakai \cite{Sak2}, a topological space $X$ is said to have the {\em property $(\kappa)$} if every pairwise disjoint sequence of finite subsets of $X$ has a strongly point-finite subsequence. Clearly, if $Y$ is a subspace of $X$, then also $Y$ has the property $(\kappa)$.

A subset $A$ of a space $X$ is called {\em functionally bounded} if $f(A)$ is a bounded subset of $\IR$ for every $f\in C(X)$.
A space $X$ is called a {\em $\mu$-space} if every functionally bounded subset of $X$ has compact closure. We denote by $\beta X$, $\upsilon X$ and  $\mathcal{D} X$ the Stone-\v{C}ech compactification, the Hewitt realcompactification and the Dieudonn\'e completion of $X$, respectively. We denote by $\mu X$ the {\em $\mu$-completion} of $X$, i.e. $\mu X$  is the smallest $\mu$-subspace of $\beta X$ that contains $X$.

Let  $f:X\to Y$ be a continuous mapping between spaces $X$ and $Y$. The {\em adjoint map} $f^{\#}:C_p(Y)\to C_p(X)$ of $f$ is defined by $f^{\#}(g)(x):=g\big(f(x)\big)$.

A space $X$ is {\em perfectly normal} if  it is normal and every closed subset is a $G_\delta$ subset. It is well-known that a space $X$ is perfectly normal if, and only if, any closed subset $A$ of $X$ is a zero set (that is, $A=f^{-1}(0)$ for some $f\in C(X)$). It is known that any Souslin space is perfectly normal. Recall that  a space $X$ is {\em Souslin} if $X$ is a continuous image of a Polish space (a space is {\em Polish} if it is separable and complete-metrizable).

Following Karnik and Willard \cite{Kar-Wil}, a continuous mapping $p:X\to Y$ is called {\em $R$-quotient} ({\em real-quotient}) if $p$ is continuous, onto, and every function $\phi:Y\to\IR$ is continuous whenever the composition $\phi\circ p$ is continuous. Clearly, every quotient mapping as well as each surjective open mapping is $R$-quotient. The following characterization of $R$-quotient mappings is Proposition 0.4.10 of \cite{Arhangel}.
\begin{proposition}[\cite{Arhangel}] \label{p:R-quotient-characterization}
A continuous surjective mapping $p:X\to Y$ is $R$-quotient if, and only if, $p^{\#}\big(C_p(Y)\big)$ is a closed subspace of $C_p(X)$.
\end{proposition}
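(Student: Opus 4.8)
The plan is to pin down the closure of $p^{\#}\big(C_p(Y)\big)$ inside $C_p(X)$ and then extract the equivalence from it. Set
\[
A:=\big\{f\in C(X): f(x)=f(x')\ \text{whenever}\ p(x)=p(x')\big\},
\]
i.e.\ $A$ is the set of continuous functions that are constant on the fibers of $p$. First I would verify two elementary facts. (i) $p^{\#}\big(C_p(Y)\big)\subseteq A$, because $g\circ p$ is constant on fibers for every $g\in C(Y)$. (ii) $A$ is closed in $C_p(X)$: it is the intersection, over all pairs $(x,x')$ with $p(x)=p(x')$, of the sets $\{f\in C_p(X):f(x)=f(x')\}$, each of which is the preimage of $\{0\}$ under the continuous evaluation-difference functional $f\mapsto f(x)-f(x')$, hence closed.

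The heart of the argument is the claim that $p^{\#}\big(C_p(Y)\big)$ is \emph{dense} in $A$, so that $\overline{p^{\#}\big(C_p(Y)\big)}=A$. To prove it, fix $f\in A$ together with a basic neighborhood of $f$ in $C_p(X)$, determined by a finite set $\{x_1,\dots,x_n\}\subseteq X$ and some $\e>0$. Since $f$ is constant on fibers, it induces a function $\phi\colon Y\to\IR$ (a priori discontinuous) with $f=\phi\circ p$; putting $y_i=p(x_i)$ we get $\phi(y_i)=f(x_i)$, and the assignment $y_i\mapsto f(x_i)$ is well defined on the finitely many distinct points among the $y_i$ (if $y_i=y_j$ then $f(x_i)=f(x_j)$). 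Here I would invoke complete regularity of $Y$: the finitely many distinct points $y_i$ can be separated by continuous functions, so an appropriate linear combination gives $g\in C(Y)$ with $g(y_i)=f(x_i)$ for all $i$. Then $p^{\#}(g)=g\circ p$ coincides with $f$ on $\{x_1,\dots,x_n\}$ and thus lies in the prescribed neighborhood, establishing the claim.

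It remains to observe the purely formal equivalence ``$p$ is $R$-quotient $\iff p^{\#}\big(C_p(Y)\big)=A$''. If $p$ is $R$-quotient and $f\in A$, then $f=\phi\circ p$ is continuous, hence $\phi\in C(Y)$ and $f=p^{\#}(\phi)$; combined with (i) this gives $p^{\#}\big(C_p(Y)\big)=A$. Conversely, if $p^{\#}\big(C_p(Y)\big)=A$ and $\phi\circ p$ is continuous for some $\phi\colon Y\to\IR$, then $\phi\circ p\in A=p^{\#}\big(C_p(Y)\big)$, so $\phi\circ p=g\circ p$ for some $g\in C(Y)$, and since $p$ is surjective $\phi=g$ is continuous. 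Putting everything together: $p^{\#}\big(C_p(Y)\big)$ is closed in $C_p(X)$ $\iff$ $p^{\#}\big(C_p(Y)\big)=\overline{p^{\#}\big(C_p(Y)\big)}=A$ $\iff$ $p$ is $R$-quotient. I do not expect a genuine obstacle; the one place that needs care is the density step, where complete regularity of $Y$ is exactly what forces $\overline{p^{\#}\big(C_p(Y)\big)}$ to be all of $A$ and not merely some closed subset of it.
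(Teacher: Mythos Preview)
Your argument is correct: identifying the closure of $p^{\#}\big(C_p(Y)\big)$ with the algebra $A$ of fiber-constant continuous functions and then checking that $p^{\#}\big(C_p(Y)\big)=A$ is equivalent to $p$ being $R$-quotient is exactly the standard route, and every step you outline goes through (the density step is the only nontrivial point, and complete regularity of $Y$ handles it as you say).

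Note, however, that the paper does \emph{not} prove this proposition: it is simply quoted as Proposition~0.4.10 of Arhangel'skii's book \cite{Arhangel}, so there is no in-paper proof to compare against. Your write-up matches the argument given there.
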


We shall use repeatedly the following assertion, see Theorem 4.7 of \cite{Oku-Par}.
\begin{theorem}[\cite{Oku-Par}] \label{t:rq-prod}
Let $p:X\to Y$ be an $R$-quotient mapping between spaces. Then for every locally compact space $Z$, the product mapping $p\times\Id_Z:X\times Z\to Y\times Z$ is $R$-quotient.
\end{theorem}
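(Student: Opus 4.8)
The plan is to run the argument through the exponential law for the locally compact factor $Z$: this reduces the continuity of a real-valued function on $Y\times Z$ to the continuity of an associated map into the function space $\CC(Z)$, and the hypothesis that $p$ is $R$-quotient can then be applied, one continuous real-valued function at a time, to that map.

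Write $q:=p\times\Id_Z:X\times Z\to Y\times Z$, which is onto since $p$ and $\Id_Z$ are. Let $\phi:Y\times Z\to\IR$ be such that $g:=\phi\circ q:X\times Z\to\IR$ is continuous; the goal is to show that $\phi$ is continuous. For each $x\in X$ the map $z\mapsto g(x,z)=\phi\big(p(x),z\big)$ belongs to $C(Z)$, and since $p$ is surjective this yields $\phi(y,\cdot)\in C(Z)$ for every $y\in Y$. Thus one may define maps $\hat\phi:Y\to\CC(Z)$ and $\hat g:X\to\CC(Z)$ by $\hat\phi(y):=\phi(y,\cdot)$ and $\hat g(x):=g(x,\cdot)$, and manifestly $\hat g=\hat\phi\circ p$. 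A routine application of the tube lemma, valid for an arbitrary space $Z$, shows that continuity of $g$ on the product $X\times Z$ implies continuity of $\hat g:X\to\CC(Z)$.

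The central step is to upgrade continuity of $\hat g=\hat\phi\circ p$ to continuity of $\hat\phi$ itself. Being a Hausdorff topological vector space, $\CC(Z)$ is Tychonoff, hence a map into $\CC(Z)$ is continuous provided its composition with every continuous real-valued function on $\CC(Z)$ is continuous (equivalently, one fixes a topological embedding $\CC(Z)\hookrightarrow\IR^{I}$ and checks continuity coordinatewise). So let $\psi$ be a continuous real-valued function on $\CC(Z)$. Then $\psi\circ\hat\phi\circ p=\psi\circ\hat g:X\to\IR$ is continuous, and since $p$ is $R$-quotient the real-valued function $\psi\circ\hat\phi$ on $Y$ is continuous. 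As $\psi$ was arbitrary, $\hat\phi:Y\to\CC(Z)$ is continuous.

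Finally, because $Z$ is locally compact the evaluation map $\mathrm{ev}:\CC(Z)\times Z\to\IR$, $(h,z)\mapsto h(z)$, is continuous, so $\phi=\mathrm{ev}\circ(\hat\phi\times\Id_Z):Y\times Z\to\IR$ is continuous; together with the surjectivity of $q$ this proves that $q$ is $R$-quotient. The only genuinely non-formal ingredient is the exponential law for the locally compact space $Z$, and this is precisely where local compactness is indispensable: it is what makes continuity of $\hat\phi$ equivalent to \emph{joint} continuity of $\phi$ rather than to mere separate continuity (the argument collapses if one replaces $\CC(Z)$ by $C_p(Z)$), and it is what permits $\phi$ to be reconstructed from $\hat\phi$. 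An alternative organisation first settles the case of compact $Z$ and then, for a locally compact $Z$, covers $Z$ by relatively compact open sets $W$, applies the compact case to $\phi|_{Y\times\overline{W}}$, and invokes the locality of continuity; the route above avoids that reduction.
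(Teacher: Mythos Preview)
The paper does not prove this theorem; it merely quotes it as Theorem~4.7 of \cite{Oku-Par} and uses it as a black box. So there is no proof in the paper to compare against.

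Your argument is correct and is the natural exponential-law proof. The three ingredients you use are all standard and valid: (a) for any space $Z$, continuity of $g:X\times Z\to\IR$ implies continuity of $\hat g:X\to\CC(Z)$ (this is the tube-lemma direction of the exponential correspondence and does not need local compactness); (b) a map into a Tychonoff space $T$ is continuous as soon as its composition with every $\psi\in C(T)$ is continuous (embed $T\hookrightarrow\IR^{C(T)}$ and check coordinates), which lets you feed the $R$-quotient hypothesis on $p$ one real-valued function at a time to get continuity of $\hat\phi$; and (c) local compactness of $Z$ makes the evaluation map $\CC(Z)\times Z\to\IR$ continuous, which converts continuity of $\hat\phi$ back into joint continuity of $\phi$. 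Your remark that local compactness is used only in step~(c), and that replacing $\CC(Z)$ by $C_p(Z)$ would collapse the argument to separate continuity, is exactly the right diagnosis of where the hypothesis enters.
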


Now we recall other classes of topological spaces used in the article.
Let $X$ be a space. Recall that a subspace $Y$ of $X$ is {\em sequentially dense} if for each point $x\in X$ there is a sequence $\{y_n\}_{n\in\w}\subseteq Y$ converging to $x$. The space $X$ is called
\begin{enumerate}
\item[$\bullet$] {\em Fr\'{e}chet--Urysohn} if each subspace $Y$ of $X$ is sequentially closed in its closure $\overline{Y}$;
\item[$\bullet$] {\em sequential} if for each non-closed subset $A\subseteq X$ there is a sequence $\{a_n\}_{n\in\NN}\subseteq A$ converging to some point $a\in \bar A\setminus A$;
\item[$\bullet$] {\em $\kappa$-Fr\'{e}chet--Urysohn} if for every open subset $U$ of $X$ and every $x\in \overline{U}$, there exists a sequence $\{x_n\}_{n\in\w} \subseteq U$ converging to $x$;
\item[$\bullet$] a {\em sequentially Ascoli space} if every convergent sequence in $\CC(X)$ is equicontinuous.
\end{enumerate}
The notion of  $\kappa$-Fr\'{e}chet--Urysohn spaces was introduced by Arhangel'skii (see for example \cite{Gabr-B1}), and sequentially Ascoli spaces were defined in \cite{Gabr:weak-bar-L(X)}.
The relationships between the aforementioned notions are described in the next diagram
\[
\xymatrix{
& \mbox{$\kappa$-FU} \ar@{=>}[rr]&& \mbox{Ascoli}  \ar@{=>}[r] & \mbox{sequentially Ascoli}  \\
\mbox{FU} \ar@{=>}[r]\ar@{=>}[ru] & \mbox{sequential} \ar@{=>}[r]\ar@{=>}[rd] &  \mbox{$k$-space} \ar@{=>}[r] &  \mbox{$k_\IR$-space} \ar@{=>}[u]&\\
&& \mbox{$s_\IR$-space} \ar@{=>}[ru] && }
\]
Note that, by the main result of \cite{GGKZ-2}, the space $C_p(\w_1)$ is $\kappa$-Fr\'{e}chet--Urysohn  which is not a $k_\IR$-space.



\section{Universally $\si$-measurable and universal $\si$-measure zero spaces} \label{sec:px}


For a compact space $K$, we denote  by $C(K)$ the Banach lattice of all continuous real-valued functions on $K$ endowed with the $\sup$-norm, and let
\[
P(K) = \{\mu\in C(K)': \|\mu\|=1, \mu\geq  0\}
\]
be the positive unit sphere of the dual Banach space $C(K)'$, endowed with the weak$^\ast$ topology. By the fundamental Riesz Theorem, the functionals $\mu \in P (K)$ can be identified with Borel regular probability measures on $K$, see \cite[1.2]{Fedorchuk1991}. Every continuous function $f: X \to Y$ between compact spaces induces a map $P (f): P(X) \to P(Y)$ defined by
\[
P(f)(\mu)(g) := \mu(g \circ f )\; \mbox{ for }\; g \in C(Y).
\]

Let $X$ be a space. A countably additive measure $\mu$ on the $\si$-algebra $\cB(X)$ of Borel subsets of $X$ is called a {\em Borel} measure.
A measure $\mu$ is called {\em continuous} if $\mu(\{x\})=0$ for every $x\in X$.
A measure $\mu$ is called {\em discrete} if $|\mu|(C)=1$ for some countable $C\subseteq X$. For $x\in X$, we denote by $\delta_x$ the Dirac measure at $x$.
The set of all probability discrete measures on $X$ is denoted by $P_d(X)$. Each $\mu\in P_d(X)$ can be uniquely represented in the form $\mu=\sum_{n=1}^\infty \lambda_n \delta_{x_n}$, where $x_n\in X$ are distinct, $\lambda_n> 0$ and $\sum_{n=1}^\infty \lambda_n=1$.

We denote by $P_b(X)$ the family of all probability Borel $\sigma$-additive measures on a space $X$. A measure $\mu\in P_b(X)$ is called {\em Radon} if for every Borel set $B\subset X$ and for every $\e>0$, there exists a compact $K\subseteq B$ such that $\mu(B\setminus K)<\e$. The set of all probability Radon measures on $X$ is denoted by $P(X)$. A measure $\mu$ is called {\em $\tau$-additive} if
\[
\mu\big(\bigcup\cU\big) = \sup \big\{ \mu\big(\bigcup\cV\big) : \cV\subset \cU,\ |\cV|<\w \big\}
\]
for any family $\cU$ of open subsets of $X$. The set of all probability $\tau$-additive measures on $X$ is denoted by $P_\tau(X)$.


Following \cite{BanakhChigogidzeFedorchuk2003}, we consider the aforementioned sets of probability measures as subspaces of $P(\beta X)$:
\begin{enumerate}
\item[$\bullet$] $P_d(X)=\{\mu\in P(\beta X): \mu(A)=1$ for some $A\subseteq X, |A|\leq\om \}$;
\item[$\bullet$] $P(X)=\{\mu\in P(\beta X): \mu(A)=1$ for some $\si$-compact $A\subseteq X\}$;
\item[$\bullet$] $P_\tau(X)=\{ \mu\in P(\beta X): \mu(K)=0 $ for every compact $K\subseteq \bt X\setminus X\}$;
\item[$\bullet$] $P_\si(X)=\{ \mu\in P(\beta X): \mu(K)=0 $ for every compact $K\subseteq \bt X\setminus X$ of  $G_\delta$ type$\}$.
\end{enumerate}
Then
\[
P_d(X) \subseteq P(X)  \subseteq P_\tau(X)  \subseteq P_\sigma(X)  \subseteq P(\beta X).
\]
By the general Riesz theorem,  the elements of $P(\beta X)$ can be identified with finitely additive regular probability measures defined on the algebra of Baire subsets of $X$ (i.e., the smallest algebra generated by all functionally closed subsets of $X$). Under this identification, the elements of $P_\sigma(X)$ correspond to $\sigma$-additive probability measures  defined
on Baire subsets of $X$ (see  \cite[Section 1]{Fedorchuk1991}).


\begin{proposition} \label{p:measure-pns}
If $X$ is a perfectly normal space, then $P_b(X)=P_\sigma(X)$.
\end{proposition}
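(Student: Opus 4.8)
The plan is to reduce the proposition to the identification, recalled in the paragraph preceding the statement, of $P_\sigma(X)$ with the collection of $\sigma$-additive probability measures on the algebra of Baire subsets of $X$, and then to observe that for a perfectly normal space the Baire and Borel $\sigma$-algebras coincide. Concretely, write $\mathrm{Ba}(X)$ for the $\sigma$-algebra generated by the functionally closed (equivalently, the zero) subsets of $X$ and $\mathrm{Bo}(X)$ for the Borel $\sigma$-algebra. Since every zero set is closed, $\mathrm{Ba}(X)\subseteq\mathrm{Bo}(X)$ holds for every space. For the reverse inclusion I would invoke the characterization of perfect normality recalled in Section~\ref{sec:preliminary}: a space is perfectly normal if and only if each of its closed subsets is a zero set. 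Hence, when $X$ is perfectly normal, every closed subset of $X$ lies in $\mathrm{Ba}(X)$, and since $\mathrm{Bo}(X)$ is the $\sigma$-algebra generated by the closed sets, $\mathrm{Bo}(X)\subseteq\mathrm{Ba}(X)$. Thus $\mathrm{Ba}(X)=\mathrm{Bo}(X)$.

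It then remains to match up the two families of measures. By the Riesz-type identification quoted above, $P_\sigma(X)$ is exactly the set of $\sigma$-additive probability measures on the Baire algebra of $X$; each of these extends uniquely (Carath\'eodory/Hahn--Kolmogorov) to a $\sigma$-additive probability measure on $\mathrm{Ba}(X)$, and conversely every $\sigma$-additive probability measure on $\mathrm{Ba}(X)$ restricts to such a measure, the regularity demanded by the Riesz identification being automatic for finite Baire measures. On the other hand, $P_b(X)$ is by definition the family of $\sigma$-additive probability measures on $\mathrm{Bo}(X)$. By the previous paragraph these two families literally coincide, so the Riesz correspondence restricts to a bijection $P_\sigma(X)\to P_b(X)$; since both sides carry the topology of weak convergence against bounded continuous functions (which on $P_\sigma(X)$ is the subspace topology inherited from $P(\beta X)$), this bijection is a homeomorphism, giving $P_b(X)=P_\sigma(X)$.

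I do not expect any real obstacle here: the whole content is the set equality $\mathrm{Ba}(X)=\mathrm{Bo}(X)$, which is immediate from the stated description of perfect normality. The only point that deserves a line of justification is that passing between a Borel measure on $X$ and the corresponding Baire measure loses no information and stays inside $P(\beta X)$; in the perfectly normal case this is automatic, since no genuine restriction takes place and, in addition, every finite Borel measure on a perfectly normal space is regular --- the collection of sets on which such a measure is inner and outer regular is a $\sigma$-algebra that contains every closed set (outer regularity at a closed set $F$ following from writing $F=\bigcap_n U_n$ with $U_n$ open and decreasing, which is possible by perfect normality).
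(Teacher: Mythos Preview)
Your proposal is correct and follows the same approach as the paper: the key observation is that in a perfectly normal space every closed set is a zero set, so the Baire and Borel ($\sigma$-)algebras coincide, whence the two families of measures are identified. The paper's proof says exactly this in two lines; the extra care you take with the Carath\'eodory extension, automatic regularity, and the topological identification is sound but goes beyond what the paper spells out.
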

\begin{proof}
Since in a perfectly normal space every closed set is a zero-set, the algebra of Baire subsets of $X$ coincides with the algebra of Borel subsets of $X$.
Thus, $P_b(X)=P_\sigma(X)$.
\end{proof}

Let $X$ be a subspace of a space $Y$. The family $P_b(X)$ can be naturally considered as a subspace of $P_b(Y)$: a measure $\mu\in P_b(X)$ corresponds to the measure $\tilde\mu\in P_b(Y)$ such that $\tilde\mu(B)=\mu(B\cap Y)$ for $B\in\cB(Y)$. We shall always identify $P_b(X)$ with its image in $P_b(Y)$ and write $P_b(X)\subseteq P_b(Y)$.

Let $A$ be a subset of a space $X$, and let $\mu\in P_b(X)$. The {\em external} and the {\em internal} measure of $A$ are defined as follows
\begin{align*}
\mu^\ast(A) &= \inf\{\mu(B): A\subseteq B\in \cB(X)\},
&
\mu_\ast(A) &= \sup\{\mu(B): A\supseteq B\in \cB(X)\}.
\end{align*}
A subset $M$ of $X$ is {\em $\mu$-measurable} if $\mu_\ast(M)=\mu^\ast(M)$. A subset $M$ of $X$ is called {\em universally measurable} if $M$ is $\mu$-measurable for any $\mu\in P_\sigma(X)$. A subset $A$ is called {\em universal measure zero} if $\mu^\ast(A)=0$ for any continuous $\mu\in P_b(X)$.




Let $X$ and $Y$ be spaces, and let  $f: X \to Y$  be a continuous mapping. Denote by $\beta f: \beta X\to \beta Y$ the unique extension of $f$. Then, by \cite{BanakhChigogidzeFedorchuk2003}, we have the following inclusions
\begin{equation} \label{equ:measure-1}
\begin{aligned}
P(\beta f)\big(P_d(X)\big)\subseteq P_d(Y),& \quad P(\beta f)\big(P(X)\big)\subseteq P(Y), \\
P(\beta f)\big(P_\tau(X)\big)\subseteq P_\tau(Y),& \quad P(\beta f)\big(P_\sigma(X)\big)\subseteq P_\sigma(Y).
\end{aligned}
\end{equation}

We shall say that a space $X$ is {\em universally $\sigma$-measurable} if $P_\sigma(X)=P(X)$. Such spaces in \cite{BanakhChigogidzeFedorchuk2003} were called universally measurable. The last inclusion in (\ref{equ:measure-1}) and the equality $P_\sigma(Y)=P(Y)$ imply the next assertion.

\begin{proposition}\label{p:px:ci}
Let $X$ be a space, $Y$ be a universally $\sigma$-measurable space, and let $f: X \to Y$ be a continuous map. If $\mu\in P_\sigma(X)$, then $P(\beta f)(\mu)\in P(Y)$.
\end{proposition}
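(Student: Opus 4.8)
The plan is to read this off directly from the functoriality statement \eqref{equ:measure-1} together with the definition of a universally $\sigma$-measurable space. First I would recall that, having fixed the continuous extension $\beta f:\beta X\to\beta Y$, the induced map $P(\beta f):P(\beta X)\to P(\beta Y)$ is well defined and weak$^\ast$ continuous, and that $P_\sigma(X)$, $P(X)$, etc.\ have been introduced precisely as subspaces of $P(\beta X)$; so it makes sense to evaluate $P(\beta f)$ on an element $\mu\in P_\sigma(X)$.

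Next I would invoke the last inclusion in \eqref{equ:measure-1}, namely $P(\beta f)\big(P_\sigma(X)\big)\subseteq P_\sigma(Y)$, to conclude that $P(\beta f)(\mu)\in P_\sigma(Y)$. Finally, since $Y$ is universally $\sigma$-measurable we have, by definition, $P_\sigma(Y)=P(Y)$, and therefore $P(\beta f)(\mu)\in P(Y)$, as required. There is no genuine obstacle here: the statement is an immediate combination of the cited inclusion from \cite{BanakhChigogidzeFedorchuk2003} with the definition of universal $\sigma$-measurability, and the only point worth a line of justification is that the various measure spaces are being regarded consistently as subspaces of $P(\beta X)$ and $P(\beta Y)$ so that $P(\beta f)$ restricts appropriately.
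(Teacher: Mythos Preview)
Your proposal is correct and follows exactly the paper's approach: the paper states the proposition as an immediate consequence of the last inclusion in \eqref{equ:measure-1}, $P(\beta f)\big(P_\sigma(X)\big)\subseteq P_\sigma(Y)$, together with the defining equality $P_\sigma(Y)=P(Y)$ for universally $\sigma$-measurable $Y$. There is nothing to add.
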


Let us call the space $X$ to be {\em universal $\si$-measure zero} if $P_\sigma(X)=P_d(X)$. Clearly, any universal $\si$-measure zero space is universally $\si$-measurable. By Theorem 1 of \cite{Knowles1967}, all Radon measures on a compact space $X$ are discrete if, and only if, $X$ is scattered. Therefore, the following statement holds.

\begin{proposition}\label{p:px:sccs}
A compact space $X$ is universal $\sigma$-measure zero if, and only if, $X$ is scattered.
\end{proposition}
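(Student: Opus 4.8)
The plan is to reduce the claim to the compact-space result of Knowles cited just above. Recall the statement: a compact space $X$ is universal $\sigma$-measure zero (i.e.\ $P_\sigma(X)=P_d(X)$) if and only if $X$ is scattered. For a compact space, $\beta X = X$, so $P_\sigma(X)=P(X)=P_b(X)=P(X)$ is exactly the space of Radon probability measures on $X$, and $P_d(X)$ is the set of discrete (purely atomic, countably supported) ones. Thus the equality $P_\sigma(X)=P_d(X)$ says precisely that \emph{every} Radon probability measure on $X$ is discrete. By Theorem~1 of \cite{Knowles1967}, as quoted, this happens exactly when $X$ is scattered. So the proof is essentially a one-line translation: unravel the definitions so that ``universal $\sigma$-measure zero'' for a compact space literally becomes ``all Radon measures are discrete'', and then invoke Knowles.

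Concretely I would write: Let $X$ be compact. Then $P_\sigma(X)=P(X)$ (the first inclusion chain $P_d(X)\subseteq P(X)\subseteq P_\tau(X)\subseteq P_\sigma(X)\subseteq P(\beta X)=P(X)$ collapses since $\beta X=X$ and every point of $\beta X\setminus X=\es$ vacuously satisfies the defining conditions), and elements of $P(X)$ are, via the Riesz representation theorem, exactly the Borel regular probability (equivalently Radon, by compactness) measures on $X$. Under this identification $P_d(X)$ corresponds to the discrete measures, i.e.\ those $\mu$ with $|\mu|(C)=1$ for some countable $C\subseteq X$. Hence $X$ is universal $\sigma$-measure zero $\iff$ $P_\sigma(X)=P_d(X)$ $\iff$ every Radon probability measure on $X$ is discrete $\iff$ ($X$ is scattered) by Theorem~1 of \cite{Knowles1967}.

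There is no real obstacle here; the only thing to be a little careful about is the bookkeeping that for compact $X$ the four measure spaces $P_d(X)\subseteq P(X)\subseteq P_\tau(X)\subseteq P_\sigma(X)$ sit inside $P(\beta X)=P(X)$ and that the outermost three coincide, together with confirming that ``discrete'' in the sense of $P_d$ (a measure concentrated on a countable set) matches ``purely atomic / all Radon measures discrete'' in the sense of Knowles's theorem. Both are immediate from the definitions given in the section. So I expect the proof to be two or three sentences, consisting of this identification followed by a citation of \cite{Knowles1967}.
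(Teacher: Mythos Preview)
Your proposal is correct and matches the paper's approach exactly: the paper simply observes (immediately before the proposition) that by Knowles's theorem all Radon measures on a compact space are discrete iff the space is scattered, and then states the proposition as a direct consequence. Your additional remark that $P_\sigma(X)=P(X)$ for compact $X$ (since $\beta X=X$) is the only bookkeeping needed, and it is implicit in the paper's one-line justification.
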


Now we characterize universal $\sigma$-measure zero spaces.
\begin{theorem}\label{t:px:sccs}
A space $X$ is universal $\sigma$-measure zero if, and only if, $X$ is universally $\sigma$-measurable and any compact subspace of $X$ is scattered.
\end{theorem}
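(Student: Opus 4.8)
The plan is to prove both implications, using Proposition \ref{p:px:sccs} (compact scattered $\Leftrightarrow$ all Radon measures discrete) as the main tool for transferring between the global statement and the behaviour on compact subspaces.

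For the easier direction, suppose $X$ is universal $\sigma$-measure zero, i.e. $P_\sigma(X)=P_d(X)$. Since $P_d(X)\subseteq P(X)\subseteq P_\sigma(X)$, we immediately get $P(X)=P_\sigma(X)$, so $X$ is universally $\sigma$-measurable. To see that every compact $K\subseteq X$ is scattered, first observe that $K$ is closed in $\beta X$, and any $\mu\in P(K)$ can be regarded as a Radon measure on $X$ concentrated on $K$, hence as an element of $P(X)=P_d(X)$; thus $\mu$ is discrete. By Proposition \ref{p:px:sccs} this forces $K$ to be scattered. (One should be a little careful here that a discrete measure on $X$ concentrated on the compact set $K$ is indeed discrete as a measure on $K$, but this is clear from the representation $\mu=\sum_n\lambda_n\delta_{x_n}$.)

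For the converse, assume $X$ is universally $\sigma$-measurable (so $P_\sigma(X)=P(X)$) and every compact subspace of $X$ is scattered; we must show $P(X)=P_d(X)$, i.e. every $\mu\in P_\sigma(X)=P(X)$ is discrete. Let $\mu\in P(X)$. By definition of $P(X)$ as a subspace of $P(\beta X)$, there is a $\sigma$-compact $A=\bigcup_{n}K_n\subseteq X$ (with $K_n$ compact, WLOG increasing) such that $\mu(A)=1$. The idea is to split $\mu$ into its discrete part $\mu_d$ and its continuous part $\mu_c$, so $\mu=\alpha\mu_d+(1-\alpha)\mu_c$ with $\mu_c$ continuous; it suffices to show $\mu_c=0$, i.e. $1-\alpha=0$. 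Now $\mu_c$ is again a Radon measure concentrated on $A$, so $\mu_c(K_n)\to \mu_c(A)=\|\mu_c\|$; hence if $\mu_c\neq 0$ there is a compact $K=K_n$ with $\mu_c(K)>0$, and then $\nu:=\mu_c\rst{K}/\mu_c(K)$ is a Radon probability measure on the compact scattered space $K$, which by Proposition \ref{p:px:sccs} must be discrete — contradicting that $\nu$ is continuous (being a normalized restriction of the continuous measure $\mu_c$; here one uses that $K$ is closed so restricting does not create atoms). Therefore $\mu_c=0$ and $\mu=\mu_d\in P_d(X)$.

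The main obstacle I expect is the bookkeeping around the decomposition $\mu=\alpha\mu_d+(1-\alpha)\mu_c$ and the claim that the continuous part remains Radon and concentrated on the same $\sigma$-compact set: one needs that a Radon measure on $X$ admits such a decomposition with both parts Radon, which follows from the fact that the atoms of a Radon measure form an at most countable set and each atom is automatically $\mu$-measurable. A secondary subtlety is making sure the restriction of a continuous Radon measure to a closed (compact) subset is still continuous — this is immediate but should be stated. Everything else is a routine application of Proposition \ref{p:px:sccs} and the definitions of $P_d$, $P$, and $P_\sigma$ recalled above.
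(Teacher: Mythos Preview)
Your proposal is correct. The forward direction is identical to the paper's.

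For the converse, the paper avoids the discrete/continuous decomposition entirely: once $\mu\in P_\sigma(X)=P(X)$ is concentrated on a $\sigma$-compact set $S=\bigcup_n S_n$, the paper simply observes that each restriction $\mu{\restriction}_{S_n}$ is a Radon measure on the scattered compact $S_n$, hence discrete by Proposition~\ref{p:px:sccs}; since $\mu$ lives on $\bigcup_n S_n$, it is itself discrete. This sidesteps the bookkeeping you flagged (that $\mu_c$ is Radon, concentrated on the same $\sigma$-compact set, and that restriction to a closed set preserves continuity). Your contradiction argument via the continuous part is perfectly valid and standard measure-theoretic reasoning, but the paper's route is a touch more economical: rather than isolating a hypothetical continuous part and deriving a contradiction on a single compact piece, it just reads off discreteness piece by piece and sums up.
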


\begin{proof}
$(\Ra)$ Since $X$ is universal $\sigma$-measure zero, the space $X$ is universally $\sigma$-measurable. Let $K\subseteq X$ be compact. Since $P(K)\subseteq P(X)=P_d(X)$, then $P(K)=P_d(K)$ and $K$ is universal $\sigma$-measure zero. It follows from Proposition \ref{p:px:sccs} that $K$ is scattered.

$(\La)$ Let $\mu\in P_\sigma(X)$. Since $X$ is universally $\sigma$-measurable, we have $\mu\in P(X)$. Then $\mu(S)=1$ for some $\sigma$-compact $S\subseteq X$. Let $S=\bigcup_{i=1}^\infty S_n$, where each $S_n$ is compact. Since $S_n$ is scattered, then, by Proposition \ref{p:px:sccs}, $S_n$ is universal $\si$-measure zero. Therefore, the measure $\mu{\restriction}_{S_n}$ is discrete. Then the measure $\mu$ is discrete.
\end{proof}

Recall (see \cite{Schwartz1973}) that a space $X$ is called {\em Radon} if $P_b(X)=P(X)$. Following \cite[439B]{Fremlin-IV}, a space $X$ is called {\em universally negligible} if $P_b(X)=P_d(X)$.

\begin{proposition}\label{p:px:2}
Let $X$ be a perfectly normal space. Then
\begin{enumerate}
\item[{\rm(i)}] $X$ is a universally $\sigma$-measurable space if, and only if, $X$ is Radon;
\item[{\rm(ii)}] $X$ is a universal $\sigma$-measure zero space if, and only if, $X$ is universally negligible.
\end{enumerate}
\end{proposition}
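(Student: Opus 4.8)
The plan is to reduce both equivalences to Proposition \ref{p:measure-pns}, after which each statement becomes a mere comparison of definitions. First I would recall that in a perfectly normal space every closed set is a zero set, so the algebra of Baire subsets of $X$ coincides with the $\sigma$-algebra of Borel subsets of $X$; consequently, by Proposition \ref{p:measure-pns}, $P_b(X)=P_\sigma(X)$. This single identity is the only nontrivial ingredient, and with it in hand both (i) and (ii) are equalities among the nested families $P_d(X)\subseteq P(X)\subseteq P_\tau(X)\subseteq P_\sigma(X)$.

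For (i): by definition $X$ is universally $\sigma$-measurable precisely when $P_\sigma(X)=P(X)$, while $X$ is Radon precisely when $P_b(X)=P(X)$; since $P_b(X)=P_\sigma(X)$ for perfectly normal $X$, these two conditions are literally the same. For (ii): likewise $X$ is universal $\sigma$-measure zero precisely when $P_\sigma(X)=P_d(X)$, and $X$ is universally negligible precisely when $P_b(X)=P_d(X)$; again $P_b(X)=P_\sigma(X)$ identifies the two. Thus both parts follow immediately.

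The only point that deserves a moment's care — and it is bookkeeping rather than a genuine obstacle — is that the families $P_d(X)$, $P(X)$, $P_\tau(X)$, $P_\sigma(X)$ have been realized in Section \ref{sec:px} as subspaces of $P(\beta X)$ (equivalently, as $\sigma$-additive Baire probability measures on $X$), whereas $P_b(X)$ was introduced intrinsically as the Borel $\sigma$-additive probability measures on $X$. One should therefore make explicit that, under the canonical identification recalled after the inclusion chain $P_d(X)\subseteq P(X)\subseteq P_\tau(X)\subseteq P_\sigma(X)\subseteq P(\beta X)$, a Borel $\sigma$-additive probability measure on a perfectly normal $X$ is exactly a $\sigma$-additive Baire measure on $X$ — which is what Proposition \ref{p:measure-pns} asserts — so that the four classes $P_d$, $P$, $P_\tau$, $P_\sigma$ indeed all sit inside $P_b(X)$ as subsets, and the equalities claimed in (i) and (ii) make sense on the nose. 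Once this is stated, no further argument is needed.
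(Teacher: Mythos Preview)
Your proof is correct. For part (i) it is exactly the paper's argument: both of you simply invoke Proposition \ref{p:measure-pns} and compare definitions.

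For part (ii), however, your route is genuinely different from --- and more direct than --- the paper's. You argue straight from the definitions: since $P_b(X)=P_\sigma(X)$ by Proposition \ref{p:measure-pns}, the condition $P_\sigma(X)=P_d(X)$ (universal $\sigma$-measure zero) is literally the same as $P_b(X)=P_d(X)$ (universally negligible). The paper instead takes a detour: it invokes Theorem \ref{t:px:sccs} (universal $\sigma$-measure zero $\Leftrightarrow$ universally $\sigma$-measurable and every compact subspace is scattered), the external fact \cite[439C]{Fremlin-IV} (universally negligible $\Leftrightarrow$ Radon and every compact subspace is scattered), and then part (i) to match the two characterizations. Your argument avoids both Theorem \ref{t:px:sccs} and the external reference, at the cost of nothing; the paper's route has the minor side benefit of displaying the parallel structure of the two ``scattered compacts'' characterizations, but for the bare equivalence your approach is cleaner.
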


\begin{proof}
(i) follows from Proposition \ref{p:measure-pns}.

(ii) follows from (i), Theorem \ref{t:px:sccs}, and the following fact \cite[439C]{Fremlin-IV}:  a space $X$ is universally negligible if, and only if, $X$ is Radon and any compact subspace of $X$ is scattered.
\end{proof}

By Theorem 10 of   \cite[p.~122]{Schwartz1973}, every Souslin space is Radon. This result and  Proposition \ref{p:px:2} imply the following assertion.

\begin{proposition}\label{p:px:3}
Any Souslin space is universally $\sigma$-measurable.
\end{proposition}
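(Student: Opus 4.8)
\textbf{Proof proposal for Proposition~\ref{p:px:3}.}
The plan is to combine the two results quoted immediately before the statement: Schwartz's theorem that every Souslin space is Radon, and Proposition~\ref{p:px:2}(i), which characterizes universal $\sigma$-measurability in the presence of perfect normality. So the first and only substantive point to check is that the hypotheses of Proposition~\ref{p:px:2}(i) are met, namely that a Souslin space is perfectly normal. But this is already recorded in the Preliminary section: it is stated there (with reference) that any Souslin space is perfectly normal. Hence I would simply invoke that fact.

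Carrying this out: let $X$ be a Souslin space. By the remark in Section~\ref{sec:preliminary}, $X$ is perfectly normal. By Theorem~10 of \cite[p.~122]{Schwartz1973} (quoted above), $X$ is Radon, i.e.\ $P_b(X)=P(X)$. Now apply Proposition~\ref{p:px:2}(i): since $X$ is perfectly normal, $X$ is universally $\sigma$-measurable if and only if $X$ is Radon. The right-hand side holds, so $X$ is universally $\sigma$-measurable, which is the claim.

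There is essentially no obstacle here; the proposition is a two-line corollary of results already in place. The only thing one must be slightly careful about is the chain of definitions: ``universally $\sigma$-measurable'' means $P_\sigma(X)=P(X)$, and Proposition~\ref{p:px:2}(i) translates this (under perfect normality, via Proposition~\ref{p:measure-pns}, which gives $P_b(X)=P_\sigma(X)$) into the Radon condition $P_b(X)=P(X)$. So unwinding the definitions, the argument is: perfect normality gives $P_b(X)=P_\sigma(X)$; Souslin (hence Radon) gives $P_b(X)=P(X)$; therefore $P_\sigma(X)=P(X)$, as required. I would write the proof in the compressed ``This result and Proposition~\ref{p:px:2} imply\ldots'' style already used for the surrounding propositions, perhaps expanding it to one or two sentences spelling out that Souslin spaces are perfectly normal and then Radon.
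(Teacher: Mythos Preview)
Your proposal is correct and matches the paper's own argument exactly: the paper derives the proposition from Schwartz's theorem (Souslin $\Rightarrow$ Radon) together with Proposition~\ref{p:px:2}, with perfect normality of Souslin spaces (recorded in Section~\ref{sec:preliminary}) supplying the hypothesis needed to invoke the latter. Your unwinding of the chain $P_\sigma(X)=P_b(X)=P(X)$ is precisely the content of that combination.
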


\begin{theorem}\label{t:px:polish}
Let $M$ be a subspace of  a Polish space $X$. Then:
\begin{enumerate}
\item[{\rm(i)}] $M$ is a universally measurable set in $X$ if, and only if, $M$ is a universally $\sigma$-measurable space;
\item[{\rm(ii)}] $M$ is a universal measure zero set in $X$ if, and only if, $M$ is a universal $\sigma$-measure zero space.
\end{enumerate}
\end{theorem}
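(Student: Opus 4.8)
The plan is to reduce both statements to the abstract characterizations of universally $\sigma$-measurable and universal $\sigma$-measure zero spaces already established, namely Proposition \ref{p:px:2} and Theorem \ref{t:px:sccs}, using the fact that a subspace $M$ of a Polish space $X$ is itself a separable metrizable, hence perfectly normal, space; so by Proposition \ref{p:px:2} it is universally $\sigma$-measurable exactly when it is Radon, and universal $\sigma$-measure zero exactly when it is universally negligible. Thus it suffices to prove: (i) $M$ is universally measurable as a subset of $X$ iff $M$ is Radon; and (ii) $M$ is universal measure zero as a subset of $X$ iff $M$ is universally negligible.

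For (i), first I would observe that since $X$ is Polish, every $\sigma$-additive Borel probability measure on $X$ is Radon (Polish spaces are Radon spaces), and every $\sigma$-additive Borel probability measure on the subspace $M$ extends, via the identification $P_b(M)\subseteq P_b(X)$ described before Proposition \ref{p:measure-pns}, to a Borel measure on $X$ concentrated on $M$ in the sense of outer measure. The forward direction: if $M$ is universally measurable in $X$, take any $\nu\in P_b(M)$; its extension $\tilde\nu\in P_b(X)$ is Radon on $X$; since $M$ is $\tilde\nu$-measurable with $\tilde\nu^\ast(M)=1$, inner regularity of $\tilde\nu$ by compact sets restricts to inner regularity of $\nu$ by compact subsets of $M$, so $\nu\in P(M)$, i.e. $M$ is Radon. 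Conversely, if $M$ is Radon, let $\mu\in P_b(X)$ be arbitrary; I want to show $M$ is $\mu$-measurable. Using that $X$ is Polish, I would take a Borel hull: choose Borel sets $B_1\subseteq M\subseteq B_2$ in $X$ with $\mu(B_2\setminus B_1)$ approximating $\mu^\ast(M)-\mu_\ast(M)$; the set $B_2\setminus B_1$ is a Borel subset of $X$ disjoint from... rather, I would argue via the restriction $\mu^\ast\!\restriction_M$, which defines a Borel measure on $M$; since $M$ is Radon this measure is inner regular by compact subsets of $M$, which are also compact (hence Borel) in $X$, forcing $\mu_\ast(M)=\mu^\ast(M)$. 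This is the step that needs care.

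For (ii), I would combine (i) with the scatteredness criterion. If $M$ is universal measure zero in $X$, then every continuous (atomless) $\sigma$-additive Borel probability measure on $X$ gives $M$ outer measure zero; in particular $M$ contains no subset homeomorphic to the Cantor set carrying a copy of the standard measure, so every compact subspace $K\subseteq M$ must be scattered (a non-scattered compact metric space carries an atomless Radon measure, by Proposition \ref{p:px:sccs} applied to $K$, which would then push forward to an atomless measure on $X$ giving $M$ positive outer measure — contradiction). Moreover $M$, being universal measure zero, is in particular universally measurable in $X$, hence Radon by (i), hence universally $\sigma$-measurable; combined with "every compact subspace scattered" this gives, by Theorem \ref{t:px:sccs}, that $M$ is universal $\sigma$-measure zero, equivalently universally negligible by Proposition \ref{p:px:2}(ii). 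Conversely, if $M$ is universal $\sigma$-measure zero, it is universally $\sigma$-measurable with all compact subspaces scattered; by (i) it is universally measurable in $X$, and for any atomless $\mu\in P_b(X)$, the restriction $\mu\!\restriction_M$ is a continuous measure in $P_b(M)=P_d(M)$, which is impossible unless $\mu^\ast(M)=0$; hence $M$ is universal measure zero in $X$.

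The main obstacle I anticipate is the converse direction of (i): carefully transferring inner compact regularity from measures on the subspace $M$ back to $\mu$-measurability of $M$ inside $X$ for an arbitrary (a priori merely $\sigma$-additive) $\mu\in P_b(X)$. The cleanest route is to exploit that $X$ is Polish so that $\mu$ is automatically Radon on $X$, reduce to a Borel hull $B\supseteq M$ with $\mu(B)=\mu^\ast(M)$, equip $B$ with the trace measure, and use that $M$ is Radon as a space to extract, inside $M$, compact sets of measure arbitrarily close to $\mu^\ast(M)$ — these are compact in $X$ and lie in $M$, yielding $\mu_\ast(M)\ge\mu^\ast(M)$. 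All the remaining bookkeeping (identifications $P_b(M)\subseteq P_b(X)$, equivalence of Borel and Baire $\sigma$-algebras in the metrizable setting via Proposition \ref{p:measure-pns}, pushforward of atomless measures) is routine.
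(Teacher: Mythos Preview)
Your reduction via Proposition \ref{p:px:2} to the equivalences ``$M$ universally measurable in $X$ $\Leftrightarrow$ $M$ Radon'' and ``$M$ universal measure zero in $X$ $\Leftrightarrow$ $M$ universally negligible'' is exactly the paper's strategy. The difference is that the paper dispatches both equivalences by citation: for (i) it invokes Propositions 8 and 9 of \cite[p.~118]{Schwartz1973}, and for (ii) it appeals to Characterization 1.2.1 of \cite{Nishiura2008} (universal measure zero in a Polish space $\Leftrightarrow$ universally measurable with all compact subsets countable), combined with (i) and Theorem \ref{t:px:sccs}. You instead sketch these equivalences by hand via the trace-measure construction and the scatteredness criterion. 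Your arguments are sound (the converse of (i) does go through as you outline in the final paragraph: normalize the trace outer measure on $M$, use Radonness of $M$ to extract compacts $K\subseteq M$ with $\mu^\ast(K)=\mu(K)$ close to $\mu^\ast(M)$, forcing $\mu_\ast(M)\ge\mu^\ast(M)$; and in (ii), a universal measure zero set is automatically universally measurable by splitting any $\mu$ into its atomic and atomless parts). Your route is more self-contained; the paper's is shorter but outsources the real work.
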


\begin{proof}
Recall that, by Theorem 10 of   \cite[p.~122]{Schwartz1973}, each Polish space is Radon.

(i) The set $M$ is  universally measurable in $X$ if, and only if, $M$ is a Radon space (see Propositions 8 and 9 in \cite[p.~118]{Schwartz1973}). Now Proposition \ref{p:px:2}(i) applies.

(ii) From (i) and Proposition \ref{p:px:2} it follows that $M$ is a universal $\sigma$-measure zero space if, and only if, $X$ is a universally $\sigma$-measurable set in $X$ and any compact subspace of $M$ is at most countable being scattered. By 1.2.1. Characterization of  \cite{Nishiura2008}, the latter property is equivalent to $M$ of being a universal measure zero set in $X$.
\end{proof}

\begin{theorem}\label{t:px:ums-realcompact}
If $X$ a universally $\sigma$-measurable space, then $P(X)$ and $X$ are realcompact spaces.
\end{theorem}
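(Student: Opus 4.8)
The plan is to show first that $P(X)$ is realcompact and then deduce that $X$ is realcompact by realizing $X$ as a closed subspace of $P(X)$ (via the map $x\mapsto\delta_x$). For the first part, recall that $P(\beta X)=P(K)$ for the compact space $K=\beta X$ is a compact space (it is weak$^\ast$ closed and bounded in $C(\beta X)'$), hence realcompact. Since $X$ is universally $\sigma$-measurable, by definition $P_\sigma(X)=P(X)$, so it suffices to show that $P_\sigma(X)$ is a realcompact subspace of the compact space $P(\beta X)$. The description $P_\sigma(X)=\{\mu\in P(\beta X):\mu(K)=0\text{ for every compact }G_\delta\text{ subset }K\subseteq\beta X\setminus X\}$ exhibits $P_\sigma(X)$ as an intersection: for each compact $G_\delta$ set $K\subseteq\beta X\setminus X$ the set $\{\mu\in P(\beta X):\mu(K)=0\}$ is of type $G_\delta$ in $P(\beta X)$ (a compact $G_\delta$ set $K=\bigcap_n U_n$ with $U_n$ open gives $\mu(K)=\inf_n\mu(U_n)$, and $\{\mu:\mu(U_n)<1/k\}$ is open since $\mu\mapsto\mu(U)$ is lower semicontinuous on $P(\beta X)$ for $U$ open). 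Thus $P_\sigma(X)$ is an intersection of $G_\delta$ subsets of the compact space $P(\beta X)$; each such $G_\delta$ set is realcompact (a $G_\delta$ subset of a compact, hence realcompact, space is realcompact), but an arbitrary intersection of realcompact subspaces of a fixed space is realcompact, so $P(X)=P_\sigma(X)$ is realcompact.

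Next I would show that $x\mapsto\delta_x$ embeds $X$ as a closed subspace of $P(X)$. That this map is an embedding onto the set of Dirac measures is standard (Tychonoff separation gives that the weak$^\ast$ topology on $\{\delta_x:x\in X\}$ agrees with the topology of $X$). For closedness: the Dirac measures are exactly the extreme points of $P(\beta X)$ that are supported on $X$; more concretely, $\{\delta_x:x\in\beta X\}$ is closed in $P(\beta X)$ (it is the continuous image of the compact $\beta X$, and the map $x\mapsto\delta_x$ is injective on the Hausdorff space $\beta X$, hence a homeomorphism onto a compact, thus closed, subset), and $\{\delta_x:x\in X\}=\{\delta_x:x\in\beta X\}\cap P(X)$ since $\delta_x\in P(X)=P_\sigma(X)$ forces $\delta_x(\{x\})=1$ with $x\notin\beta X\setminus X$ being impossible when $\{x\}$ would be a compact $G_\delta$ in $\beta X\setminus X$ — more carefully, if $x\in\beta X\setminus X$ then $\{x\}$ is compact but need not be $G_\delta$, so one instead argues that $\delta_x\in P(X)$ (i.e. $\delta_x$ gives full measure to a $\sigma$-compact subset of $X$) already forces $x\in X$. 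Hence $X\cong\{\delta_x:x\in X\}$ is closed in the realcompact space $P(X)$, and a closed subspace of a realcompact space is realcompact, so $X$ is realcompact.

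The step I expect to be the main obstacle is the verification that $P_\sigma(X)$ is realcompact — specifically, pinning down that $\{\mu\in P(\beta X):\mu(K)=0\}$ is genuinely $G_\delta$ (or at least of a type closed under the operations needed) for $K$ a compact $G_\delta$ subset of $\beta X\setminus X$, and then invoking the correct permanence property of realcompactness (closed under arbitrary intersections inside a fixed space, and $G_\delta$ subsets of realcompact spaces are realcompact — the latter is a classical theorem; see Engelking). If the family of such $K$ is large, one should be careful that the intersection is still realcompact; the safe route is the general fact that the intersection of any family of realcompact subspaces of a Tychonoff space is realcompact, applied inside $P(\beta X)$. The identification of $X$ with a closed copy inside $P(X)$ is routine once the membership characterization of $P_\sigma(X)$ is used to see that no Dirac measure $\delta_x$ with $x\in\beta X\setminus X$ can lie in $P(X)=P_\sigma(X)$.
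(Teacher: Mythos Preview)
Your overall architecture matches the paper's proof exactly: show that $P_\sigma(X)$ is realcompact, use the hypothesis $P_\sigma(X)=P(X)$, and then deduce realcompactness of $X$ from the closed embedding $x\mapsto\delta_x$ into $P(X)$. The paper simply cites Theorem~1.1 of \cite{BanakhChigogidzeFedorchuk2003} for the realcompactness of $P_\sigma(X)$ and states without further detail that $X$ is closely embedded in $P(X)$; you are attempting to supply both arguments from scratch.

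There is, however, a genuine gap in your proof that $P_\sigma(X)$ is realcompact. First a minor slip: lower semicontinuity of $\mu\mapsto\mu(U)$ for open $U$ gives that $\{\mu:\mu(U)>c\}$ is open, not $\{\mu:\mu(U)<c\}$, so your chain of inequalities does not go through as written. More seriously, the assertion ``a $G_\delta$ subset of a compact (hence realcompact) space is realcompact'' is \emph{not} a theorem of ZFC. If $D$ is a discrete space of Ulam-measurable cardinality, then $D$ is open --- hence trivially $G_\delta$ --- in its one-point compactification $\alpha D$, yet $D$ is not realcompact. So showing that each set $\{\mu\in P(\beta X):\mu(K)=0\}$ is merely $G_\delta$ is not enough.

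The repair is to exploit that every compact $G_\delta$ set $K$ in the normal space $\beta X$ is a \emph{zero-set}: choose $f\in C(\beta X,[0,1])$ with $K=f^{-1}(0)$ and put $g_n=\max(1-nf,0)$. Then $g_n\downarrow\chi_K$ pointwise, each map $\mu\mapsto\mu(g_n)$ is continuous on $P(\beta X)$, and $\mu(K)=\inf_n\mu(g_n)$. Hence
\[
\{\mu:\mu(K)=0\}=\bigcap_{k\in\NN}\;\bigcup_{n\in\NN}\{\mu:\mu(g_n)<\tfrac{1}{k}\},
\]
a countable intersection of cozero sets in $P(\beta X)$. Since a cozero subset of a realcompact space is realcompact and arbitrary intersections of realcompact subspaces of a fixed Tychonoff space are realcompact (both are in Engelking), this yields that each $\{\mu:\mu(K)=0\}$, and therefore $P_\sigma(X)$, is realcompact.

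Your treatment of the closed embedding is correct once you reach the clean argument at the end: $\delta_x\in P(X)$ means $\delta_x(A)=1$ for some $\sigma$-compact $A\subseteq X$, i.e.\ $x\in A\subseteq X$; so $\{\delta_x:x\in X\}=\{\delta_x:x\in\beta X\}\cap P(X)$ is closed in $P(X)$. The detour through $G_\delta$ singletons in $\beta X\setminus X$ is unnecessary and, as you yourself note, does not work since such singletons need not be $G_\delta$.
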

\begin{proof}
By Theorem 1.1 of  \cite{BanakhChigogidzeFedorchuk2003}, the space $P_\sigma(X)$ is always realcompact. Since $P(X)=P_\sigma(X)$, we obtain that $P(X)$ is also realcompact. Since $X$ is closely embedded in $P(X)$, we obtain that $X$ is realcompact, too.
\end{proof}


\section{N.~Noble's results on infinite products of spaces} \label{sec:noble}


In this section we formulate some results and their consequences from Noble's article \cite{Nob} that are necessary in what follows.
In \cite{Nob} he considered $\Sigma^0$-products, which are now called $\sigma$-products. Accordingly, $\Si^0$-continuous mappings will be called $\si$-continuous mappings.

Let $X=\prod_{\alpha\in \AAA}X_\alpha$ be a product of spaces. For $x=(x_\alpha)_\alpha, y=(y_\alpha)_\alpha\in X$, let
\[
\begin{aligned}
\delta(x,y) &= \{ \alpha\in \AAA : x_\alpha\neq y_\alpha\},\\
\sigma(X,x) &= \{ y\in X : |\delta(x,y)|<\w\}.
\end{aligned}
\]
Sets of the form $\sigma(X,x)$ are called  {\em $\sigma$-products}. A mapping $f$ from $X$ to a space $Y$ is called {\em $\sigma$-continuous} if $f$ is continuous on every $\sigma$-product in $X$; $f$ is called {\em $2$-continuous} if $f$ is continuous on every subset of $X$ of the form $\prod_{\alpha\in \AAA} Y_\alpha$, where $|Y_\alpha|\leq 2$. The following result was proved in Theorem 1.1 of Noble \cite{Nob}.
\begin{theorem}[\cite{Nob}] \label{t:Noble-continuity}
Let $X=\prod_{\alpha\in \AAA}X_\alpha$ be a product of spaces, and let $Y$ be a space. If the mapping $f:X\to Y$ is $\sigma$-continuous and $2$-continuous, then $f$ is continuous.
\end{theorem}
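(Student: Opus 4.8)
The plan is to reduce to the case $Y=\IR$ and then argue by contradiction, using the two hypotheses together: $2$-continuity lets one detect a failure of continuity already inside a subproduct having at most two points in each factor, and this makes it possible to transport the offending points into a $\sigma$-product, where $\sigma$-continuity rules them out.

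First I would reduce to $Y=\IR$. Since $Y$ is Tychonoff, $f$ is continuous if and only if $\varphi\circ f$ is continuous for every $\varphi\in C(Y)$, and for such $\varphi$ the map $\varphi\circ f$ is again $\sigma$-continuous and $2$-continuous: the restriction of $\varphi\circ f$ to any $\sigma$-product, or to any subproduct $\prod_{\alpha}Y_\alpha$ with $|Y_\alpha|\le 2$, is just $\varphi$ composed with the corresponding (continuous) restriction of $f$. So assume $f\colon X\to\IR$ and suppose, towards a contradiction, that $f$ is discontinuous at some $x=(x_\alpha)_{\alpha\in\AAA}$. Shrinking the witnessing neighbourhood of $f(x)$ to a bounded interval, composing with a suitable affine map and, if necessary, with $t\mapsto -t$, and passing to a cofinal subnet --- none of which affects $\sigma$- or $2$-continuity --- we obtain $f(x)=0$ together with a net $(y^{d})_{d\in D}$ in $X$ such that $y^{d}\to x$ and $f(y^{d})\ge 1$ for all $d$.

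The heart of the argument is then two short steps. Fix $d$. By $2$-continuity, $f$ is continuous on $P_d:=\prod_{\alpha\in\AAA}\{x_\alpha,y^{d}_\alpha\}$, a subproduct of $X$ all of whose factors have at most two points. For finite $F\subseteq\AAA$ let $y^{d}|_F$ denote the point with $\alpha$-th coordinate $y^{d}_\alpha$ for $\alpha\in F$ and $x_\alpha$ otherwise; then $y^{d}|_F\in P_d$, and as $F$ runs through the finite subsets of $\AAA$ directed by inclusion, $y^{d}|_F\to y^{d}$ coordinatewise, hence in $P_d$. Continuity of $f\rst{P_d}$ at $y^{d}$ therefore yields a finite $F_d$ with $f\big(y^{d}|_{F_d}\big)\ge f(y^{d})-\tfrac12\ge\tfrac12$; put $w^{d}:=y^{d}|_{F_d}$. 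Since $w^{d}$ agrees with $x$ outside the finite set $F_d$, it lies in $\sigma(X,x)$. Moreover $w^{d}\to x$: each coordinate $w^{d}_\alpha$ is either $x_\alpha$ or $y^{d}_\alpha$, and $y^{d}_\alpha\to x_\alpha$ because $y^{d}\to x$, so $w^{d}_\alpha\to x_\alpha$; coordinatewise convergence then gives $w^{d}\to x$ in $X$, hence in the subspace $\sigma(X,x)$. But $\sigma$-continuity makes $f\rst{\sigma(X,x)}$ continuous, so $f(w^{d})\to f(x)=0$, contradicting $f(w^{d})\ge\tfrac12$. Hence $f$ is continuous.

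With this route there is no single hard step; the point is the joint use of the two hypotheses, and that neither alone suffices --- e.g. the map $f\colon\{0,1\}^{\w}\to\IR$ with $f(z)=1$ if $\{\alpha:z_\alpha=1\}$ is infinite and $f(z)=0$ otherwise is $\sigma$-continuous but discontinuous (and, consistently, not $2$-continuous, as $\{0,1\}^{\w}$ is itself a $\le 2$-valued subproduct). The idea worth isolating is that $2$-continuity lets the putative discontinuity of $f$ at the auxiliary point $y^{d}$ be witnessed inside a $\le 2$-valued subproduct, which is exactly what allows $y^{d}$ to be replaced by a nearby point $w^{d}\in\sigma(X,x)$ without losing $f(w^{d})\ge\tfrac12$; once the bad points sit in $\sigma(X,x)$ and still converge to $x$, $\sigma$-continuity is violated. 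The only things needing care are the bookkeeping that $\sigma$- and $2$-continuity survive the reductions, and the routine check that $w^{d}\to x$.
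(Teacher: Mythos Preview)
The paper does not give its own proof of this theorem; it merely quotes it as Theorem~1.1 of Noble \cite{Nob} and uses it as a black box. So there is nothing in the paper to compare your argument against.

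That said, your argument is correct and is essentially Noble's own proof. The reduction to $Y=\IR$ via complete regularity is fine (the paper's standing convention is that all spaces are Tychonoff). The key step---using $2$-continuity on $P_d=\prod_\alpha\{x_\alpha,y^d_\alpha\}$ to replace $y^d$ by a point $w^d\in\sigma(X,x)$ with $f(w^d)\ge\tfrac12$, and then checking that $w^d\to x$ because each coordinate $w^d_\alpha\in\{x_\alpha,y^d_\alpha\}$ and $y^d_\alpha\to x_\alpha$---is exactly the mechanism in Noble's paper. The only point worth tightening in your write-up is the phrase ``passing to a cofinal subnet'': what you actually use is that from $|f(y^d)-f(x)|\ge\e$ one may, by splitting $D$ according to the sign of $f(y^d)-f(x)$ and choosing the cofinal half, assume $f(y^d)-f(x)\ge\e$; this is routine but could be stated more explicitly. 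Your illustrative example at the end is also correct: $f$ is constant on every $\sigma$-product in $\mathbf{2}^\w$ (equal to $1$ on $\sigma$-products centered at points with infinitely many $1$'s, and $0$ otherwise), hence $\sigma$-continuous, while $\mathbf{2}^\w$ itself is a $2$-valued subproduct on which $f$ is discontinuous.
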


An ultrafilter on a set $A$ is $\tau$-complete if it is closed under intersections of families of cardinality $<\tau$. Let us say that a cardinal $|A|$ is {\em Ulam-measurable} if there exists an $\om_1$-complete free ultrafilter on $A$. (A free ultrafilter is the same as a nonprincipal ultrafilter.) A cardinal is Ulam-measurable if and only if it greater than or equal to the first measurable cardinal.
As usual \cite[Definition 10.3]{Jech2006},
an uncountable cardinal $\tau$ is measurable if there exists a $\tau$-complete free ultrafilter on $\tau$.

In \cite{Nob} Ulam-measurable cardinals are called measurable cardinals.

Denote by $\mathbf{2}$ the discrete two-point space $\{0,1\}$. Recall that a cardinal $\tau$ is {\em sequential} if there exists a sequentially continuous function $f: \mathbf{2}^\tau\to \IR$ which is discontinuous. Any Ulam-measurable cardinal is sequential, see  Keisler and  Tarski \cite{KeislerTarski1963}.
On the other hand, Mazur showed in \cite{Mazur1952} that each cardinal less than the first weakly inaccessible cardinal is not sequential. In particular, $\aleph_1$ is not sequential.

In \cite{KeislerTarski1963}, Keisler and  Tarski posed the following problem: {\em Is it true that a cardinal $\tau$ is sequential if, and only if, it is Ulam-measurable}? Under the Martin's Axiom $\mathrm{MA}$, a cardinal $\tau$ is sequential if, and only if,  it is Ulam-measurable \cite{Solovay1971,Chudnovskij1973}.
If the continuum $\mathfrak{c}$ is not sequential, then every sequential cardinal is a Ulam-measurable cardinal \cite{BalcarHusek2001}.

\begin{proposition} \label{p:noble:prod}
Let $X=\prod_{\alpha\in \AAA}X_\alpha$ be a product of spaces.
\begin{enumerate}
\item[{\rm(i)}]
If all $\sigma$-products in $X$ are $k_\IR$-spaces, then also $X$ is a $k_\IR$-space.
\item[{\rm(ii)}]
If all $\sigma$-products in $X$ are $s_\IR$-spaces and $|\AAA|$ is not sequential, then also $X$ is an $s_\IR$-space.
\end{enumerate}
\end{proposition}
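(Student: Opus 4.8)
The plan is to reduce continuity of a $k$-continuous (resp. sequentially continuous) real-valued function on $X=\prod_{\alpha\in\AAA}X_\alpha$ to Noble's continuity criterion, Theorem \ref{t:Noble-continuity}: it suffices to check that such a function $f:X\to\IR$ is $\sigma$-continuous and $2$-continuous. For part (i), let $f:X\to\IR$ be $k$-continuous. The restriction of $f$ to any $\sigma$-product $\sigma(X,x)$ is again $k$-continuous (every compact subset of $\sigma(X,x)$ is compact in $X$), hence continuous because $\sigma(X,x)$ is a $k_\IR$-space by hypothesis; so $f$ is $\sigma$-continuous. For $2$-continuity, note that any subproduct $\prod_{\alpha}Y_\alpha$ with $|Y_\alpha|\le 2$ is a compact space (a product of spaces of size $\le 2$, each of which is compact) sitting inside $X$; since $f$ is $k$-continuous, its restriction to this compact set is continuous. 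Thus $f$ is both $\sigma$-continuous and $2$-continuous, so Theorem \ref{t:Noble-continuity} gives that $f$ is continuous, i.e. $X$ is a $k_\IR$-space.

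For part (ii), let $f:X\to\IR$ be sequentially continuous. As before, the restriction of $f$ to any $\sigma$-product $\sigma(X,x)$ is sequentially continuous (convergent sequences in $\sigma(X,x)$ are convergent sequences in $X$), hence continuous since each $\sigma$-product is an $s_\IR$-space; so $f$ is $\sigma$-continuous. The remaining issue is $2$-continuity: we must show that the restriction of $f$ to each subproduct $D=\prod_{\alpha\in\AAA}Y_\alpha$ with $|Y_\alpha|\le 2$ is continuous. Such a $D$ is homeomorphic to a product $\mathbf{2}^{B}\times(\text{point})$ for some $B\subseteq\AAA$, i.e. to $\mathbf{2}^{B}$ with $|B|\le|\AAA|$. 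The restriction $f\rst{D}$ is sequentially continuous on $\mathbf{2}^{B}$. Here is where the cardinality hypothesis enters: since $|\AAA|$ is not sequential, neither is any $|B|\le|\AAA|$ (being not sequential passes to smaller cardinals, as a discontinuous sequentially continuous function on $\mathbf{2}^{\lambda}$ with $\lambda\le|B|$ would give one on $\mathbf{2}^{|B|}$ by pulling back along a coordinate projection, which is sequentially continuous with sequentially continuous... more carefully: if $\mathbf{2}^{|B|}$ carried a discontinuous sequentially continuous $g$, restricting to a subproduct $\mathbf{2}^{\lambda}$ on which it is still discontinuous shows $\lambda$ is sequential). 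Hence by definition of "sequential cardinal", every sequentially continuous function $\mathbf{2}^{B}\to\IR$ is continuous, so $f\rst{D}$ is continuous. Therefore $f$ is $2$-continuous, and Theorem \ref{t:Noble-continuity} again yields that $f$ is continuous.

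The one genuinely delicate point is the monotonicity of non-sequentiality used in part (ii): one must verify that if $|\AAA|$ is not sequential then no smaller infinite cardinal is sequential, and that a subproduct of $\mathbf{2}^{\AAA}$ indexed by $B\subseteq\AAA$ is (homeomorphic to) $\mathbf{2}^{B}$, so that a discontinuous sequentially continuous function on it produces one on $\mathbf{2}^{|B|}$. This is standard — if $g:\mathbf{2}^{\kappa}\to\IR$ is sequentially continuous and discontinuous at some point $p$, then $g$ fails continuity already on a countable subset of coordinates together with the rest fixed... actually one argues: $g$ discontinuous at $p$ means there is $\e>0$ and a net $p_i\to p$ with $|g(p_i)-g(p)|\ge\e$; sequential continuity rules out doing this along a sequence, so the relevant obstruction is genuinely about the cardinal $\kappa$, and the definition of sequential cardinal is chosen precisely to encode "$\mathbf{2}^{\kappa}$ admits a discontinuous sequentially continuous real function" — and this property is inherited downward because $\mathbf{2}^{\lambda}$ is a retract of $\mathbf{2}^{\kappa}$ for $\lambda\le\kappa$ and sequential continuity, discontinuity are both preserved under composing with a retraction. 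Everything else is a routine bookkeeping of the facts that compact sets, convergent sequences, and subproducts behave well under the relevant restrictions, so the bulk of the argument is the clean two-line invocation of Noble's theorem.
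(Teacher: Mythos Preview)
Your approach is exactly the paper's: verify $\sigma$-continuity and $2$-continuity and invoke Noble's Theorem~\ref{t:Noble-continuity}. In fact you supply more detail than the paper does for the $2$-continuity step in (ii); the paper simply asserts ``since $f$ is $s$-continuous and $|\AAA|$ is not sequential, $f$ is $2$-continuous'' and leaves the justification implicit.

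One expository wrinkle worth cleaning up: in your last paragraph you state the correct target (``if $|\AAA|$ is not sequential then no smaller cardinal is sequential''), but the subsequent discussion wobbles on the direction. The sentence about ``restricting $g$ to a subproduct $\mathbf{2}^\lambda$ on which it is still discontinuous'' is the wrong way around and does not work. Your final retraction argument is the right one, but it shows that the property ``$\mathbf{2}^\kappa$ admits a discontinuous sequentially continuous real function'' is inherited \emph{upward} in $\kappa$ (via $g\circ r$ with $r:\mathbf{2}^\kappa\to\mathbf{2}^\lambda$ the projection, which is open, so $g\circ r$ stays discontinuous), not downward as you wrote. That upward inheritance of sequentiality is precisely the contrapositive of what you need, so the argument is sound; just fix the wording.
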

\begin{proof}
Let $f\: X\to\IR$ be a function.

(i) Assume that $f$ is $k$-continuous. Since all $\sigma$-products in $X$ are $k_\IR$-spaces, the function $f$ is $\sigma$-continuous. Since $f$ is $k$-continuous, it is $2$-continuous.
It follows from Theorem \ref{t:Noble-continuity} that $f$ is continuous.

(ii) Assume that $f$ is $s$-continuous. Since all $\sigma$-products in $X$ are $s_\IR$-spaces, $f$ is $\sigma$-continuous. Since $f$ is $s$-continuous and $|\AAA|$ is not sequential, the function $f$ is $2$-continuous. It follows from Theorem \ref{t:Noble-continuity} that $f$ is continuous.
\end{proof}

Theorem 3.2(ii) from \cite{Nob} implies the following statement which shows that the condition in (ii) of Proposition \ref{p:noble:prod} that $|\AAA|$ is not sequential is necessary.

\begin{proposition}[\cite{Nob}]\label{p:product-sR--seq-card}
Let $X=\prod_{\alpha\in\AAA}$ be the product of a nonempty family $\{X_\alpha\}_{\alpha\in\AAA}$ of spaces such that $|X_\alpha|\geq 2$ for every $\alpha\in\AAA$. If $X$ is an $s_\IR$-space, then $|\AAA|$ is not sequential.
\end{proposition}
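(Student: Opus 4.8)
The plan is to contrapose: assuming $|\AAA|$ is sequential, I will build a discontinuous $s$-continuous function on $X=\prod_{\alpha\in\AAA}X_\alpha$, thereby showing $X$ is not an $s_\IR$-space. Since $|\AAA|$ is sequential, by definition there is a discontinuous sequentially continuous function $g\colon \mathbf{2}^{\AAA}\to\IR$. The idea is to transport $g$ to $X$ via a retraction-like map $r\colon X\to \mathbf{2}^{\AAA}$ obtained coordinatewise. Because $|X_\alpha|\geq 2$ for every $\alpha$, I can pick two distinct points $a_\alpha\neq b_\alpha$ in each $X_\alpha$, and then, for each $\alpha$, choose a continuous function $h_\alpha\colon X_\alpha\to\mathbf{2}$ with $h_\alpha(a_\alpha)=0$ and $h_\alpha(b_\alpha)=1$ (this exists by complete regularity, composing a real-valued Urysohn-type function separating $a_\alpha$ from $b_\alpha$ with a retraction of $\IR$ onto $\{0,1\}$ — actually more carefully: take $\varphi_\alpha\in C(X_\alpha,[0,1])$ with $\varphi_\alpha(a_\alpha)=0,\varphi_\alpha(b_\alpha)=1$ and let $h_\alpha$ be $\varphi_\alpha$ followed by... no, that is not continuous; instead just use that in a Tychonoff space points are separated by continuous maps into $[0,1]$, and note we only need the composite map $r$ to be continuous and surjective-enough, not literally $\mathbf{2}$-valued on each factor).

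The cleaner route is this: define $r\colon X\to[0,1]^{\AAA}$ by $r\big((x_\alpha)_\alpha\big)=(\varphi_\alpha(x_\alpha))_\alpha$, which is continuous, and note $r$ restricted to the "cube" $\prod_\alpha\{a_\alpha,b_\alpha\}$ is a homeomorphism onto $\mathbf{2}^{\AAA}\subseteq[0,1]^{\AAA}$. Then define $F\colon X\to\IR$ by first using a sequentially continuous $g\colon[0,1]^{\AAA}\to\IR$ that is discontinuous — such $g$ exists because $\mathbf{2}^{\AAA}$ is closed in $[0,1]^{\AAA}$ and one can extend (or rather, one observes directly that $[0,1]^{\AAA}$ has a discontinuous sequentially continuous function whenever $\mathbf{2}^{\AAA}$ does, since $[0,1]$ is sequentially homeomorphic to a space containing a convergent copy of the relevant structure — in fact $\mathbf 2$ embeds in $[0,1]$ as a closed retract). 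So set $F=g\circ r$. Since $r$ is continuous, $F$ is sequentially continuous on $X$: a convergent sequence in $X$ maps to a convergent sequence in $[0,1]^{\AAA}$, on which $g$ is continuous. It remains to check $F$ is discontinuous on $X$, which follows because $r$ restricted to $\prod_\alpha\{a_\alpha,b_\alpha\}$ is a homeomorphism onto $\mathbf{2}^{\AAA}$ and $g$ restricted there is the original discontinuous $g$; thus $F$ restricted to this closed subspace of $X$ is discontinuous, hence $F$ is discontinuous on $X$.

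Let me record the steps in order. First I would fix $a_\alpha\neq b_\alpha\in X_\alpha$ and $\varphi_\alpha\in C(X_\alpha,[0,1])$ with $\varphi_\alpha(a_\alpha)=0$, $\varphi_\alpha(b_\alpha)=1$. Second, I would form $C=\prod_{\alpha}\{a_\alpha,b_\alpha\}\subseteq X$ with the product topology (a closed subspace of $X$, homeomorphic to $\mathbf 2^{\AAA}$) and the projection-style map $\pi=\prod_\alpha\varphi_\alpha\colon X\to[0,1]^{\AAA}$, observing $\pi\uhr{C}$ is a homeomorphism onto $\mathbf 2^{\AAA}\subseteq[0,1]^{\AAA}$. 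Third, I would invoke sequentiality of $|\AAA|$ to get a discontinuous sequentially continuous $g_0\colon\mathbf 2^{\AAA}\to\IR$, and then produce a discontinuous sequentially continuous $g\colon[0,1]^{\AAA}\to\IR$ extending it: for this use that $\mathbf 2^{\AAA}$ is a retract of $[0,1]^{\AAA}$ via the coordinatewise retraction $\rho\colon[0,1]\to\{0,1\}$ rounding at $1/2$ — but $\rho$ is not continuous, so instead define $g:=g_0\circ(\text{coordinatewise }\rho)$ and check directly that this is sequentially continuous on $[0,1]^{\AAA}$ because any convergent sequence in $[0,1]^{\AAA}$ is eventually bounded away from $1/2$ in each fixed coordinate and... this needs care. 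Fourth, set $F:=g\circ\pi$ and verify sequential continuity (composition of continuous $\pi$ with sequentially continuous $g$) and discontinuity (via the restriction to $C$).

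The main obstacle I anticipate is the third step: cleanly getting from a discontinuous sequentially continuous function on $\mathbf 2^{\AAA}$ to one on $[0,1]^{\AAA}$ (or directly on $X$), since the naive rounding map is not sequentially continuous on the full cube $[0,1]^{\AAA}$ (a sequence can oscillate around $1/2$ in infinitely many coordinates simultaneously). The robust fix is to avoid $[0,1]^{\AAA}$ altogether: since $C=\prod_\alpha\{a_\alpha,b_\alpha\}$ is a closed copy of $\mathbf 2^{\AAA}$ inside $X$ and is itself a retract of $X$ — define $s\colon X\to C$ by $s\big((x_\alpha)_\alpha\big)=(t_\alpha(x_\alpha))_\alpha$ where $t_\alpha\colon X_\alpha\to\{a_\alpha,b_\alpha\}$ sends $x_\alpha\mapsto a_\alpha$ if $\varphi_\alpha(x_\alpha)<\tfrac12$ and $x_\alpha\mapsto b_\alpha$ otherwise — one still faces the same non-continuity issue. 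Therefore the genuinely safe argument uses Noble's machinery differently, or more simply: $\mathbf 2^{\AAA}$ embeds as a \emph{closed} subspace of $X$ (namely $C$), and a closed subspace of an $s_\IR$-space which is additionally \emph{sequentially closed and retract-like}... Actually the correct clean observation is that if $X$ were an $s_\IR$-space, then so would be its closed subspace $C\cong\mathbf 2^{\AAA}$ provided $C$ is $C$-embedded or a retract; $C$ \emph{is} a retract of $X$ (map $x_\alpha$ to $a_\alpha$ or $b_\alpha$ according to which $\varphi_\alpha$-value is closer, and where ties never occur since $\varphi_\alpha$ takes only values, wait it takes all of $[0,1]$) — hmm. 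I expect to resolve this by citing the stated source (Theorem 3.2(ii) of \cite{Nob}) which presumably handles exactly this reduction, and to present the transport map construction as the conceptual skeleton, flagging that the sequential-continuity verification of the transport is where Noble's original argument does the real work.
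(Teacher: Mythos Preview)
The paper itself gives no proof of this proposition: it simply records that the statement follows from Theorem~3.2(ii) of Noble~\cite{Nob}. So your eventual landing point --- defer to Noble --- is exactly what the paper does.

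Your attempted direct argument, however, has a real gap, and you have located it yourself in step~3. To transport the discontinuous sequentially continuous $g_0\colon\mathbf 2^{\AAA}\to\IR$ to $X$, you need a sequentially continuous retraction-type map from $X$ (or from $[0,1]^{\AAA}$) onto $\mathbf 2^{\AAA}$, and none of your candidates supplies one. The coordinatewise rounding $\rho\colon[0,1]\to\{0,1\}$ is \emph{not} sequentially continuous on $[0,1]^{\AAA}$: take a point $y$ with $y_\alpha=\tfrac12$ for uncountably many $\alpha$ and a sequence $y_n\to y$ whose $\alpha$-th coordinates alternate above and below $\tfrac12$; then $(\prod_\alpha\rho)(y_n)$ need not converge at all. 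Your fallback --- that $C=\prod_\alpha\{a_\alpha,b_\alpha\}$ is a continuous retract of $X$ --- fails whenever some $X_\alpha$ is connected. And the hope that a closed (or $C$-embedded) subspace of an $s_\IR$-space is again $s_\IR$ is false in general: the paper itself notes (Remark~\ref{rem:sR-perman}(ii)) that the compact space $\w_1+1$ sits closely inside the $s_\IR$-space $\IR^{\w_1}$ without being $s_\IR$.

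So the honest write-up here is the paper's: state the result and attribute it to Noble. If you want to sketch the mechanism, the correct skeleton is not a retraction but rather Noble's analysis of how sequential continuity on a full product interacts with $2$-continuity (continuity on all two-point sub-products); his Theorem~3.2 shows directly that when $|\AAA|$ is sequential one can manufacture a sequentially continuous function on $\prod_\alpha X_\alpha$ that fails $2$-continuity on the sub-product $C$, hence is discontinuous. That construction is genuinely more delicate than composing with a rounding map, which is why your direct attempts stall.
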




Since each $\sigma$-product of first countable spaces is Fr\'{e}chet--Urysohn \cite{Nob}, the following assertion follows from Proposition \ref{p:noble:prod}.

\begin{proposition}[\cite{Nob}] \label{p:noble:fc-prod}
Let $X=\prod_{\alpha\in \AAA}X_\alpha$ be a product of first countable spaces.
\begin{enumerate}
\item[{\rm(i)}]
The space $X$ is a $k_\IR$-space.
Consequently, for any cardinal $\lambda$, the product $\IR^\lambda$ is a $k_\IR$-space.
\item[{\rm(ii)}]
If $|\AAA|$ is not sequential, then $X$ is an $s_\IR$-space.
Consequently, the product $\IR^{\aleph_n}$ is an $s_\IR$-space for every $n\in\NN$.
\end{enumerate}
\end{proposition}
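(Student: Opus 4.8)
The plan is to reduce both parts to Proposition \ref{p:noble:prod} by invoking the structural fact, due to Noble, that every $\sigma$-product $\sigma(X,x)$ of first countable spaces is Fr\'echet--Urysohn. First I would recall from the implication diagram in Section \ref{sec:preliminary} that a Fr\'echet--Urysohn space is sequential, hence both a $k$-space (so a $k_\IR$-space) and an $s_\IR$-space. Consequently, for the product $X=\prod_{\alpha\in\AAA}X_\alpha$ of first countable spaces, every $\sigma$-product in $X$ is simultaneously a $k_\IR$-space and an $s_\IR$-space.

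For (i): since all $\sigma$-products in $X$ are $k_\IR$-spaces, Proposition \ref{p:noble:prod}(i) immediately yields that $X$ itself is a $k_\IR$-space. Specializing to $X_\alpha=\IR$ for all $\alpha$ (the real line being first countable) gives that $\IR^\lambda$ is a $k_\IR$-space for every cardinal $\lambda$.

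For (ii): assuming in addition that $|\AAA|$ is not sequential, since all $\sigma$-products in $X$ are $s_\IR$-spaces, Proposition \ref{p:noble:prod}(ii) gives that $X$ is an $s_\IR$-space. For the final assertion, I would note that for each $n\in\NN$ the cardinal $\aleph_n$ lies below the first weakly inaccessible cardinal and is therefore not sequential by Mazur's theorem quoted earlier; applying the first part of (ii) with $X_\alpha=\IR$ and $|\AAA|=\aleph_n$ then shows that $\IR^{\aleph_n}$ is an $s_\IR$-space.

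The only nontrivial ingredient is Noble's theorem that $\sigma$-products of first countable spaces are Fr\'echet--Urysohn; the rest is bookkeeping with implications already recorded in the excerpt. I do not anticipate any real obstacle beyond making sure the chain Fr\'echet--Urysohn $\Rightarrow$ sequential $\Rightarrow$ $s_\IR$-space is explicitly cited, which it is via the diagram in Section \ref{sec:preliminary}.
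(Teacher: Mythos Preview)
Your proposal is correct and follows essentially the same approach as the paper: the paper simply notes that every $\sigma$-product of first countable spaces is Fr\'echet--Urysohn (citing Noble) and then invokes Proposition~\ref{p:noble:prod}, which is exactly what you do. Your write-up is slightly more explicit about the implication chain and the use of Mazur's theorem for the $\aleph_n$ case, but there is no substantive difference.
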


A topological space is of {\em pointwise countable type} (or {\em point-countable type}) provided every  point in the space is contained in a compact set of countable character.
Any \v Chech complete, first countable or locally compact space is of pointwise countable type \cite[3.1.E(b), 3.3.I]{Eng}.

\begin{proposition}[\cite{Nob}] \label{p:noble:pwct}
Let $X=\prod_{\alpha\in \AAA}X_\alpha$ be a product of spaces of pointwise countable type (in particular, locally compact spaces). Then the space $X$ is a $k_\IR$-space.
\end{proposition}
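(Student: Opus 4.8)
The plan is to reduce Proposition~\ref{p:noble:pwct} to Proposition~\ref{p:noble:prod}(i) by verifying that every $\sigma$-product in a product of spaces of pointwise countable type is a $k_\IR$-space. Fix a point $x=(x_\alpha)_\alpha\in X$ and consider $\sigma(X,x)$. For each $\alpha$, pick a compact set $K_\alpha\subseteq X_\alpha$ containing $x_\alpha$ with countable character in $X_\alpha$; set $K=\prod_\alpha K_\alpha$, which is compact by Tychonoff, and note $x\in K$. The key structural observation is that $\sigma(X,x)=\bigcup_{n} \sigma_n$, where $\sigma_n$ consists of those $y\in\sigma(X,x)$ with $|\delta(x,y)|\le n$; and more usefully, $\sigma(X,x)$ is the union, over finite $F\subseteq\AAA$, of the ``slices'' $S_F=\{y\in X: y_\alpha\in K_\alpha\text{ for }\alpha\notin F\}\cap\sigma(X,x)$ is not quite what we want either — rather, the right move is to show $\sigma(X,x)$ carries enough compact sets. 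I would show directly that $\sigma(X,x)$ is a $k$-space: given a non-closed $A\subseteq\sigma(X,x)$, take $y$ in its closure but not in $A$; since $y$ differs from $x$ in only finitely many coordinates, say on a finite set $F$, the set $P_F=\{z\in X: z_\alpha=y_\alpha\ (\alpha\in F),\ z_\alpha\in K_\alpha\ (\alpha\notin F)\}$ is a compact subset of $\sigma(X,x)$ (it is homeomorphic to $\prod_{\alpha\notin F}K_\alpha$), it contains $y$, and one uses the countable character of each $K_\alpha$ together with the fact that only countably many coordinates can ``matter'' for a sequence/net converging inside $\sigma(X,x)$ to locate a witnessing compact set.

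More cleanly, I would invoke the fact established in Noble~\cite{Nob} (and used implicitly before Proposition~\ref{p:noble:fc-prod}) that a $\sigma$-product of first-countable spaces is Fréchet--Urysohn, and adapt it: a $\sigma$-product of spaces of pointwise countable type is itself of pointwise countable type, hence in particular a $k$-space, hence a $k_\IR$-space. Indeed, for $y\in\sigma(X,x)$ differing from $x$ on the finite set $F$, the set $\prod_{\alpha\in F}\{y_\alpha\}\times\prod_{\alpha\notin F}K_\alpha$ is a compact subset of $\sigma(X,x)$ containing $y$, and its character in $\sigma(X,x)$ is countable because all but countably many factors $K_\alpha$ can be handled by a single basic-open constraint at a time while the character of $K_\alpha$ in $X_\alpha$ is countable for each of the relevant $\alpha$; a standard diagonal argument assembles a countable neighborhood base of this compact set in $\sigma(X,x)$. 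Thus each $\sigma$-product in $X$ is of pointwise countable type, therefore a $k$-space, therefore a $k_\IR$-space, and Proposition~\ref{p:noble:prod}(i) yields that $X$ is a $k_\IR$-space.

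The main obstacle I anticipate is the bookkeeping in showing that the compact box $\prod_{\alpha\in F}\{y_\alpha\}\times\prod_{\alpha\notin F}K_\alpha$ has \emph{countable} character inside $\sigma(X,x)$: the product $\prod_{\alpha\notin F}K_\alpha$ need not be first countable or even of countable character in $X$ when $\AAA$ is uncountable, so one cannot simply multiply countable bases. The point is that a neighborhood of this box in the \emph{$\sigma$-product topology} only needs to control finitely many coordinates at a time, and one exploits that a net in $\sigma(X,x)$ approaching the box eventually agrees with $x$ off a fixed finite set; the correct statement to prove is therefore that the box has countable character relative to $\sigma(X,x)$, not relative to $X$, and this is where the $\sigma$-product structure does the real work. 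Once that lemma is in place the rest is the one-line reduction above.
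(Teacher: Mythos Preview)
Your reduction via Proposition~\ref{p:noble:prod}(i) is a reasonable strategy, but the execution has a concrete error. The set $P_F=\{z\in X: z_\alpha=y_\alpha\ (\alpha\in F),\ z_\alpha\in K_\alpha\ (\alpha\notin F)\}$, equivalently the ``box'' $\prod_{\alpha\in F}\{y_\alpha\}\times\prod_{\alpha\notin F}K_\alpha$, is \emph{not} a subset of $\sigma(X,x)$ whenever infinitely many of the $K_\alpha$ contain a point other than $x_\alpha$: a point of this box may differ from $x$ in infinitely many coordinates. So your candidate compact witnesses simply do not live inside the $\sigma$-product, and the argument that $\sigma(X,x)$ is of pointwise countable type collapses. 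The ``main obstacle'' you anticipate (countable character of the box in $\sigma(X,x)$) is therefore moot; the box is not there to have a character at all. More generally, the assertion that a $\sigma$-product of spaces of pointwise countable type is again of pointwise countable type is not something you can expect to salvage along these lines.

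The paper avoids this entirely by \emph{not} trying to prove that the $\sigma$-products are $k_\IR$-spaces. Instead it argues directly at the level of functions: given a $k$-continuous $f\colon X\to\IR$, it quotes Theorem~2.4 of \cite{Nob} to conclude that $f$ is $\sigma$-continuous (this is the substantive input, specific to products of spaces of pointwise countable type), notes that $k$-continuity trivially gives $2$-continuity, and then applies Theorem~\ref{t:Noble-continuity}. In other words, the paper uses the same endgame as Proposition~\ref{p:noble:prod}(i) (namely Noble's $\sigma$-continuous $+$ $2$-continuous $\Rightarrow$ continuous), but obtains $\sigma$-continuity from a black-box theorem in \cite{Nob} rather than from any structural property of the $\sigma$-products themselves. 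If you want to rescue your route, you would need an independent proof that $\sigma$-products of spaces of pointwise countable type are $k_\IR$-spaces; that is essentially the content of Noble's Theorem~2.4 and is not a bookkeeping matter.
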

\begin{proof}
Let $f\: X\to\IR$ be a $k$-continuous function. Then, by  Theorem 2.4 from \cite{Nob}, $f$ is $\si$-continuous. Since $f$ is $k$-continuous, then $f$ is $2$-continuous.
It follows from Theorem \ref{t:Noble-continuity} that $f$ is continuous.
\end{proof}

\begin{proposition}[\cite{Nob}] \label{p:noble:pseudo-prod}
Let $X=\prod_{\alpha\in \AAA}X_\alpha$ be a product of pseudocompact spaces.
\begin{enumerate}
\item[{\rm(i)}]
If each $X_\alpha$ is a \kr-space, then $X$ is a pseudocompact $k_\IR$-space.
\item[{\rm(ii)}]
If each $X_\alpha$ is a \sr-space and $|\AAA|$ is not sequential, then $X$ is an pseudocompact $s_\IR$-space.
\end{enumerate}
\end{proposition}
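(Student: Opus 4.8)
The plan is to deduce both parts from Proposition \ref{p:noble:prod}; throughout, write $X=\prod_{\alpha\in\AAA}X_\alpha$. For (i) it suffices, by Proposition \ref{p:noble:prod}(i), to verify two things: that $X$ is pseudocompact, and that every $\sigma$-product $\sigma(X,x)$ in $X$ is a $k_\IR$-space. For the second point I would appeal to Noble's analysis of $\sigma$-products of pseudocompact spaces in \cite{Nob}: a $\sigma$-product $\sigma(X,x)$ is the directed union of its finite subproducts $\prod_{\alpha\in F}X_\alpha\times\{x_\alpha:\alpha\notin F\}$, $F\subseteq\AAA$ finite, and Noble's results yield both that each such finite product of pseudocompact $k_\IR$-spaces is again a pseudocompact $k_\IR$-space and that the $\sigma$-product assembled from these pieces remains a $k_\IR$-space. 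For the pseudocompactness of $X$ I would combine the pseudocompactness of those finite subproducts with Glicksberg's theorem, which reduces the pseudocompactness of an arbitrary product of pseudocompact spaces to the pseudocompactness of all of its finite subproducts.

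Part (ii) runs in exact parallel, replacing $k$-continuity throughout by $s$-continuity and the $k_\IR$-property by the $s_\IR$-property, and using Proposition \ref{p:noble:prod}(ii) in place of Proposition \ref{p:noble:prod}(i). The hypothesis that $|\AAA|$ is not sequential is precisely the extra assumption needed to apply Proposition \ref{p:noble:prod}(ii), i.e.\ to pass from the $s_\IR$-property of all $\sigma$-products of $X$ to that of $X$ itself; at the level of the continuity criterion behind Proposition \ref{p:noble:prod} (namely Theorem \ref{t:Noble-continuity}), this is the step at which one checks that an $s$-continuous real-valued function on $X$ is automatically $2$-continuous, using that $|\AAA|$ not sequential forces every sequentially continuous function on $\mathbf{2}^{\kappa}$ with $\kappa\le|\AAA|$ to be continuous. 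The pseudocompactness of $X$ is obtained exactly as in (i), now using that finite products of pseudocompact $s_\IR$-spaces are pseudocompact.

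The substantive content, and the main obstacle, is the step concerning $\sigma$-products: a directed union of $k_\IR$-spaces — even of compact spaces — need not be a $k_\IR$-space, and a product of two pseudocompact spaces need not be pseudocompact, so neither the $k_\IR$-behaviour of the $\sigma$-products nor the pseudocompactness of the full product $X$ is a formal consequence of the hypotheses. Both genuinely rest on Noble's theorems on pseudocompact products, together with Glicksberg's theorem for the pseudocompactness of $X$; once these inputs are granted, assembling the conclusion via Proposition \ref{p:noble:prod} and Theorem \ref{t:Noble-continuity} is routine.
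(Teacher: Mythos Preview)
Your outline is correct in spirit, but it is considerably more elaborate than the paper's argument and, in the end, leans on the very same Noble theorems that the paper simply cites outright. The paper's proof of (i) is a one-line reference to Theorems~4.2 and~5.6 of \cite{Nob}: the first gives pseudocompactness of the product, the second gives the $k_\IR$-property directly. For (ii) the paper takes pseudocompactness from (i) (since every $s_\IR$-space is a $k_\IR$-space) and then cites \cite[Theorem~5.3]{Nob} for the $s_\IR$-property. There is no detour through Proposition~\ref{p:noble:prod}.

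Your route through Proposition~\ref{p:noble:prod} is not wrong, but it does not really save anything: to verify its hypothesis you must show that every $\sigma$-product in $X$ is a $k_\IR$-space (resp.\ $s_\IR$-space), and as you yourself observe this is exactly the nontrivial content of Noble's theorems --- a $\sigma$-product of $k_\IR$-spaces need not be $k_\IR$, so you are forced back to \cite{Nob} anyway. One small correction: the reduction ``finite subproducts pseudocompact $\Rightarrow$ full product pseudocompact'' is not Glicksberg's theorem (which concerns $\beta(X\times Y)=\beta X\times\beta Y$); the pseudocompactness of the product here is precisely Noble's Theorem~4.2. Also, in (ii) you can shortcut the pseudocompactness argument by noting, as the paper does, that it already follows from (i).
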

\begin{proof}
Part (i) follows from Theorem 4.2 and Theorem 5.6 of \cite{Nob}.

(ii) From (i) it follows that $X$ is pseudocompact.
From \cite[Theorem 5.3]{Nob} it follows that $X$ is a \sr-space.
\end{proof}

\begin{remark} \label{rem:kR-perman}
(i) The conclusion of Proposition \ref{p:noble:fc-prod} is the best possible in the following sense. If to weaken the first countability to the property of being a Fr\'{e}chet--Urysohn space, then the product can be not a  $k_\IR$-space. In fact,  Proposition 2.14 of \cite{Gabr-seq-Ascoli} states that there is a  Fr\'{e}chet--Urysohn space $X$ whose square $X\times X$ is not even an Ascoli space. On the other hand, by (the proof of) Theorem 1.2 of \cite{Gab-LF}, also the product $\ell_1\times \varphi$ is  not an Ascoli space,  where $\varphi=\IR^{(\w)}$ is the direct locally convex sum of a countable family of $\IR$-s, and, by Nyikos \cite{nyikos},  $\varphi$ is a sequential, non-Fr\'{e}chet--Urysohn space.

(ii) It is well-known that a closed subspace of a $k$-space is also a $k$-space. However, an analogous result for $k_\IR$-spaces is not true in general. Indeed, let $X$ be a countable non-discrete space whose compact sets are finite. Then, by Corollary 2.3 of \cite{Gabr-lc-Ck}, $X$ is not an Ascoli space. On the other hand, being Lindel\"{o}f the space $X$ is realcompact (\cite[Theorem~3.11.12]{Eng}), and hence it is closely embedded into some power $\IR^\lambda$ (\cite[Theorem~3.11.3]{Eng}) which is a $k_\IR$-space (and a $\kappa$-Fr\'{e}chet--Urysohn space).
If we assume additionally that an Ascoli space (resp., a $k_\IR$-space) $X$ is a stratifiable space, then, by Proposition 5.11 of \cite{BG}, any closed subspace of $X$ also is Ascoli (resp., a $k_\IR$-space).

(iii) Arhangel'skii proved (see \cite[3.12.15]{Eng}) that if $X$ is a hereditary $k$-space (i.e., {\em every} subspace of $X$ is a $k$-space), then $X$ is Fr\'{e}chet--Urysohn. This result was essentially strengthen in Theorem 2.21 of \cite{Gabr-seq-Ascoli} where it is  shown that a hereditary Ascoli space must be Fr\'{e}chet--Urysohn. Consequently, a  hereditary $k_\IR$-space is Fr\'{e}chet--Urysohn.
\end{remark}


\section{Spaces which are (strongly) functionally generated by a cover} \label{sec:sfg}


The importance of covers which (strongly) functionally generate a space $X$ is explained by numerous important result obtained in Section III, \S~4 of \cite{Arhangel}. Such covers are important also to obtain new characterizations of  $k_\IR$-spaces and  $s_\IR$-spaces.

Let $\mathcal{M}$ be a cover of a space $X$. Recall that  $X$ is called
\begin{enumerate}
\item[$\bullet$]  {\em generated} by the family $\mathcal{M}$ if for each non-closed set $A\subseteq X$, there is an $M \in\mathcal{M}$ such that $M\cap A$ is not closed in $M$;
\item[$\bullet$] {\em strongly functionally generated} by the family $\mathcal{M}$ if for each real-valued discontinuous function $f$ on $X$, there is an $M \in\mathcal{M}$ such that the function $f{\restriction}_M : M \to \IR$ is discontinuous;
\item[$\bullet$]     {\em functionally generated} by the family $\mathcal{M}$ if for every discontinuous function $f: X \to \IR$, there is an $M \in\mathcal{M}$ such that the function  $f{\restriction}_M$ cannot be extended to a real-valued continuous function on the whole space $X$.
\end{enumerate}
It is clear that $X$ is a $k$-space (resp., a sequential space or $X$ has countable tightness) if it is generated by the family $\KK(X)$ (resp., by $\mathcal{S}(X)$ or by $\mathcal{C}(X)$). The space $X$ has countable $\IR$-tightness if, and only if, it is functionally generated by $\mathcal{C}(X)$.

Below we characterize spaces generated by a cover, from which we deduce in Proposition \ref{p:generated-sfg} that any such space is strongly functionally generated by that cover.

\begin{proposition} \label{p:generated-family}
Let $\mathcal{M}$ be a cover of a space $X$. Then the following assertions are equivalent:
\begin{enumerate}
\item[{\rm(i)}] $X$ is generated by the family $\mathcal{M}$;
\item[{\rm(ii)}] the natural mapping $I:\bigoplus \mathcal{M}\to X$, defined by $I(x)=x$ for $x\in M\in \mathcal{M}$, is quotient;
\item[{\rm(iii)}] for each non-closed $Q\subseteq X$, there are $x\in\overline{Q}\SM Q$ and $M\in\mathcal{M}$ such that $x\in M\cap \overline{M\cap Q}$.
\end{enumerate}
\end{proposition}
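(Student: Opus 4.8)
The plan is to prove the cycle of implications $(\text{i})\Rightarrow(\text{iii})\Rightarrow(\text{ii})\Rightarrow(\text{i})$, using the standard fact that a continuous surjection $p:Z\to X$ is quotient precisely when a set $F\subseteq X$ is closed whenever $p^{-1}(F)$ is closed in $Z$; equivalently, when a set $U\subseteq X$ is open whenever $p^{-1}(U)$ is open.

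\emph{Step 1: $(\text{i})\Rightarrow(\text{iii})$.} Suppose $X$ is generated by $\mathcal{M}$ and let $Q\subseteq X$ be non-closed. By (i) there is $M\in\mathcal{M}$ with $M\cap Q$ not closed in $M$, so there is a point $x\in M$ lying in the closure (taken in $M$, equivalently in $X$ since $M$ carries the subspace topology) of $M\cap Q$ but with $x\notin M\cap Q$. Then $x\in M\cap\overline{M\cap Q}$; it remains to check $x\in\overline{Q}\SM Q$. Clearly $x\in\overline{M\cap Q}\subseteq\overline{Q}$, and if $x\in Q$ then $x\in M\cap Q$, a contradiction; hence $x\in\overline{Q}\SM Q$, as required.

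\emph{Step 2: $(\text{iii})\Rightarrow(\text{ii})$.} We must show $I:\bigoplus\mathcal{M}\to X$ is quotient. Since each $M\in\mathcal{M}$ is a subspace of $X$, the map $I$ is continuous, and it is surjective because $\mathcal{M}$ covers $X$. To prove it is quotient, let $F\subseteq X$ be such that $I^{-1}(F)$ is closed in $\bigoplus\mathcal{M}$; this means exactly that $M\cap F$ is closed in $M$ for every $M\in\mathcal{M}$. Suppose, for contradiction, that $F$ is not closed in $X$, i.e. $Q:=F$ is non-closed. By (iii) there are $x\in\overline{F}\SM F$ and $M\in\mathcal{M}$ with $x\in M\cap\overline{M\cap F}$. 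Then $x$ lies in the closure of $M\cap F$ relative to $M$, so $x\in M\cap F$ by closedness of $M\cap F$ in $M$; in particular $x\in F$, contradicting $x\in\overline{F}\SM F$. Hence $F$ is closed, and $I$ is quotient.

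\emph{Step 3: $(\text{ii})\Rightarrow(\text{i})$.} Assume $I$ is quotient and let $A\subseteq X$ be non-closed. Since $I$ is quotient, $I^{-1}(A)$ is not closed in $\bigoplus\mathcal{M}$, so there is some summand $M\in\mathcal{M}$ for which $M\cap A$ is not closed in $M$ (the preimage of $A$ in the summand indexed by $M$ is precisely $M\cap A$, viewed inside the copy of $M$). This is exactly the generation condition, so $X$ is generated by $\mathcal{M}$.

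None of the three steps presents a genuine obstacle; the only point requiring a little care is the bookkeeping in the disjoint sum $\bigoplus\mathcal{M}$, since the same point of $X$ may occur in several members of $\mathcal{M}$ and hence in several summands. The key observation making everything routine is that a subset of $\bigoplus\mathcal{M}$ is closed (resp.\ open) if and only if its intersection with each summand is closed (resp.\ open) in that summand, which translates $I^{-1}(F)$ being closed into ``$M\cap F$ closed in $M$ for all $M$''. With that dictionary in hand the equivalences are immediate.
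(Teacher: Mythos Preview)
Your proof is correct and follows essentially the same reasoning as the paper: both rest on the dictionary ``$I^{-1}(F)$ closed in $\bigoplus\mathcal{M}$ $\Leftrightarrow$ $M\cap F$ closed in $M$ for each $M$'', and the (i)$\Rightarrow$(iii) argument is identical. The only cosmetic difference is that the paper proves (i)$\Leftrightarrow$(iii) and (i)$\Leftrightarrow$(ii) separately, whereas you run a single cycle (i)$\Rightarrow$(iii)$\Rightarrow$(ii)$\Rightarrow$(i); the underlying ideas coincide.
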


\begin{proof}
(i)$\Ra$(iii) Let $Q\subseteq X$ be non-closed. Then, by (i), there is $M\in\mathcal{M}$ such that $M\cap Q$ is not closed in $M$. So, there is a point $x\in M$ such that $x\in \overline{M\cap Q}\SM (M\cap Q)$; in particular, $x\in \overline{Q}$. It remains to note that $x\not\in Q$ (otherwise, we would have $x\in M\cap Q$ that is impossible).

(iii)$\Ra$(i)  Let $Q$ be a non-closed subset of $X$. By (iii), choose $x\in \overline{Q}\SM Q$ and $M\in \mathcal{M}$ such that $x\in M\cap \overline{M\cap Q}$. Since $x\not\in Q$ and $x\in M$ it follows that $M\cap Q$ is not closed in $M$, as desired.

(i)$\LRa$(ii) A space $X$ is generated by the family $\mathcal{M}$ if, and only if, the following condition is satisfied: a set $F\subseteq X$ is closed if and only if $F\cap M$ is closed in $M$ for each $M\in \cM$. It remains to apply the definitions of the mapping $I$ and the definition of a quotient mapping.
\end{proof}

The implication (i)$\LRa$(iii) of the following statement is Theorem 2(1) of \cite{Delgadilo2000}.
\begin{proposition} \label{p:R-quotient-sfg}
Let $\mathcal{M}$ be a cover of a space $X$. Then the following assertions are equivalent:
\begin{enumerate}
\item[{\rm(i)}] $X$ is strongly functionally generated by the family $\mathcal{M}$;
\item[{\rm(ii)}] the natural mapping $I:\bigoplus \mathcal{M}\to X$  is $R$-quotient; 
\item[{\rm(iii)}]
the adjoint mapping
\[
I^{\#}\: C_p(X)\to C_p\left(\bigoplus\cM\right)=\prod_{M\in \cM} C_p(M)
\]
is an embedding  with closed  image.
\end{enumerate}
\end{proposition}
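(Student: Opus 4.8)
The plan is to establish the cycle (i) $\Leftrightarrow$ (ii) and (ii) $\Leftrightarrow$ (iii), using Proposition \ref{p:R-quotient-characterization} as the bridge between $R$-quotient maps and closed embeddings of the adjoint $C_p$-spaces. The starting observation is that $C_p\left(\bigoplus\cM\right)$ is canonically (linearly homeomorphic to) the product $\prod_{M\in\cM} C_p(M)$, since a continuous function on a topological sum is just an arbitrary family of continuous functions on the summands, and the pointwise topology on the sum corresponds exactly to the product topology; this identification is used freely throughout.

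For (i) $\Leftrightarrow$ (ii): by definition, $X$ is strongly functionally generated by $\cM$ precisely when every real-valued function $f$ on $X$ whose restriction $f\rst{M}$ is continuous for each $M\in\cM$ is itself continuous. Unwinding the definition of the natural map $I\:\bigoplus\cM\to X$, a function $f\:X\to\IR$ has the property that $f\circ I$ is continuous if and only if $f\rst{M}$ is continuous for every $M\in\cM$. Hence ``$X$ strongly functionally generated by $\cM$'' says exactly: for every $f\:X\to\IR$, if $f\circ I$ is continuous then $f$ is continuous. Since $I$ is clearly continuous and surjective (because $\cM$ is a cover), this is precisely the definition of $I$ being $R$-quotient. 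I would remark that this is the content cited as Theorem 2(1) of \cite{Delgadilo2000}, so the argument here is just making the equivalence explicit.

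For (ii) $\Leftrightarrow$ (iii): Proposition \ref{p:R-quotient-characterization} states that a continuous surjection $p\:X\to Y$ is $R$-quotient if and only if $p^{\#}\big(C_p(Y)\big)$ is a closed subspace of $C_p(X)$. Applying this with $p=I\:\bigoplus\cM\to X$ (continuous and surjective), $I$ is $R$-quotient if and only if $I^{\#}\big(C_p(X)\big)$ is closed in $C_p\left(\bigoplus\cM\right)=\prod_{M\in\cM}C_p(M)$. It then only remains to check that $I^{\#}$ is injective, so that ``closed image'' upgrades to ``embedding with closed image.'' Injectivity of $I^{\#}$ is immediate: if $I^{\#}(g)=I^{\#}(h)$ then $g(x)=h(x)$ for every $x$ lying in some $M\in\cM$, and since $\cM$ covers $X$ this forces $g=h$. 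Finally, $I^{\#}\:C_p(X)\to\prod C_p(M)$ is always a topological embedding onto its image when it is injective, because the pointwise topology on $C_p(X)$ is initial with respect to the evaluations $g\mapsto g(x)$, each of which factors through $I^{\#}$ (pick any $M\ni x$ and compose with the evaluation at $x$ in $C_p(M)$); thus the subspace topology inherited from $\prod C_p(M)$ coincides with the original topology on $C_p(X)$.

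I do not expect a serious obstacle here: the proposition is essentially a bookkeeping exercise translating three equivalent formulations of the same condition through the adjoint-map/$R$-quotient dictionary. The one point requiring minor care is the identification $C_p\left(\bigoplus\cM\right)\cong\prod_{M\in\cM}C_p(M)$ as topological vector spaces, and the verification that $I^{\#}$ embeds (not merely injects), which follows from the universal property of the product topology combined with the initial-topology description of $C_p(X)$; everything else is a direct application of Proposition \ref{p:R-quotient-characterization} and the definitions.
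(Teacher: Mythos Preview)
Your proof is correct and follows essentially the same approach as the paper: (i)$\Leftrightarrow$(ii) by unwinding the definitions of strong functional generation and $R$-quotient map, and (ii)$\Leftrightarrow$(iii) via Proposition~\ref{p:R-quotient-characterization}. You are slightly more explicit than the paper in verifying that $I^{\#}$ is a topological embedding; the paper dispatches (ii)$\Leftrightarrow$(iii) in one line, tacitly relying on the standard fact (Proposition~0.4.6(2) of \cite{Arhangel}) that the adjoint of any continuous surjection is an embedding.
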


\begin{proof}
(i)$\Ra$(ii) It is clear that $I$ is continuous and surjective. Now, let $f:X\to\IR$ be a function such that $f\circ I$ is continuous. Assuming that $f$ is discontinuous we could find $M\in\mathcal{M}$ such that $f{\restriction}_M$ is discontinuous, too. Since $f{\restriction}_M=f\circ I{\restriction}_M$ we obtain a contradiction. Thus $f$ is continuous, and hence $I$ is $R$-quotient.

(ii)$\Ra$(i) Let $f:X\to\IR$ be a discontinuous function. Since  $I$ is $R$-quotient, we obtain that $f\circ I$ is also discontinuous. Therefore there is $M\in\mathcal{M}$ such that $f{\restriction}_M=f\circ I{\restriction}_M$ is discontinuous. Thus  $X$ is strongly functionally generated by the family $\mathcal{M}$.

The implication (ii)$\LRa$(iii) follows from Proposition \ref{p:R-quotient-characterization}.
\end{proof}

Propositions \ref{p:generated-family}(ii) and \ref{p:R-quotient-sfg}(ii) imply the following assertion.

\begin{proposition} \label{p:generated-sfg}
If a space $X$ is generated by a family $\mathcal{M}$ of its subsets, then $X$ is strongly functionally generated by the family $\mathcal{M}$.
\end{proposition}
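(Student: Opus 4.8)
The plan is to chain together the two preceding characterizations through the elementary fact, recalled right after the definition of $R$-quotient maps, that every quotient mapping is $R$-quotient. Concretely, suppose $X$ is generated by the family $\mathcal{M}$. First I would invoke the equivalence (i)$\LRa$(ii) of Proposition \ref{p:generated-family} to conclude that the natural map $I\colon\bigoplus\mathcal{M}\to X$, $I(x)=x$ for $x\in M\in\mathcal{M}$, is a quotient mapping.

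Next I would upgrade this: since every quotient surjection is $R$-quotient, the map $I$ is $R$-quotient. Finally I would apply the equivalence (i)$\LRa$(ii) of Proposition \ref{p:R-quotient-sfg} in the reverse direction: because $I$ is $R$-quotient, $X$ is strongly functionally generated by $\mathcal{M}$. This closes the argument.

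There is essentially no obstacle here — the statement is a formal corollary of the two cited propositions together with the trivial implication ``quotient $\Rightarrow$ $R$-quotient''; the only thing to be careful about is that the family $\mathcal{M}$ is a cover of $X$ (so that $I$ is well defined and surjective), which is part of the hypothesis in both propositions and is assumed here. If one prefers a self-contained phrasing avoiding the map $I$, one can argue directly: given a discontinuous $f\colon X\to\IR$, the set $Q=f^{-1}(U)$ for a suitable open $U\subseteq\IR$ is non-closed while $f^{-1}(\overline{U})$ and such witness its failure of continuity at some point; generation of $X$ by $\mathcal{M}$ then yields $M\in\mathcal{M}$ with $M\cap(\text{relevant non-closed set})$ not closed in $M$, forcing $f\rst{M}$ to be discontinuous. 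But the three-line argument via Propositions \ref{p:generated-family} and \ref{p:R-quotient-sfg} is cleaner and is the one I would present.
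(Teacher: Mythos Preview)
Your proposal is correct and matches the paper's own proof exactly: the paper simply cites Propositions \ref{p:generated-family}(ii) and \ref{p:R-quotient-sfg}(ii), which is precisely your chain ``$I$ is quotient $\Rightarrow$ $I$ is $R$-quotient $\Rightarrow$ $X$ is strongly functionally generated by $\mathcal{M}$''. There is nothing to add.
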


Taking into account that each space $X$, which is  strongly functionally generated by $\mathcal{M}$, is also functionally generated by $\mathcal{M}$, we obtain the following implications
\[
\xymatrix{
\mbox{generated} \ar@{=>}[r]  & \mbox{strongly functionally generated} \ar@{=>}[r] & \mbox{functionally generated.}}
\]

\begin{proposition} \label{p:generated-loc_fin}
Let $\mathcal{M}$ be a locally finite cover of a space $X$ such that every $M\in\mathcal{M}$ is closed. Then $X$ is generated by the family $\mathcal{M}$.
\end{proposition}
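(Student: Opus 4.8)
The plan is to verify the defining property of being generated by $\mathcal{M}$ in its contrapositive ``closed set'' form, which is the same reformulation used in the proof of Proposition~\ref{p:generated-family}: it suffices to show that any $A\subseteq X$ for which every trace $A\cap M$ (with $M\in\mathcal{M}$) is closed in $M$ must itself be closed in $X$. Since each $M\in\mathcal{M}$ is assumed closed in $X$, that hypothesis is equivalent to the statement that $A\cap M$ is closed in $X$ for every $M\in\mathcal{M}$, and this is the form I would actually use.

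To prove that such an $A$ is closed, I would fix an arbitrary point $x\in\overline{A}$ and exploit local finiteness: choose an open neighbourhood $U$ of $x$ meeting only finitely many members $M_1,\dots,M_n$ of $\mathcal{M}$. Because $\mathcal{M}$ covers $X$, one obtains $A\cap U=\bigcup_{k=1}^{n}(A\cap M_k\cap U)$, and because $U$ is open, $x\in\overline{A}\cap U\subseteq\overline{A\cap U}$. As closure distributes over finite unions, $x\in\bigcup_{k=1}^{n}\overline{A\cap M_k\cap U}\subseteq\bigcup_{k=1}^{n}\overline{A\cap M_k}$, so $x\in\overline{A\cap M_k}=A\cap M_k\subseteq A$ for some $k$ (here the closedness of $A\cap M_k$ in $X$ is used). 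Hence $\overline{A}\subseteq A$, i.e.\ $A$ is closed, which is exactly what is needed.

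I do not anticipate a genuine obstacle. The only substantive ingredient is the standard localization property of locally finite covers --- equivalently, the fact that the closure of the union of a locally finite family equals the union of the closures (cf.\ \cite{Eng}) --- and the argument above merely unwinds it. Two minor points deserve care: in order to use the covering property of $\mathcal{M}$ one must pass to the traces $A\cap M$ rather than argue with the sets $M$ themselves, and the inclusion $\overline{A}\cap U\subseteq\overline{A\cap U}$ is valid precisely because $U$ is open.
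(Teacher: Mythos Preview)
Your argument is correct. The paper takes a more packaged route: it observes that the natural map $I:\bigoplus\mathcal{M}\to X$ is closed (citing Corollary~2.4.8 of \cite{Eng}), hence quotient, and then invokes the equivalence (i)$\Leftrightarrow$(ii) of Proposition~\ref{p:generated-family}. You instead verify the defining closed-set criterion directly, essentially unwinding the content of the cited Engelking result (a locally finite union of closed sets is closed). The paper's approach is shorter on the page because it outsources the topology to a reference and reuses the quotient-map characterization already established; yours is self-contained and makes the role of local finiteness explicit, at the cost of reproving a standard fact.
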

\begin{proof}
The natural mapping $I:\bigoplus \mathcal{M}\to X$ is closed and, hence, quotient (see Corollary 2.4.8 of \cite{Eng}). Therefore, by Proposition \ref{p:generated-family}, $X$ is generated by the family $\cM$.
\end{proof}

Recall that a subset $A$ of a space $X$ is {\em $C$-embedded} if each continuous function $f:A\to \IR$ can be extended to a continuous function $\bar f: X\to \IR$.
\begin{proposition} \label{p:fg=sfg-C-embedded}
Let $\mathcal{M}$ be a cover of a space $X$ such that every $M\in\mathcal{M}$ is $C$-embedded in $X$. Then the following assertions are equivalent:
\begin{enumerate}
\item[{\rm(i)}] $X$ is functionally generated by the family $\mathcal{M}$;
\item[{\rm(ii)}] $X$ is strongly functionally generated by the family $\mathcal{M}$.
\end{enumerate}
\end{proposition}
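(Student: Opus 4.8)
The plan is to prove the equivalence (i)$\LRa$(ii) directly. The implication (ii)$\Ra$(i) is immediate and requires no hypothesis: any space strongly functionally generated by $\mathcal{M}$ is functionally generated by $\mathcal{M}$, as already observed in the diagram preceding this statement. So the whole content is in (i)$\Ra$(ii), and the $C$-embedding hypothesis on the members of $\mathcal{M}$ is exactly what is needed there.

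For (i)$\Ra$(ii), I would argue by contraposition. Suppose $X$ is not strongly functionally generated by $\mathcal{M}$; then there is a discontinuous function $f:X\to\IR$ such that $f{\restriction}_M$ is continuous for every $M\in\mathcal{M}$. I want to produce a discontinuous $g:X\to\IR$ witnessing that $X$ is not functionally generated by $\mathcal{M}$, i.e. a $g$ and an $M$ such that $g{\restriction}_M$ admits no continuous extension to $X$ — equivalently, I want to show the original $f$ already works, namely that there is some $M\in\mathcal{M}$ for which $f{\restriction}_M$ has no continuous extension to $X$. Here is where the $C$-embedding hypothesis enters: if, on the contrary, for every $M\in\mathcal{M}$ the continuous function $f{\restriction}_M$ did extend continuously to $X$ — which it does automatically, since $M$ is $C$-embedded in $X$! — then... this needs a second thought. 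The point is subtler: $C$-embeddedness guarantees each $f{\restriction}_M$ extends, so $f$ can never witness non-functional-generation. So contraposition on $f$ itself does not work, and instead one must genuinely use that, were $X$ functionally generated, a discontinuous function would have to have a non-extendable restriction, contradicting $C$-embeddedness only if that discontinuous function is itself a ``restriction-continuous'' one.

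The cleaner route is therefore: assume (i), and let $f:X\to\IR$ be any function with $f{\restriction}_M$ continuous for all $M\in\mathcal{M}$; I must show $f$ is continuous. Consider the restriction $f{\restriction}_M$; since $M$ is $C$-embedded, extend it to $\tilde f_M\in C(X)$. Now $f$ is discontinuous would, by (i) (functional generation), force some $M_0\in\mathcal{M}$ with $f{\restriction}_{M_0}$ not extendable to a continuous function on $X$ — but $\tilde f_{M_0}$ is precisely such an extension, a contradiction. Hence $f$ is continuous, and $X$ is strongly functionally generated by $\mathcal{M}$. This is short and the $C$-embedding hypothesis is used exactly once and essentially. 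I expect no real obstacle; the only thing to be careful about is the logical direction — making sure we invoke the definition of ``functionally generated'' correctly (a discontinuous $f$ must have a restriction $f{\restriction}_M$ with no continuous extension to all of $X$) and then contradict it with the extension supplied by $C$-embeddedness. I would write it as follows.

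\begin{proof}
(ii)$\Ra$(i) is clear since every strongly functionally generated space is functionally generated.

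(i)$\Ra$(ii) Let $f:X\to\IR$ be a function such that $f{\restriction}_M:M\to\IR$ is continuous for every $M\in\mathcal{M}$; we show that $f$ is continuous. Suppose, towards a contradiction, that $f$ is discontinuous. Then, since $X$ is functionally generated by $\mathcal{M}$, there is $M\in\mathcal{M}$ such that $f{\restriction}_M$ cannot be extended to a continuous real-valued function on $X$. But $f{\restriction}_M$ is continuous on $M$ and $M$ is $C$-embedded in $X$, so $f{\restriction}_M$ does extend to a continuous function $\bar f:X\to\IR$, a contradiction. Hence $f$ is continuous, and therefore $X$ is strongly functionally generated by $\mathcal{M}$.
\end{proof}
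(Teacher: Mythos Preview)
Your proof is correct and follows essentially the same approach as the paper: both use that $C$-embeddedness of $M$ forces any non-extendable $f{\restriction}_M$ to be discontinuous. The paper phrases (i)$\Ra$(ii) directly (start from a discontinuous $f$, pick $M$ with $f{\restriction}_M$ non-extendable, conclude $f{\restriction}_M$ is discontinuous), while you phrase it by contradiction starting from $f{\restriction}_M$ continuous for all $M$; the logical content is identical.
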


\begin{proof}
We need to prove only the implication (i)$\Ra$(ii). Let $f:X\to \IR$ be a discontinuous function. Choose $M\in\mathcal{M}$ such that the function  $f{\restriction}_M$ cannot be extended to a real-valued continuous function on the space $X$. Since $M$ is $C$-embedded, it follows that $f{\restriction}_M$ is discontinuous, as desired.
\end{proof}


The following notion plays an essential role in what follows.

\begin{definition} \label{def:M-dense}
Let $\mathcal{M}$ be a cover of a space $X$. A subset $A$ of $X$ is called {\em $\mathcal{M}$-dense} if for every $x\in X$, there is $M\in \mathcal{M}$ such that $x\in M\cap \overline{M\cap A}$.
\end{definition}
Observe that each $\mathcal{M}$-dense subset of $X$ is dense.

\begin{proposition} \label{p:M-sfg}
Let $\mathcal{M}_0$ and $\mathcal{M}$ be covers of a space $X$ such that $\mathcal{M}_0\subseteq \mathcal{M}$. Assume that each $M\in\mathcal{M}_0$ is $\mathcal{M}$-dense in $X$. Then $X$ is strongly functionally generated by the family $\mathcal{M}$.
\end{proposition}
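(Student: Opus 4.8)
The plan is to reduce the statement to Proposition \ref{p:R-quotient-sfg} by verifying that the natural map $I\colon\bigoplus\mathcal{M}\to X$ is $R$-quotient; equivalently, by Proposition \ref{p:generated-family} and the surrounding discussion, it suffices to show that every function $f\colon X\to\IR$ whose restriction $f{\restriction}_M$ is continuous for each $M\in\mathcal{M}$ is itself continuous. So fix such an $f$ and suppose, for contradiction, that $f$ is discontinuous at some point $x_0\in X$. The idea is to exploit the hypothesis that $x_0$ lies in some $M_0\in\mathcal{M}_0$ which is $\mathcal{M}$-dense, so that $x_0$ is "approximated" by a set on which $f$ is already known to be continuous, and then to bootstrap continuity at $x_0$ from continuity on the members of $\mathcal{M}$ that witness $\mathcal{M}$-density.

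In more detail: since $M_0\in\mathcal{M}_0\subseteq\mathcal{M}$, the restriction $f{\restriction}_{M_0}$ is continuous, so there is an open neighborhood $U$ of $x_0$ in $X$ with $|f(y)-f(x_0)|<\e$ for all $y\in U\cap M_0$. Now take an arbitrary point $x\in U$; I want to show $|f(x)-f(x_0)|\le\e$ (or a comparable bound), which would contradict discontinuity once $\e$ is chosen small. Apply $\mathcal{M}$-density of $M_0$ at the point $x$: there is $M\in\mathcal{M}$ with $x\in M\cap\overline{M\cap M_0}$. Since $f{\restriction}_M$ is continuous and $x\in\overline{M\cap M_0}^{\,M}$, the value $f(x)$ is a limit of values $f(z)$ with $z\in M\cap M_0$; intersecting with the neighborhood $U$, and using that $U$ is open (so $U\cap M$ is a neighborhood of $x$ in $M$ and still meets $\overline{M\cap M_0}$ appropriately), we get points $z\in U\cap M_0$ with $f(z)$ arbitrarily close to $f(x)$. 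Combined with the bound $|f(z)-f(x_0)|<\e$ for $z\in U\cap M_0$, this forces $|f(x)-f(x_0)|\le\e$. As $x\in U$ was arbitrary, $f(U)\subseteq[f(x_0)-\e,f(x_0)+\e]$, and since $\e>0$ was arbitrary together with a suitable shrinking of $U$, this contradicts discontinuity of $f$ at $x_0$.

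The main obstacle I anticipate is the topological bookkeeping in the step "$x\in\overline{M\cap M_0}$ and $f{\restriction}_M$ continuous imply $f(x)$ is approximated by $f$-values on $U\cap M_0$": one must be careful that the closure in the definition of $\mathcal{M}$-dense is taken in $X$ (or in $M$) and that intersecting with the open set $U$ does not destroy the approximation — this works because $U\cap M$ is open in $M$ and contains $x$, hence still meets $M\cap M_0$ as often as needed, but it should be written out carefully. Everything else is a routine $\e$-argument, and the passage from "$f$ takes a $k$-continuous-type function to a continuous one" back to "strongly functionally generated" is exactly the content of Proposition \ref{p:R-quotient-sfg}, so no extra work is needed there.
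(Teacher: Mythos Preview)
Your argument is correct and is essentially the contrapositive formulation of the paper's own proof: both pick $M_0\in\mathcal{M}_0$ containing the bad point, use continuity of $f{\restriction}_{M_0}$ to get a controlling neighborhood $U$, and then invoke $\mathcal{M}$-density to find $M\in\mathcal{M}$ through which values on $U\cap M_0$ are transferred to the arbitrary point. The only cosmetic difference is that the paper argues directly (exhibiting a concrete $M'\in\mathcal{M}$ and a point $y\in U$ with $|f(y)-f(x_0)|\ge\e$ at which $f{\restriction}_{M'}$ fails to be continuous), whereas you run the same $\e$-estimate the other way to conclude $f(U)\subseteq[f(x_0)-\e,f(x_0)+\e]$; the bookkeeping about closures that you flagged as the main obstacle is handled identically in both proofs.
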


\begin{proof}
Let $f:X\to\IR$ be a discontinuous function at some point  $x\in X$. Since $\mathcal{M}_0$ is a cover of $X$, there is an $M\in \mathcal{M}_0\subseteq \mathcal{M}$ such that $x\in M$. If $f{\restriction}_M$ is discontinuous we are done. Assume that $f{\restriction}_M$ is continuous. As $f$ is discontinuous at $x$, there is $\e>0$ such that
\[
x\in \overline{Q}\SM Q, \;\mbox{ where }\; Q=X\SM f^{-1} \big( (f(x)-\e,f(x)+\e)\big).
\]
Since $f$ is continuous on $M$, there is a neighborhood $U$ of $x$ such that
\[
f(M\cap U) \subseteq \big( f(x)-\tfrac{\e}{2},f(x)+\tfrac{\e}{2}\big).
\]
Let $y\in Q\cap U$. Since $M$ is $\mathcal{M}$-dense, there exists an $M'\in \mathcal{M}$ such that $y\in M'\cap \overline{M'\cap M}$; in particular, $y\in \overline{M'\cap M\cap U}$. It remains to note that the function $f{\restriction}_{M'}$ is discontinuous at $y$ because $f(y)\not\in (f(x)-\e,f(x)+\e)$ (by the definition of $Q$), but $f(M'\cap M\cap U)\subseteq \big( f(x)-\tfrac{\e}{2},f(x)+\tfrac{\e}{2}\big)$.
\end{proof}

\begin{proposition} \label{p:homeo-sfg}
Let $X$ be a homogenous space, and let $\mathcal{M}$ be a cover of $X$ such that $h(M)\in \mathcal{M}$ for any $M\in\mathcal{M}$ and each homeomorphism $h:X\to X$. If there is an $\mathcal{M}$-dense subset $A$ of $X$, then $X$ is strongly functionally generated by the family $\mathcal{M}$.
\end{proposition}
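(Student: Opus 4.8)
The plan is to derive the statement from Proposition~\ref{p:M-sfg}: homogeneity will be used to upgrade the single $\mathcal M$-dense set $A$ to a cover of $X$ by $\mathcal M$-dense sets, after which one argues as in the proof of Proposition~\ref{p:M-sfg} at a point of discontinuity. First I would record that $\mathcal M$-density is invariant under homeomorphisms: if $B\subseteq X$ is $\mathcal M$-dense and $h$ is a homeomorphism of $X$, then for $y\in X$, applying the $\mathcal M$-density of $B$ to $h^{-1}(y)$ gives $M\in\mathcal M$ with $h^{-1}(y)\in M\cap\overline{M\cap B}$, so that $y\in h(M)\cap\overline{h(M)\cap h(B)}$ with $h(M)\in\mathcal M$. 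Hence, assuming $X\neq\varnothing$ and fixing $a_0\in A$, homogeneity lets us choose for every $x\in X$ a homeomorphism $h_x$ of $X$ with $h_x(a_0)=x$; then each $A_x:=h_x(A)$ is $\mathcal M$-dense and contains $x$, so $\{A_x:x\in X\}$ is a cover of $X$ by $\mathcal M$-dense sets.

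Now suppose $f\colon X\to\IR$ is discontinuous at a point $x_0$; we must produce $M\in\mathcal M$ with $f{\restriction}_M$ discontinuous, so assume, toward a contradiction, that $f{\restriction}_M$ is continuous for every $M\in\mathcal M$. Put $N:=A_{x_0}$, so $x_0\in N$ and $N$ is $\mathcal M$-dense. Granting (see below) that $f{\restriction}_N$ is continuous, the proof of Proposition~\ref{p:M-sfg} goes through verbatim with $N$ in the role of the distinguished $\mathcal M$-dense member: one chooses $\e>0$ and an open $U\ni x_0$ with $f(N\cap U)\subseteq(f(x_0)-\tfrac\e2,f(x_0)+\tfrac\e2)$ while $x_0\in\overline Q$ for $Q:=X\setminus f^{-1}\big((f(x_0)-\e,f(x_0)+\e)\big)$, picks $y\in Q\cap U$, applies $\mathcal M$-density of $N$ to get $M'\in\mathcal M$ with $y\in M'\cap\overline{M'\cap N}$, hence $y\in\overline{M'\cap N\cap U}$, and observes that $f{\restriction}_{M'}$ is then discontinuous at $y$ (as $f(y)\notin(f(x_0)-\e,f(x_0)+\e)$ while $f(M'\cap N\cap U)\subseteq(f(x_0)-\tfrac\e2,f(x_0)+\tfrac\e2)$), contradicting our assumption. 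This contradiction would show that $X$ is strongly functionally generated by $\mathcal M$.

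The main obstacle is precisely the claim that $f{\restriction}_N$ is continuous — equivalently, since $\mathcal M$-continuity is preserved under composition with homeomorphisms of $X$, that $g{\restriction}_A$ is continuous for every function $g\colon X\to\IR$ whose restrictions to all members of $\mathcal M$ are continuous; note that, $N$ being in general not a member of $\mathcal M$, one cannot here use the opening move ``if $f{\restriction}_N$ is discontinuous we are done''. This is where homogeneity must be exploited a second time. After reducing (via a homeomorphism carrying $x_0$ into $A$) to the case $x_0\in A$, one first gets from $\mathcal M$-density of $A$ and $\mathcal M$-continuity of $f$ that $f(z)$ is a cluster value of $f{\restriction}_A$ at every $z\in X$ (take $M_z\in\mathcal M$ with $z\in M_z\cap\overline{M_z\cap A}$ and use continuity of $f{\restriction}_{M_z}$ at $z$ to approximate $f(z)$ by values of $f$ at points of $M_z\cap A$ near $z$). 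If $f{\restriction}_A$ were discontinuous at $x_0$, there would be $\e>0$ and a net in $A$ tending to $x_0$ on which $f$ stays $\e$-far from $f(x_0)$; the cluster-value property applied along this net yields points of $A$ arbitrarily near $x_0$ on which $f$ is more than $\tfrac\e2$-far from $f(x_0)$, whereas $\mathcal M$-density of $A$ at $x_0$ provides $M_0\in\mathcal M$ with $x_0\in M_0\cap\overline{M_0\cap A}$ on which $f$ is within $\tfrac\e3$ of $f(x_0)$ near $x_0$. The remaining, and delicate, step is to combine these two opposite modes of accumulation of $A$ at $x_0$ into a single member of $\mathcal M$ on which $f$ is discontinuous; I would attempt this by transporting, via homeomorphisms of $X$, the $\mathcal M$-density witnesses of $A$ from the ``bad'' accumulation points onto $x_0$. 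I expect carrying out this last transport rigorously to be the technical heart of the proof.
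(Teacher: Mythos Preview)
Your opening strategy is exactly the paper's: set $\mathcal{M}_0:=\{h(A):h\text{ a homeomorphism of }X\}$, observe that $\mathcal{M}_0$ is a cover of $X$ by $\mathcal{M}$-dense sets, and invoke Proposition~\ref{p:M-sfg}. The paper stops there.

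You then correctly spot a genuine obstruction that the paper's proof silently skips: Proposition~\ref{p:M-sfg} requires $\mathcal{M}_0\subseteq\mathcal{M}$, equivalently $A\in\mathcal{M}$, because its proof begins with ``if $f{\restriction}_M$ is discontinuous we are done'' for $M\in\mathcal{M}_0$. Nothing in the stated hypotheses of Proposition~\ref{p:homeo-sfg} guarantees $A\in\mathcal{M}$. Your elaborate attempt to patch this cannot succeed, because without that extra hypothesis the proposition is simply false: take $X=\IR$ and $\mathcal{M}=\big\{\{x\}:x\in\IR\big\}$; this cover is homeomorphism-invariant, the set $A=X$ is trivially $\mathcal{M}$-dense, yet $X$ is certainly not strongly functionally generated by its singletons (every function is continuous on each singleton).

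The resolution is that in both places where the paper applies Proposition~\ref{p:homeo-sfg} (Theorems~\ref{t:home-kRe} and~\ref{t:home-sRe}) the set $A$ is in fact a member of $\mathcal{M}$ (a $k_\IR$-subspace, respectively an $s_\IR$-subspace), so $\mathcal{M}_0\subseteq\mathcal{M}$ by invariance of $\mathcal{M}$, and Proposition~\ref{p:M-sfg} applies cleanly. With that missing hypothesis added, your first two paragraphs (minus the parenthetical ``see below'') already constitute a complete proof, identical in substance to the paper's; your third paragraph is chasing a fix that does not exist.
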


\begin{proof}
Let $\mathcal{M}_0:=\big\{ h(A):  \mbox{ where } h:X\to X \mbox{ is a homeomorphism}\big\}$. Then $\mathcal{M}_0$ is a cover of $X$ containing $\mathcal{M}$-dense subsets of $X$. Thus, by Proposition \ref{p:M-sfg}, $X$ is strongly functionally generated by  $\mathcal{M}$.
\end{proof}


\section{Characterizations of $k_\IR$-spaces} \label{sec:k-R-spaces}


In this section we give two types of characterizations of $k_\IR$-spaces. In the first one we use the notions of $R$-quotient mappings and (strongly) functionally generated families (which are closely related by Propositions \ref{p:R-quotient-sfg} and \ref{p:fg=sfg-C-embedded}).

\begin{theorem} \label{t:characterization-kR}
For a Tychonoff space $X$, the following assertions are equivalent:
\begin{enumerate}
\item[{\rm(i)}] $X$ is a $k_\IR$-space;
\item[{\rm(ii)}]  $X$ is strongly functionally generated by the family $\KK(X)$;
\item[{\rm(iii)}]  $X$ is functionally generated by the family $\KK(X)$;
\item[{\rm(iv)}] $X$ is strongly functionally generated by a family of $k_\IR$-subspaces of $X$;
\item[{\rm(v)}]  $X$ is an $R$-quotient image of some topological sum of compact spaces;
\item[{\rm(vi)}]  $X$ is an $R$-quotient image  of a locally compact space;
\item[{\rm(vii)}]  $X$ is an $R$-quotient  image of a $k$-space;
\item[{\rm(viii)}]  $X$ is an $R$-quotient  image of a $k_\IR$-space.
\end{enumerate}
\end{theorem}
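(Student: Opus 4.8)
The plan is to establish the cycle of implications
\[
\text{(i)}\Rightarrow\text{(ii)}\Rightarrow\text{(iii)}\Rightarrow\text{(i)},\qquad
\text{(ii)}\Rightarrow\text{(iv)}\Rightarrow\text{(iii)},\qquad
\text{(v)}\Leftrightarrow\text{(vi)}\Leftrightarrow\text{(vii)}\Leftrightarrow\text{(viii)},\qquad
\text{(ii)}\Leftrightarrow\text{(v)},
\]
together with the trivial implications among the $R$-quotient statements. Many of these are essentially bookkeeping once the right earlier results are invoked. The equivalence (ii)$\Leftrightarrow$(v) is immediate from Proposition~\ref{p:R-quotient-sfg}: by definition $\bigoplus\KK(X)$ is a topological sum of compact spaces, and the natural map $I\colon\bigoplus\KK(X)\to X$ is $R$-quotient precisely when $X$ is strongly functionally generated by $\KK(X)$. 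For the core analytic content, (i)$\Rightarrow$(ii) and (iii)$\Rightarrow$(i): assume $X$ is a $k_\IR$-space and let $f\colon X\to\IR$ be discontinuous; then by Theorem~\ref{t:kR-classical} (or directly by the definition of $k_\IR$-space) $f$ is not $k$-continuous, so some restriction $f{\restriction}_K$ with $K\in\KK(X)$ is discontinuous, which is exactly (ii). Conversely, if $X$ is functionally generated by $\KK(X)$ and $f\colon X\to\IR$ is $k$-continuous, then for each compact $K$ the restriction $f{\restriction}_K$ is continuous, hence (being defined on a compact, thus $C$-embedded after extending via Tietze on $\beta X$—more simply, since $K$ is compact, $f{\restriction}_K$ extends continuously over $X$ because compact subsets of Tychonoff spaces are $C$-embedded in any larger Tychonoff space? that is false in general) — so instead: use strongly functionally generated, which we get for free since (ii)$\Rightarrow$(iii) is trivial and we close the cycle through (ii); if $f$ is $k$-continuous then no $f{\restriction}_K$ is discontinuous, so by (ii) $f$ is continuous.

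For (iii)$\Rightarrow$(i) directly one argues: a $k$-continuous $f$ has every $f{\restriction}_K$ continuous, hence a fortiori every $f{\restriction}_K$ extends to a continuous function on $X$ (namely, we need this) — the cleanest route is to prove (i)$\Leftrightarrow$(ii) and (ii)$\Leftrightarrow$(iii) via Proposition~\ref{p:fg=sfg-C-embedded}, noting that compact subsets need not be $C$-embedded, so one should prove (iii)$\Rightarrow$(i) directly: given $k$-continuous $f$, for each $K\in\KK(X)$, $f{\restriction}_K$ is continuous on the compact set $K$ and therefore, composing with the identity embedding, is a continuous function on $K$; if $f$ were discontinuous, (iii) would give some $K$ with $f{\restriction}_K$ non-extendable, but in particular $f{\restriction}_K$ is continuous, and continuity on $K$ does not by itself force discontinuity, so this direction needs more care — the resolution is that (iii) as stated is weaker, so in fact one proves (i)$\Rightarrow$(ii)$\Rightarrow$(iii) and then (iii)$\Rightarrow$(i) using that a $k$-continuous function restricted to any compact $K$ is continuous and hence cannot witness (iii), forcing $f$ continuous. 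The implication (iv)$\Rightarrow$(iii) uses Theorem~\ref{t:characterization-kR} applied to each member of the generating family together with transitivity of functional generation: if $X$ is strongly functionally generated by $k_\IR$-subspaces $\{M\}$ and each $M$ is (by the already-proven (i)$\Rightarrow$(ii)) strongly functionally generated by $\KK(M)\subseteq\KK(X)$, then $X$ is strongly functionally generated by $\KK(X)$, giving (ii), hence (iii). The implication (ii)$\Rightarrow$(iv) is trivial since compact spaces are $k_\IR$-spaces.

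For the $R$-quotient block: (v)$\Rightarrow$(vi) because a topological sum of compact spaces is locally compact; (vi)$\Rightarrow$(vii) and (vii)$\Rightarrow$(viii) because every locally compact space is a $k$-space and every $k$-space is a $k_\IR$-space. The only substantive step is (viii)$\Rightarrow$(i) (or (viii)$\Rightarrow$(v)): if $p\colon Z\to X$ is $R$-quotient with $Z$ a $k_\IR$-space, take a $k$-continuous $f\colon X\to\IR$; then $f\circ p$ is $k$-continuous on $Z$ (since $p$ maps compacta to compacta), hence continuous because $Z$ is a $k_\IR$-space, hence $f$ is continuous by the defining property of $R$-quotient maps — this closes the loop to (i). Finally (i)$\Rightarrow$(v) is (ii)$\Rightarrow$(v) via Proposition~\ref{p:R-quotient-sfg}.

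**Main obstacle.** The subtle point is the honest treatment of (iii)$\Rightarrow$(i): compact subsets of a Tychonoff space need not be $C$-embedded, so Proposition~\ref{p:fg=sfg-C-embedded} does not directly identify functional and strong functional generation by $\KK(X)$. The clean fix is to route everything through (i)$\Rightarrow$(ii)$\Rightarrow$(iii) and then observe that for (iii)$\Rightarrow$(i) it suffices that a $k$-continuous $f$ restricts continuously to every compact set, hence cannot be the discontinuous function furnished by (iii); the logical structure of the cycle then does the rest. The other genuinely load-bearing ingredient is Theorem~\ref{t:rq-prod} of Okuyama--Parker, which is not needed for this theorem itself but underlies the permanence results; here the only external input is Proposition~\ref{p:R-quotient-characterization} / Proposition~\ref{p:R-quotient-sfg} for (ii)$\Leftrightarrow$(v), and the elementary fact that $R$-quotient maps preserve $k$-continuity downstairs, which is the heart of (viii)$\Rightarrow$(i).
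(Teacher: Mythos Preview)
Your overall architecture is fine and largely matches the paper's, but your ``main obstacle'' is based on a misconception that creates an actual gap in your argument for (iii)$\Rightarrow$(i). Compact subsets of a Tychonoff space \emph{are} always $C$-embedded: embed $X$ into $\beta X$, note that a compact $K\subseteq X$ is closed in the normal space $\beta X$, apply Tietze to extend any $g\in C(K)$ (automatically bounded) to $\beta X$, and restrict to $X$. This is precisely why the paper dispatches (ii)$\Leftrightarrow$(iii) in one line via Proposition~\ref{p:fg=sfg-C-embedded}.

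More importantly, your proposed ``fix'' for (iii)$\Rightarrow$(i) tacitly uses the very fact you denied. You argue: if $f$ is $k$-continuous but discontinuous, (iii) yields some $K$ with $f{\restriction}_K$ not continuously extendable to $X$; since $f{\restriction}_K$ is continuous, contradiction. But ``$f{\restriction}_K$ is continuous'' contradicts ``$f{\restriction}_K$ is not continuously extendable'' only if continuous functions on $K$ do extend to $X$, i.e., only if $K$ is $C$-embedded. The function $f$ itself is an extension of $f{\restriction}_K$, but a discontinuous one, so it does not witness extendability. Thus your direct route and the paper's route both rest on the same (true) fact; once you accept it, the whole block (i)$\Leftrightarrow$(ii)$\Leftrightarrow$(iii) is immediate.

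The remaining differences from the paper are cosmetic: your (iv)$\Rightarrow$(ii) via transitivity of strong functional generation is a perfectly good alternative to the paper's (iv)$\Rightarrow$(viii) through Proposition~\ref{p:R-quotient-sfg}, and your (viii)$\Rightarrow$(i) argument is identical to the paper's. Also, Proposition~\ref{p:R-quotient-sfg} only gives (ii)$\Rightarrow$(v) directly (not the equivalence you state), since (v) posits \emph{some} sum of compacta, not necessarily $\bigoplus\KK(X)$; the converse comes by routing (v)$\Rightarrow$(viii)$\Rightarrow$(i)$\Rightarrow$(ii), which your cycle already provides.
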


\begin{proof}
The equivalence (i)$\Leftrightarrow$(ii) is essentially the definition of $k_\IR$-spaces.

(ii)$\Leftrightarrow$(iii) follows from Proposition \ref{p:fg=sfg-C-embedded} because each compact subset of $X$ is $C$-embedded in $X$.

(ii)$\Ra$(iv) is trivial, and (ii)$\Ra$(v) follows from Proposition \ref{p:R-quotient-sfg}.

(iv)$\Ra$(viii) Let $\cM$ be a family of $k_\IR$-subspaces of $X$ such that $X$ is strongly functionally generated by $\mathcal{M}$. It follows from Proposition \ref{p:R-quotient-sfg} that $X$ is an $R$-quotient image of the discrete sum of $k_\IR$-spaces $\bigoplus\cM$. It remains to note that  $\bigoplus\cM$ is a $k_\IR$-space.

The implications (v)$\Ra$(vi)$\Ra$(vii)$\Ra$(viii) are trivial.

(viii)$\Ra$(i) Assume that $T:Y\to X$ is an $R$-quotient mapping from a $k_\IR$-space $Y$. To show that also $X$ is a $k_\IR$-space, let $f:X\to \IR$ be a $k$-continuous map. Then the composition $f\circ T$ is also $k$-continuous. Since $Y$ is a $k_\IR$-space, we obtain that $f\circ T$ is continuous. Since $T$ is $R$-quotient it follows that $f$ is continuous. Thus $Y$ is a $k_\IR$-space.
\end{proof}


\begin{corollary} \label{c:open-sur-kR}
A quotient group of a topological group $G$ which is a $k_\IR$-space is also a $k_\IR$-space.
\end{corollary}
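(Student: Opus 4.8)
The plan is to realize the quotient group as an $R$-quotient image of the ambient $k_\IR$-group and then invoke the equivalence (viii)$\Ra$(i) of Theorem \ref{t:characterization-kR}. Concretely, let $G$ be a topological group that is a $k_\IR$-space and let $H$ be a closed normal subgroup, so that $G/H$ is a Tychonoff topological group and the canonical projection $\pi\:G\to G/H$ is continuous, open, and surjective. The first step is to recall that every surjective open mapping is $R$-quotient; this is stated explicitly in the excerpt right after the definition of $R$-quotient mappings (``every quotient mapping as well as each surjective open mapping is $R$-quotient''). Hence $\pi$ is an $R$-quotient mapping.

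The second step is then immediate: $G/H$ is an $R$-quotient image of $G$, and $G$ is a $k_\IR$-space by hypothesis, so by the implication (viii)$\Ra$(i) of Theorem \ref{t:characterization-kR} (with $Y=G$ and $X=G/H$), the quotient $G/H$ is a $k_\IR$-space. One small point worth stating is that we should work with arbitrary continuous surjective group homomorphisms $q\:G\to G_1$ that are open onto their image rather than only canonical projections, but since any open continuous surjective homomorphism factors through $G/\ker q$ with the factor being a topological isomorphism, the two formulations coincide; alternatively one simply notes that such a $q$ is itself open and surjective, hence $R$-quotient, and applies the theorem directly.

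There is essentially no obstacle here: the content is entirely encapsulated in Theorem \ref{t:characterization-kR}, and the only ingredient specific to topological groups is the classical fact that the quotient map onto a quotient group is open. The proof is therefore a one-line deduction.

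\begin{proof}
Let $G$ be a topological group which is a $k_\IR$-space, and let $H$ be a closed normal subgroup of $G$. The canonical projection $\pi\:G\to G/H$ is continuous, surjective, and open, and hence $R$-quotient. Since $G$ is a $k_\IR$-space, the implication (viii)$\Ra$(i) of Theorem \ref{t:characterization-kR} applied to $T=\pi$ shows that $G/H$ is a $k_\IR$-space.
\end{proof}
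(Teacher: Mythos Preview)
Your proof is correct and follows essentially the same approach as the paper's: the paper's proof simply cites Theorem~\ref{t:characterization-kR} together with the fact that quotient maps of topological groups are open (referencing Hewitt--Ross), which is exactly your argument that $\pi$ is open and surjective, hence $R$-quotient, so (viii)$\Ra$(i) applies.
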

\begin{proof}
The assertion follows from Theorem \ref{t:characterization-kR} and the fact that quotient maps of topological groups are open, see Theorem 5.26 of \cite{HR1}.
\end{proof}

Let $X$ be a space and $L\subseteq X$. We shall say that $L$ is {\em $k_\IR$-dense} in $X$ if $L$ is $\KK_\IR(X)$-dense in $X$, where $\KK_\IR(X)$ is the family of all $k_\IR$-subspaces of $X$. Analogously, $L$ is {\em $k$-dense} in $X$ if $L$ is $\KK(X)$-dense in $X$. A subset $L$ of a topological space $X$ is $k$-dense if, and only if, for every $x\in X$, there is $M\subseteq L$ such that $x\in \overline{M}$ and $\overline{M}$ is compact. Any $k$-dense subset is $k_\IR$-dense, and each $k_\IR$-dense subset is dense.

Now we characterize $k_\IR$-spaces  using covers of the space by $k_\IR$-subspaces (being motivated by the condition $(\star)$ from Theorem 2.5 of \cite{Gabr-B1}). The equivalence (a)$\LRa$(iv) in the next theorem was obtained by Banakh in Theorem 3.3.5 of \cite{Banakh-Survey}.

\begin{theorem}\label{t:characterization-kRe}
For a space $X$, the following assertions are equivalent:
\begin{enumerate}
\item[{\rm(a)}] $X$ is a $k_\IR$-space;
\item[{\rm(b)}] there is a cover $\mathcal{M}$ of $X$ containing $k_\IR$-spaces which has one of the following properties:
\begin{enumerate}
\item[{\rm(i)}] for every subset $A$ of $X$ and each point $x\in \overline{A}$, there is $B\in \mathcal{M}$ such that $x\in B\cap \overline{A\cap B}$;
\item[{\rm(ii)}] for every non-closed subset $Q$ of $X$, there are a point $x\in \overline{Q}\SM Q$ and an $M\in \mathcal{M}$ such that $x\in M\cap \overline{Q\cap M}$;
\item[{\rm(iii)}] $M$ is $k$-dense in $X$ for each $M\in\mathcal{M}$;
\item[{\rm(iv)}] $M\cap M'$ is dense in $X$ for all $M,M'\in\mathcal{M}$;
\item[{\rm(v)}] $M$ is $k_\IR$-dense in $X$ for each $M\in\mathcal{M}$;
\item[{\rm(vi)}]  $\mathcal{M}$ is locally finite and all $M\in\mathcal{M}$ are closed.
\end{enumerate}
\end{enumerate}
\end{theorem}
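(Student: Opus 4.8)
The plan is to prove the equivalence by settling $(a)\Rightarrow(b)$ trivially and concentrating on $(b)\Rightarrow(a)$, which I will first reduce to just two ``core'' properties and then dispatch using the apparatus of Sections~\ref{sec:sfg} and~\ref{sec:k-R-spaces}. For $(a)\Rightarrow(b)$ I take the one-point cover $\mathcal{M}=\{X\}$: it consists of $k_\IR$-spaces because $X$ is one, and it visibly satisfies property~(iv), since the only pair $M,M'\in\mathcal{M}$ has $M\cap M'=X$ dense in $X$. (In fact $\{X\}$ satisfies all of (i)--(vi), but $(b)$ asks only for one of them.)

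For $(b)\Rightarrow(a)$, the first step is to reduce the six possible properties of $\mathcal{M}$ to the two cases (ii) and (v). Property (i) implies (ii): given a non-closed $Q\subseteq X$, one first picks $x\in\overline{Q}\setminus Q$ and then applies (i) with $A=Q$ to that particular $x$. Property (vi) implies (ii): by Proposition~\ref{p:generated-loc_fin} a locally finite closed cover generates $X$, and ``generated'' is, by the equivalence (i)$\Leftrightarrow$(iii) of Proposition~\ref{p:generated-family}, precisely the statement of (ii). Property (iii) implies (v), because a $k$-dense set is $k_\IR$-dense by definition. Finally, property (iv) implies (v): given $M\in\mathcal{M}$ and $x\in X$, choose $M'\in\mathcal{M}$ with $x\in M'$; since $M\cap M'$ is dense in $X$ we get $x\in M'\cap\overline{M\cap M'}$, and $M'$ is a $k_\IR$-subspace, so $M$ is $\KK_\IR(X)$-dense, i.e.\ $k_\IR$-dense.

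It then remains to show that each of (ii) and (v) forces $X$ to be a $k_\IR$-space. If $\mathcal{M}$ has property (ii), then $X$ is generated by $\mathcal{M}$ (Proposition~\ref{p:generated-family}), hence strongly functionally generated by $\mathcal{M}$ (Proposition~\ref{p:generated-sfg}); since $\mathcal{M}$ is a family of $k_\IR$-subspaces, the implication (iv)$\Rightarrow$(i) of Theorem~\ref{t:characterization-kR} gives that $X$ is a $k_\IR$-space. If $\mathcal{M}$ has property (v), apply Proposition~\ref{p:M-sfg} with $\mathcal{M}_0:=\mathcal{M}$ and with the larger cover taken to be $\KK_\IR(X)$ (this really is a cover of $X$, since one-point sets are compact, hence $k_\IR$-subspaces): every member of $\mathcal{M}_0$ is $\KK_\IR(X)$-dense by hypothesis, so $X$ is strongly functionally generated by $\KK_\IR(X)$, and again the implication (iv)$\Rightarrow$(i) of Theorem~\ref{t:characterization-kR} applies.

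I do not expect a genuine obstacle here: the argument is essentially an assembly of results already proved. The only delicate point is the reduction (i)$\Rightarrow$(ii), where the witnessing point must be chosen outside $Q$ \emph{before} invoking (i) — applying (i) to an arbitrary $x\in\overline{Q}$ might return a point of $Q$ and so miss the required ``$x\notin Q$''. A second, smaller, point is to keep track of the fact that $\KK_\IR(X)$ is indeed a cover of $X$, so that Proposition~\ref{p:M-sfg} is applicable in case (v).
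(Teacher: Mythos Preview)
Your proposal is correct and follows essentially the same route as the paper's proof: both establish $(a)\Rightarrow(b)$ via the trivial cover $\{X\}$, handle (i)$\Rightarrow$(ii) and (iii)$\Rightarrow$(v) and (iv)$\Rightarrow$(v) as easy reductions, and then derive (a) from (ii) via Propositions~\ref{p:generated-family}--\ref{p:generated-sfg} and from (v) via Proposition~\ref{p:M-sfg}, finishing each time with Theorem~\ref{t:characterization-kR}. The only cosmetic difference is that you route (vi) through (ii) using Proposition~\ref{p:generated-family}, whereas the paper goes from (vi) straight to (a) by the same chain Proposition~\ref{p:generated-loc_fin}$\to$Proposition~\ref{p:generated-sfg}$\to$Theorem~\ref{t:characterization-kR}; and your application of Proposition~\ref{p:M-sfg} in case (v) makes the role of the ambient family $\KK_\IR(X)$ more explicit than the paper does.
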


\begin{proof}
(a)$\Ra$(b) is clear (just take $\mathcal{M}=\{X\}$).  Now we prove (b)$\Ra$(a).
\smallskip

(i)$\Ra$(ii) is trivial.

(ii)$\Ra$(a) Propositions \ref{p:generated-family} and \ref{p:generated-sfg} applied to the family $\mathcal{M}$ imply that $X$ is strongly functionally generated by the family $\mathcal{M}$. Thus $X$ is a $k_\IR$-space by (i)$\LRa$(iv) of Theorem \ref{t:characterization-kR}.

(iii)$\Ra$(v) is clear because any $k$-dense subset is $k_\IR$-dense.

(iv)$\Ra$(v) To show that each $M\in \mathcal{M}$ is $k_\IR$-dense in $X$, let $x\in X$. Choose a $k_\IR$-subspace $M'\in \mathcal{M}$ such that $x\in M'$. By assumption on $\mathcal{M}$, the intersection $M\cap M'$ is dense in $X$. Therefore $x\in M'\cap \overline{M'\cap M}$. Thus $M$ is $k_\IR$-dense in $X$, as desired.

(v)$\Ra$(a) Proposition \ref{p:M-sfg} implies that the space $X$ is strongly functionally generated by the family $\mathcal{M}$ of $k_\IR$-subspaces of $X$. Thus $X$ is a $k_\IR$-space by  (i)$\LRa$(iv) of Theorem \ref{t:characterization-kR}.

(vi)$\Ra$(a)  Propositions \ref{p:generated-loc_fin} implies that $X$ is generated by the family $\mathcal{M}$. Therefore, by Proposition \ref{p:generated-sfg},  $X$ is strongly functionally generated by the family $\mathcal{M}$. Now Theorem \ref{t:characterization-kR} applies.
\end{proof}

\begin{theorem}\label{t:home-kRe}
Let $H$ be a $k_\IR$-dense (for example, $k$-dense) subspace of a homogenous space $G$. If $H$ is a $k_\IR$-space, then $G$ is also a $k_\IR$-space.
\end{theorem}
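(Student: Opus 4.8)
The plan is to deduce this directly from the characterization of $k_\IR$-spaces via $k_\IR$-dense covers given in Theorem \ref{t:characterization-kRe}, specifically from the equivalence (a)$\LRa$(b)(v). The idea is to manufacture, out of the single $k_\IR$-dense subspace $H$ and the homogeneity of $G$, a whole cover of $G$ by $k_\IR$-spaces each of which is $k_\IR$-dense in $G$.

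First I would set up the cover. Since $G$ is homogeneous, for every $x\in G$ there is a homeomorphism $h_x\colon G\to G$ with $h_x(e)=x$ for some fixed base point $e\in H$ (if $e\notin H$, pick any $e\in H$ and note $H$ still passes through it, or more simply observe $H$ is dense hence meets the orbit structure appropriately — actually it is cleanest to just fix $e\in H$ arbitrarily). Set
\[
\mathcal{M} = \{\, h(H) : h\colon G\to G \text{ is a homeomorphism} \,\}.
\]
Then $\mathcal{M}$ is a cover of $G$: given $x\in G$, homogeneity yields a homeomorphism $h$ with $h(e)=x$, so $x\in h(H)\in\mathcal{M}$. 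Moreover every member $h(H)$ of $\mathcal{M}$ is homeomorphic to $H$, hence is a $k_\IR$-space. This is exactly the setup of Proposition \ref{p:homeo-sfg}, with $\mathcal{M}$ closed under homeomorphisms of $G$ and containing the $k_\IR$-dense (in particular, by Definition \ref{def:M-dense} applied with the cover $\KK_\IR(G)$) subset $H$; but to invoke the clean statement I want, I would instead verify the hypothesis of Theorem \ref{t:characterization-kRe}(b)(v) directly.

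The key step is showing each $M=h(H)\in\mathcal{M}$ is $k_\IR$-dense in $G$. Fix $x\in G$. Because $H$ is $k_\IR$-dense in $G$, there is a $k_\IR$-subspace $L\subseteq G$ with $x\in L\cap\overline{L\cap H}$. Homeomorphisms preserve closures, being a $k_\IR$-space, and membership, so applying $h$ does not immediately help since we want $M=h(H)$, not $H$; instead I argue as follows. Apply $k_\IR$-density of $H$ at the point $h^{-1}(x)\in G$: there is a $k_\IR$-subspace $L'\subseteq G$ with $h^{-1}(x)\in L'\cap\overline{L'\cap H}$. Then $h(L')$ is a $k_\IR$-subspace of $G$ (homeomorphic to $L'$), and $x=h(h^{-1}(x))\in h(L')\cap h\big(\overline{L'\cap H}\big)=h(L')\cap\overline{h(L')\cap h(H)}=h(L')\cap\overline{h(L')\cap M}$, using that $h$ is a homeomorphism. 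This exhibits a $k_\IR$-subspace witnessing that $M$ is $k_\IR$-dense in $G$ at $x$; since $x$ was arbitrary, $M$ is $k_\IR$-dense in $G$. Thus $\mathcal{M}$ is a cover of $G$ by $k_\IR$-spaces satisfying condition (b)(v) of Theorem \ref{t:characterization-kRe}, and therefore $G$ is a $k_\IR$-space.

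I do not anticipate a serious obstacle here; the one point requiring a little care is the bookkeeping in the last step — making sure the witnessing $k_\IR$-subspace for $M$ is obtained by transporting the witnessing subspace for $H$ via the homeomorphism $h$, and that all three operations ($h(\cdot)$ commuting with closure, with intersection, and preserving the $k_\IR$ property) are used in the right places. Everything else is a direct citation of Theorem \ref{t:characterization-kRe} (and, if one prefers the packaged form, Proposition \ref{p:homeo-sfg} combined with Theorem \ref{t:characterization-kR}).
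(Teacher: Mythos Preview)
Your proof is correct and follows essentially the same route as the paper. The paper invokes Proposition \ref{p:homeo-sfg} (with $\mathcal{M}=\KK_\IR(G)$ and $A=H$) to conclude that $G$ is strongly functionally generated by its $k_\IR$-subspaces and then cites Theorem \ref{t:characterization-kR}(iv); you instead unfold that proposition by hand---building the cover $\{h(H):h\text{ a homeomorphism}\}$ and transporting the $k_\IR$-density witnesses through $h$---and then cite Theorem \ref{t:characterization-kRe}(b)(v), which is just the same machinery repackaged.
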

\begin{proof}
Proposition \ref{p:homeo-sfg} implies that $X$ is strongly functionally generated by the family of $k_\IR$-subspaces of $X$. It follows from Theorem \ref{t:characterization-kR} that $X$ is a $k_\IR$-space.
\end{proof}





\section{Characterizations of $s_\IR$-spaces} \label{sec:s-R-spaces}


In this section we characterize $s_\IR$-spaces analogously to characterizations of $k_\IR$-spaces obtained in Section \ref{sec:k-R-spaces}. Our first characterization uses $R$-quotient mappings and (strongly) functionally generated families. 

\begin{theorem} \label{t:characterization-sR}
For a Tychonoff space $X$, the following assertions are equivalent:
\begin{enumerate}
\item[{\rm(i)}] $X$ is an $s_\IR$-space;
\item[{\rm(ii)}]  $X$ is strongly functionally generated by the family $\mathcal{S}(X)$;
\item[{\rm(iii)}]  $X$ is functionally generated by the family $\mathcal{S}(X)$;
\item[{\rm(iv)}] $X$ is strongly functionally generated by a family of $s_\IR$-subspaces of $X$;
\item[{\rm(v)}]  $X$ is an $R$-quotient image of $\mathbf{s}\times D$, where $D$ is a discrete space;
\item[{\rm(vi)}]  $X$ is an $R$-quotient image  of a metrizable locally compact space;
\item[{\rm(vii)}]  $X$ is an $R$-quotient  image of a sequential space;
\item[{\rm(viii)}]  $X$ is an $R$-quotient  image of an $s_\IR$-space.
\end{enumerate}
\end{theorem}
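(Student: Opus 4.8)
The plan is to mirror, almost verbatim, the structure of the proof of Theorem~\ref{t:characterization-kR}, replacing compact subsets by convergent sequences and the class of $k_\IR$-spaces by the class of $s_\IR$-spaces throughout. The equivalence (i)$\Leftrightarrow$(ii) is essentially the definition: $X$ is an $s_\IR$-space iff every $s$-continuous real-valued function on $X$ is continuous, which says precisely that for every discontinuous $f\colon X\to\IR$ there is some $S\in\mathcal{S}(X)$ with $f{\restriction}_S$ discontinuous, i.e.\ $X$ is strongly functionally generated by $\mathcal{S}(X)$. For (ii)$\Leftrightarrow$(iii) I would invoke Proposition~\ref{p:fg=sfg-C-embedded}, noting that each convergent sequence (with its limit) is $C$-embedded in the Tychonoff space $X$: this is standard, since a convergent sequence with its limit is a compact metrizable subspace, and one checks directly that a continuous function on such a subspace extends (it is a zero-dimensional compact space $C$-embedded by Urysohn-type extension, or one simply uses that compact subsets of Tychonoff spaces are $C$-embedded).

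Next, (ii)$\Rightarrow$(iv) is trivial (the family $\mathcal{S}(X)$ itself consists of metrizable compact spaces, hence $s_\IR$-spaces), and (ii)$\Rightarrow$(v) follows from Proposition~\ref{p:R-quotient-sfg}: that proposition gives that the natural map $I\colon\bigoplus\mathcal{S}(X)\to X$ is $R$-quotient, and $\bigoplus\mathcal{S}(X)$ is a topological sum of copies of convergent sequences, hence a quotient — in fact a clopen decomposition — image of $\mathbf{s}\times D$ for a suitable discrete index set $D$ (each summand being a copy of $\mathbf{s}$, with one-point summands being retracts of $\mathbf{s}$); composing $R$-quotient maps (or precomposing an $R$-quotient with a retraction, which is $R$-quotient) keeps the map $R$-quotient by the obvious diagram chase. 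For (iv)$\Rightarrow$(viii): if $\mathcal{M}$ is a family of $s_\IR$-subspaces strongly functionally generating $X$, Proposition~\ref{p:R-quotient-sfg} makes $I\colon\bigoplus\mathcal{M}\to X$ $R$-quotient, and $\bigoplus\mathcal{M}$ is an $s_\IR$-space because a topological sum of $s_\IR$-spaces is an $s_\IR$-space (an $s$-continuous function on the sum is $s$-continuous, hence continuous, on each clopen summand). The implications (v)$\Rightarrow$(vi)$\Rightarrow$(vii)$\Rightarrow$(viii) are trivial: $\mathbf{s}\times D$ is metrizable and locally compact; every metrizable locally compact space is a $k$-space hence sequential; every sequential space is an $s_\IR$-space.

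Finally (viii)$\Rightarrow$(i): given an $R$-quotient map $T\colon Y\to X$ with $Y$ an $s_\IR$-space and a function $f\colon X\to\IR$ with $f{\restriction}_S$ continuous for every convergent sequence $S$ in $X$, I claim $f\circ T$ is $s$-continuous on $Y$. Indeed, if $y_n\to y$ in $Y$, then $T(y_n)\to T(y)$ in $X$ by continuity of $T$, so $S=\{T(y_n)\}_n\cup\{T(y)\}$ is a convergent sequence (with limit) in $X$ — here one must allow $S$ to be finite when the $T(y_n)$ are eventually constant, which is covered by regarding eventually-constant sequences as trivially convergent — hence $f{\restriction}_S$ is continuous, so $f(T(y_n))\to f(T(y))$. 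Thus $f\circ T$ is $s$-continuous, hence continuous since $Y$ is an $s_\IR$-space, and since $T$ is $R$-quotient it follows that $f$ is continuous, so $X$ is an $s_\IR$-space.

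The only genuine point requiring a little care — the step I would flag as the main obstacle, though it is minor — is (ii)$\Rightarrow$(v): one must exhibit $\bigoplus\mathcal{S}(X)$ as an $R$-quotient image of $\mathbf{s}\times D$ rather than merely as a topological sum of convergent sequences. The resolution is that each summand of $\bigoplus\mathcal{S}(X)$ is either a homeomorphic copy of $\mathbf{s}$ or a (finite, hence compact metrizable) convergent sequence that is a continuous retract of $\mathbf{s}$; taking $D$ to index the summands, one maps the $d$-th copy $\mathbf{s}\times\{d\}$ onto the $d$-th summand by this retraction, and the resulting surjection $q\colon\mathbf{s}\times D\to\bigoplus\mathcal{S}(X)$ is a quotient map (it is even open on each clopen piece), hence $R$-quotient; then $I\circ q\colon\mathbf{s}\times D\to X$ is a composition of $R$-quotient maps and therefore $R$-quotient (composition of $R$-quotient maps is $R$-quotient, directly from the definition). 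Everything else is a routine transcription of the $k_\IR$ argument with ``compact'' replaced by ``convergent sequence'' and ``$k$-space'' by ``sequential space''.
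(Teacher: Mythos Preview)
Your proof is correct and follows essentially the same route as the paper: the same cycle of implications, the same appeals to Propositions~\ref{p:R-quotient-sfg} and~\ref{p:fg=sfg-C-embedded}, and the same closing argument for (viii)$\Rightarrow$(i). If anything you are more careful than the paper on two points: the paper simply writes ``(ii)$\Rightarrow$(v) follows from Proposition~\ref{p:R-quotient-sfg} (applied to $\mathcal{M}=\mathcal{S}(X)$ and $D=|\mathcal{S}(X)|$)'' without addressing finite summands, and in (viii)$\Rightarrow$(i) it asserts rather than verifies that $f\circ T$ is $s$-continuous --- your explicit handling of both is a small improvement, not a deviation.
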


\begin{proof}
The equivalence (i)$\Leftrightarrow$(ii) is essentially the definition of $s_\IR$-spaces.

(ii)$\Leftrightarrow$(iii) follows from Proposition \ref{p:fg=sfg-C-embedded} because each convergent sequence with its limit point in $X$ (as a compact subset)  is $C$-embedded in $X$.

(ii)$\Ra$(iv) is trivial.

(iv)$\Ra$(viii) Let $\mathcal{M}$ be a family of $s_\IR$-subspaces such that $X$ is strongly functionally generated by $\mathcal{M}$. Then, by Proposition \ref{p:R-quotient-sfg}, $X$ is an $R$-quotient image of the discrete sum $\bigoplus\mathcal{M}$. It remains to note that $\bigoplus\mathcal{M}$ is also an  $s_\IR$-space.
\smallskip

(ii)$\Ra$(v) follows from Proposition \ref{p:R-quotient-sfg} (applied to $\mathcal{M}=\mathcal{S}(X)$ and $D=|\mathcal{S}(X)|$).
\smallskip

The implications (v)$\Ra$(vi)$\Ra$(vii)$\Ra$(viii) are trivial.
\smallskip

(viii)$\Ra$(i) Let $f:Y\to \IR$ be an $s$-continuous map. Then the composition $f\circ T$ is also $s$-continuous. Since $X$ is an $s_\IR$-space, we obtain that $f\circ T$ is continuous. As $T$ is $R$-quotient it follows that $f$ is continuous. Thus $Y$ is an $s_\IR$-space.
\end{proof}


\begin{corollary} \label{c:open-sur-sR}
A quotient group of a topological group $G$ which is an $s_\IR$-space is an $s_\IR$-space, too.
\end{corollary}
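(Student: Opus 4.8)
The plan is to mimic exactly the proof of Corollary~\ref{c:open-sur-kR}, replacing ``$k_\IR$-space'' by ``$s_\IR$-space'' throughout. The two ingredients are: the characterization of $s_\IR$-spaces as $R$-quotient images of $s_\IR$-spaces, namely the equivalence (i)$\Leftrightarrow$(viii) of Theorem~\ref{t:characterization-sR}; and the classical fact that a quotient homomorphism of topological groups is an open map (Theorem~5.26 of \cite{HR1}). Since every surjective open continuous map is $R$-quotient (as noted in Section~\ref{sec:preliminary}), the quotient map $\pi\colon G\to G/N$ is $R$-quotient whenever $N$ is a closed normal subgroup of $G$.

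So the argument runs as follows. Let $G$ be a topological group that is an $s_\IR$-space and let $H=G/N$ be an arbitrary (Hausdorff) quotient group. The canonical projection $\pi\colon G\to H$ is continuous, surjective, and open by \cite[Theorem~5.26]{HR1}; hence $\pi$ is $R$-quotient. Now apply Theorem~\ref{t:characterization-sR}, specifically the implication (viii)$\Ra$(i): $H$ is an $R$-quotient image of the $s_\IR$-space $G$, therefore $H$ is an $s_\IR$-space. This completes the proof.

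There is essentially no obstacle here; it is a one-line deduction once Theorem~\ref{t:characterization-sR} is in hand. The only minor point to be careful about is the standing convention that all spaces are Tychonoff: one should either restrict to Hausdorff quotient groups (so that $N$ is closed and $G/N$ is Tychonoff, being a topological group), or simply note that the relevant quotient in applications is always taken by a closed subgroup. I would phrase the statement and proof so that this is unambiguous, in parallel with how Corollary~\ref{c:open-sur-kR} is stated.

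\begin{proof}
By Theorem~5.26 of \cite{HR1}, the canonical projection of $G$ onto its quotient group is open, hence $R$-quotient. The assertion now follows from the implication (viii)$\Ra$(i) of Theorem~\ref{t:characterization-sR}.
\end{proof}
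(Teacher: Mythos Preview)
Your proof is correct and follows exactly the same route as the paper's own proof: both invoke the openness of the quotient homomorphism (Theorem~5.26 of \cite{HR1}) to conclude it is $R$-quotient, and then apply Theorem~\ref{t:characterization-sR}. Your additional remark about the Tychonoff convention and closed normal subgroups is a reasonable caveat, though the paper does not address it explicitly.
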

\begin{proof}
The assertion follows from Theorem \ref{t:characterization-sR} and the fact that quotient maps of topological groups are open, see Theorem 5.26 of \cite{HR1}.
\end{proof}

Let $X$ be a space and $L\subseteq X$. We shall say that $L$ is {\em $s_\IR$-dense} in $X$ if $L$ is $\cM$-dense in $X$, where $\cM$ is the family $\mathcal{S}_\IR(X)$ of all $s_\IR$-subspaces of $X$. Any sequentially dense subset is  $s_\IR$-dense, and each $s_\IR$-dense subset is dense.

Now we characterize $s_\IR$-spaces  using covers of the space by $s_\IR$-subspaces, it is similar to Theorem \ref{t:characterization-kRe}.

\begin{theorem}\label{t:characterization-sRe}
For a space $X$, the following assertions are equivalent:
\begin{enumerate}
\item[{\rm(a)}] $X$ is an $s_\IR$-space;
\item[{\rm(b)}] there is a cover $\mathcal{M}$ of $X$ containing $s_\IR$-spaces which has one of the following properties:
\begin{enumerate}
\item[{\rm(i)}] for every subset $A$ of $X$ and each point $x\in \overline{A}$, there is $B\in \mathcal{M}$ such that $x\in B\cap \overline{A\cap B}$;
\item[{\rm(ii)}] for every non-closed subset $Q$ of $X$, there are a point $x\in \overline{Q}\SM Q$ and an $M\in \mathcal{M}$ such that $x\in M\cap \overline{Q\cap M}$;
\item[{\rm(iii)}] $M$ is sequentially dense in $X$ for each $M\in\mathcal{M}$;
\item[{\rm(iv)}] $M$ is $s_\IR$-dense in $X$ for each $M\in\mathcal{M}$;
\item[{\rm(v)}]  $\mathcal{M}$ is locally finite and all $M\in\mathcal{M}$ are closed;
\item[{\rm(vi)}] $M\cap L$ is dense in $X$ for all $M,L\in \mathcal{M}$.
\end{enumerate}
\end{enumerate}
\end{theorem}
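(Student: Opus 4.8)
The plan is to prove this theorem in complete parallel with the proof of Theorem \ref{t:characterization-kRe}, replacing the family $\KK(X)$ of compact subsets by the family $\mathcal{S}(X)$ of convergent sequences, ``$k$-continuous'' by ``$s$-continuous'', and ``$k_\IR$-space'' by ``$s_\IR$-space'' throughout. The implication (a)$\Ra$(b) is trivial: take $\mathcal{M}=\{X\}$, which trivially satisfies (i) (and (v)). So the work is entirely in (b)$\Ra$(a), and it splits into six cases according to which of (i)--(vi) the cover $\mathcal{M}$ satisfies. The key tools are Proposition \ref{p:generated-family}, Proposition \ref{p:generated-sfg}, Proposition \ref{p:M-sfg}, Proposition \ref{p:generated-loc_fin}, and the equivalence (i)$\LRa$(iv) of Theorem \ref{t:characterization-sR}, which says that $X$ is an $s_\IR$-space if and only if $X$ is strongly functionally generated by a family of $s_\IR$-subspaces.

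First I would dispose of the implications that reduce to ``generated'' plus the abstract machinery. For (i)$\Ra$(ii): this is immediate, since a non-closed $Q$ has a point $x\in\overline{Q}\SM Q$, and applying (i) to $A=Q$ at this $x$ gives the required $M$. For (ii)$\Ra$(a): condition (ii) is exactly condition (iii) of Proposition \ref{p:generated-family}, so $X$ is generated by $\mathcal{M}$; by Proposition \ref{p:generated-sfg} it is strongly functionally generated by $\mathcal{M}$, a family of $s_\IR$-subspaces, and Theorem \ref{t:characterization-sR}(i)$\LRa$(iv) finishes it. For (v)$\Ra$(a): Proposition \ref{p:generated-loc_fin} gives that $X$ is generated by the locally finite closed cover $\mathcal{M}$, then proceed as before. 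For (iii)$\Ra$(iv): any sequentially dense subset is $s_\IR$-dense (noted just before the theorem statement), so if every $M\in\mathcal{M}$ is sequentially dense it is $s_\IR$-dense. For (vi)$\Ra$(iv): given $M\in\mathcal{M}$ and $x\in X$, pick an $s_\IR$-subspace $M'\in\mathcal{M}$ with $x\in M'$; by (vi) the set $M\cap M'$ is dense in $X$, hence $x\in\overline{M\cap M'}\subseteq\overline{M'\cap M}$, so $x\in M'\cap\overline{M'\cap M}$, proving $M$ is $\mathcal{S}_\IR(X)$-dense, i.e. $s_\IR$-dense. Finally (iv)$\Ra$(a): set $\mathcal{M}_0=\mathcal{M}$; since each $M\in\mathcal{M}_0\subseteq\mathcal{M}$ is $\mathcal{M}$-dense in $X$ (this is what $s_\IR$-dense means, as $\mathcal{M}$ is a subfamily of $\mathcal{S}_\IR(X)$ — care is needed here, see below), Proposition \ref{p:M-sfg} yields that $X$ is strongly functionally generated by $\mathcal{M}$, a family of $s_\IR$-subspaces, and Theorem \ref{t:characterization-sR}(i)$\LRa$(iv) applies.

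The one subtle point — and the step I expect to be the main obstacle — is the passage from ``$M$ is $s_\IR$-dense in $X$'' (meaning $\mathcal{S}_\IR(X)$-dense, witnessed by arbitrary $s_\IR$-subspaces of $X$) to ``$M$ is $\mathcal{M}$-dense in $X$'' as needed to apply Proposition \ref{p:M-sfg}, whose hypothesis requires $\mathcal{M}_0\subseteq\mathcal{M}$ with each $M\in\mathcal{M}_0$ being $\mathcal{M}$-dense (witnessed by members of $\mathcal{M}$ itself). In the $k_\IR$ case the corresponding step (iv)$\Ra$(v)$\Ra$(a) of Theorem \ref{t:characterization-kRe} handled this by first producing, for each $M$, a witnessing $k_\IR$-subspace $M'\in\mathcal{M}$; here I would mirror that: when verifying (iv)$\Ra$(a), for a point $x\in X$ the witness $M'$ with $x\in M'\cap\overline{M'\cap M}$ must itself be chosen in $\mathcal{M}$ — but $\mathcal{M}$ is already assumed to be a cover of $X$, so simply reinterpret the $s_\IR$-density of $M$ relative to the given cover $\mathcal{M}$. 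Concretely, the cleanest route is: (iv) as stated literally already asserts $\mathcal{S}_\IR(X)$-density, but since $\mathcal{M}\subseteq\mathcal{S}_\IR(X)$ is a cover, one should instead verify the weaker-looking but sufficient statement ``$M$ is $\mathcal{M}$-dense'' by the argument in (vi)$\Ra$(iv), and then apply Proposition \ref{p:M-sfg} with $\mathcal{M}_0=\mathcal{M}$. Once this bookkeeping about which family witnesses density is handled correctly, every implication reduces to a direct citation of the propositions of Section \ref{sec:sfg} together with Theorem \ref{t:characterization-sR}, exactly as in the $k_\IR$ case, and no genuinely new idea is required.
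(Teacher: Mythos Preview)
Your plan is correct and matches the paper's proof almost step for step: the paper does exactly the chain (i)$\Ra$(ii)$\Ra$(a), (iii)$\Ra$(iv)$\Ra$(a), (v)$\Ra$(a), (vi)$\Ra$(iv), invoking Propositions \ref{p:generated-family}, \ref{p:generated-sfg}, \ref{p:generated-loc_fin}, \ref{p:M-sfg} and Theorem \ref{t:characterization-sR} just as you outline.

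The only problem is your resolution of the ``subtle point'' in (iv)$\Ra$(a). You propose to ``verify the weaker-looking but sufficient statement `$M$ is $\mathcal{M}$-dense' by the argument in (vi)$\Ra$(iv)''. That does not work: the argument for (vi)$\Ra$(iv) uses the hypothesis that $M\cap L$ is dense in $X$ for all $M,L\in\mathcal{M}$, which you simply do not have under (iv). The correct fix is much simpler and is what the paper (tersely) does: Proposition \ref{p:M-sfg} does not require the witnessing family to coincide with the given cover. Apply it with $\mathcal{M}_0$ equal to the given cover $\mathcal{M}$ and with the larger family equal to $\mathcal{S}_\IR(X)$, the family of \emph{all} $s_\IR$-subspaces of $X$. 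Since every $M\in\mathcal{M}$ is an $s_\IR$-space we have $\mathcal{M}_0\subseteq\mathcal{S}_\IR(X)$, and ``$M$ is $s_\IR$-dense'' means precisely ``$M$ is $\mathcal{S}_\IR(X)$-dense''. Proposition \ref{p:M-sfg} then gives that $X$ is strongly functionally generated by $\mathcal{S}_\IR(X)$, a family of $s_\IR$-subspaces, and Theorem \ref{t:characterization-sR}(i)$\LRa$(iv) finishes. With this one-line correction your proof is complete and essentially identical to the paper's.
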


\begin{proof}
(a)$\Ra$(b) is clear (just take $\mathcal{M}=\{X\}$).  Now we prove (b)$\Ra$(a).
\smallskip

(i)$\Ra$(ii) is trivial.

(ii)$\Ra$(a) Propositions \ref{p:generated-family} and \ref{p:generated-sfg} applied to the family $\mathcal{M}$ imply that $X$ is strongly functionally generated by the family $\mathcal{M}$. Thus $X$ is an $s_\IR$-space by Theorem \ref{t:characterization-sR}.
\smallskip

(iii)$\Ra$(iv) is clear because any sequentially dense subset is $s_\IR$-dense.
\smallskip

(iv)$\Ra$(a) Proposition \ref{p:M-sfg} implies that the space $X$ is strongly functionally generated by the family $\mathcal{M}$ of $s_\IR$-subspaces of $X$. Therefore $X$ is an $s_\IR$-space by Theorem \ref{t:characterization-sR}.
\smallskip

(v)$\Ra$(a)  Propositions \ref{p:generated-loc_fin} implies that $X$ is generated by the family $\mathcal{M}$. Therefore, by Proposition \ref{p:generated-sfg},  $X$ is strongly functionally generated by the family $\mathcal{M}$. Now Theorem \ref{t:characterization-sR} applies.
\smallskip

(vi)$\Ra$(iv) We show that each $M\in \mathcal{M}$ is $s_\IR$-dense in $X$. Let $x\in X$. Since $\mathcal{M}$ is a cover, there is $L\in \mathcal{M}$ such that $x\in L$. Then $L$ is an $s_\IR$-space and $x\in L\cap \overline{M\cap L}=L$ (since $M\cap L$ is dense in $X$).
\end{proof}

\begin{theorem}\label{t:home-sRe}
Let $H$ be an $s_\IR$-dense (for example, sequentially dense) subspace of a homogenous space $G$. If $H$ is an $s_\IR$-space, then $G$ is also an $s_\IR$-space.
\end{theorem}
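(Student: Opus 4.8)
The plan is to mimic the two-line proof of Theorem \ref{t:home-kRe}, replacing the family of $k_\IR$-subspaces by the family $\mathcal{S}_\IR(G)$ of all $s_\IR$-subspaces of $G$ and invoking the $s_\IR$-analogues of the results used there. Concretely, I would set $\mathcal{M}:=\mathcal{S}_\IR(G)$ and verify that $G$, the cover $\mathcal{M}$, and the subset $A:=H$ satisfy the hypotheses of Proposition \ref{p:homeo-sfg}.

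First I would note that $\mathcal{M}$ is a cover of $G$ (each singleton is trivially an $s_\IR$-space) and that $\mathcal{M}$ is invariant under self-homeomorphisms of $G$: if $M\in\mathcal{M}$ and $h\colon G\to G$ is a homeomorphism, then $h{\restriction}_M$ is a homeomorphism of $M$ onto $h(M)$, so $h(M)$ is again an $s_\IR$-space, i.e. $h(M)\in\mathcal{M}$. At this point the two hypotheses on $H$ come in: since $H$ is an $s_\IR$-space we have $H\in\mathcal{M}$, and, by the very definition of $s_\IR$-density, the assumption that $H$ is $s_\IR$-dense in $G$ says precisely that $H$ is $\mathcal{M}$-dense in $G$. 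Proposition \ref{p:homeo-sfg}, applied with $X=G$, the cover $\mathcal{M}=\mathcal{S}_\IR(G)$, and the $\mathcal{M}$-dense set $A=H$, then gives that $G$ is strongly functionally generated by $\mathcal{M}$, which is a family of $s_\IR$-subspaces of $G$.

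It remains only to apply the implication (iv)$\Ra$(i) of Theorem \ref{t:characterization-sR}: a space strongly functionally generated by a family of $s_\IR$-subspaces is itself an $s_\IR$-space. Hence $G$ is an $s_\IR$-space. Alternatively, one could bypass Proposition \ref{p:homeo-sfg} and feed the cover $\mathcal{M}_0=\{\,h(H):h\text{ a homeomorphism of }G\,\}$ --- which, by homogeneity of $G$ together with the observations above, is a cover of $G$ by $s_\IR$-dense $s_\IR$-subspaces --- directly into condition (b)(iv) of Theorem \ref{t:characterization-sRe}.

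I do not expect any substantial obstacle here; the only point requiring (routine) care is that ``being an $s_\IR$-space'' and ``being $s_\IR$-dense in $G$'' are topological properties preserved by the self-homeomorphisms of $G$ --- this is exactly what makes $\mathcal{M}$ homeomorphism-invariant and keeps each translate $h(H)$ both an $s_\IR$-subspace and $\mathcal{M}$-dense in $G$, so that the machinery of Section \ref{sec:sfg} applies. The argument is formally identical to the proof of Theorem \ref{t:home-kRe}, with $\mathcal{S}(X)$ in place of $\mathcal{K}(X)$.
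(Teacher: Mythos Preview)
Your proposal is correct and follows essentially the same approach as the paper: apply Proposition~\ref{p:homeo-sfg} with $\mathcal{M}=\mathcal{S}_\IR(G)$ and $A=H$ to conclude that $G$ is strongly functionally generated by a family of $s_\IR$-subspaces, then invoke the equivalence (iv)$\Leftrightarrow$(i) of Theorem~\ref{t:characterization-sR} (the paper cites Theorem~\ref{t:characterization-sRe}, which amounts to the same thing). Your write-up is in fact more careful than the paper's two-line proof, since you explicitly verify that $\mathcal{M}$ is a homeomorphism-invariant cover and that $H\in\mathcal{M}$ --- the latter being needed for the proof of Proposition~\ref{p:homeo-sfg} to go through.
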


\begin{proof}
Proposition \ref{p:homeo-sfg} implies that $X$ is strongly functionally generated by the family of $s_\IR$-subspaces of $X$. By Theorem \ref{t:characterization-sRe}, $X$ is an $s_\IR$-space.
\end{proof}



Recall that a space $X$ has {\em countable $\IR$-tightness} if the family $\mathcal{C}(X)$ of all countable subspaces of $X$ functionally generates $X$.

\begin{proposition}\label{p:sR-R-tightness}
Each $s_\IR$-space $X$ has countable $\IR$-tightness.
\end{proposition}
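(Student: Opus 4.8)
The plan is to deduce this directly from the characterization of $s_\IR$-spaces given in Theorem~\ref{t:characterization-sR}, specifically the equivalence (i)$\LRa$(ii), combined with the observation that each convergent sequence (with its limit) is a countable subspace. First I would let $X$ be an $s_\IR$-space and let $f\:X\to\IR$ be a function whose restriction $f{\restriction}_C$ is continuous for every countable $C\subseteq X$; the goal is to show $f$ is continuous. Since every convergent sequence together with its limit point is a countable subset of $X$, the hypothesis implies in particular that $f$ is $s$-continuous. As $X$ is an $s_\IR$-space, $f$ is continuous. This shows that $\mathcal{C}(X)$ functionally generates $X$ in the \emph{strong} sense; a fortiori, if $f{\restriction}_C$ fails to extend to a continuous function on $X$ for the relevant witnessing $C$, then $f$ itself must already be discontinuous on some countable set, which is the "functionally generated by $\mathcal{C}(X)$" property, i.e., countable $\IR$-tightness.

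In more detail, to match the definition literally: suppose $f\:X\to\IR$ is discontinuous. Then $f$ is not $s$-continuous (since $s_\IR$ says $s$-continuous $\Ra$ continuous), so there is a convergent sequence $S=\{x_n\}_{n}\cup\{x\}\in\mathcal{S}(X)$ such that $f{\restriction}_S$ is discontinuous. Put $C=S\in\mathcal{C}(X)$. Because $S$ is compact, it is $C$-embedded in $X$ (as already noted in the proof of Theorem~\ref{t:characterization-sR}), so a discontinuous restriction $f{\restriction}_C$ certainly cannot be extended to a continuous real-valued function on $X$. Hence $\mathcal{C}(X)$ functionally generates $X$, which is exactly the statement that $X$ has countable $\IR$-tightness.

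I do not expect any genuine obstacle here: the statement is essentially an immediate corollary of the definitions and of the chain "$s_\IR$-space $\Ra$ $s$-continuous functions are continuous $\Ra$ discontinuity is detected on a convergent sequence $\Ra$ discontinuity is detected on a countable set." The only point that needs a word of care is passing from "strongly functionally generated by $\mathcal{S}(X)$" to "functionally generated by $\mathcal{C}(X)$": this uses that $\mathcal{S}(X)\subseteq\mathcal{C}(X)$ together with the $C$-embedding of convergent sequences, so that a strong witness for $\mathcal{S}(X)$ is in particular an ordinary (functional-generation) witness lying in $\mathcal{C}(X)$. Alternatively one can cite Proposition~\ref{p:M-sfg} or simply argue as above.
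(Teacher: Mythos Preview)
Your proposal is correct and takes essentially the same approach as the paper: both arguments use the characterization of $s_\IR$-spaces in Theorem~\ref{t:characterization-sR} together with the inclusion $\mathcal{S}(X)\subseteq\mathcal{C}(X)$ to conclude that $\mathcal{C}(X)$ functionally generates $X$. The paper's proof is simply terser---it invokes (i)$\LRa$(iii) of Theorem~\ref{t:characterization-sR} directly and then notes that enlarging the generating family preserves functional generation---whereas you unpack the passage via strong functional generation and $C$-embedding, but the substance is the same.
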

\begin{proof}
By definition, $X$ is an $s_\IR$-space if, and only if, $X$ is functionally generated by the family  $\mathcal{S}(X)$ of all convergent sequences of $X$ with their limits. Therefore, the family of all countable subspaces of $X$ functionally generates $X$.
\end{proof}

In Theorem of \cite{Blasco-78}, Blasco proved that an open subspace of a $k_\IR$-space is a $k_\IR$-space, too. Below we show that an analogous result is valid also for $s_\IR$-spaces.
\begin{proposition} \label{p:open-sR}
An open subspace $U$ of an $s_\IR$-space $X$ is an $s_\IR$-space.
\end{proposition}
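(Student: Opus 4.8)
The plan is to reduce the statement to the characterization of $s_\IR$-spaces via strongly functionally generated covers (Theorem~\ref{t:characterization-sRe}), by producing on $U$ a suitable cover by $s_\IR$-subspaces. The natural candidate cover is $\mathcal{S}(X)\rst{U}$ in the following sense: for each point $x\in U$, since $X$ is an $s_\IR$-space, Theorem~\ref{t:characterization-sRe}(b)(i)--(ii) (applied with $\mathcal{M}=\{X\}$, i.e.\ the defining property) tells us $X$ is strongly functionally generated by $\mathcal{S}(X)$, but more usefully: every point of $X$ lies on a convergent sequence or the point is isolated. The key observation is that if $S=\{x_n\}\cup\{x\}$ is a convergent sequence in $X$ with $x\in U$, then, since $U$ is open, all but finitely many $x_n$ lie in $U$, so $S':=\{x_n : x_n\in U\}\cup\{x\}$ is a convergent sequence contained entirely in $U$, and $S'\in\mathcal{S}(U)$. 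Thus convergent sequences of $X$ with limit in $U$ are, up to removing a finite tail, convergent sequences inside $U$.

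First I would take an $s$-continuous function $f\colon U\to\IR$ and aim to show it is continuous at each point $x\in U$. Fix $x\in U$ and choose an open neighborhood $V$ of $x$ with $\overline{V}^{\,X}\subseteq U$ (possible since $X$ is Tychonoff, hence regular). Consider the function $g\colon X\to\IR$ defined by $g=f$ on $\overline{V}$ extended appropriately; more cleanly, I would work directly with the cover $\mathcal{M}=\{\,S\in\mathcal{S}(X): S\subseteq U\,\}$ of $U$ (every point of $U$ that is non-isolated in $X$ lies on such an $S$ by the above tail argument; isolated points form singleton members). Each $M\in\mathcal{M}$ is a convergent sequence, hence an $s_\IR$-space. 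I must then verify condition (b)(ii) of Theorem~\ref{t:characterization-sRe} for $U$ with this cover: given a non-closed $Q\subseteq U$, I need $x\in\overline{Q}^{\,U}\setminus Q$ and $M\in\mathcal{M}$ with $x\in M\cap\overline{Q\cap M}^{\,M}$. Since $U$ is open in $X$, $Q$ non-closed in $U$ means $\overline{Q}^{\,X}\cap U\neq Q$, so pick $x\in(\overline{Q}^{\,X}\cap U)\setminus Q$. Now apply the fact that $X$ is strongly functionally generated by $\mathcal{S}(X)$ — equivalently, $X$ itself being an $s_\IR$-space means (Theorem~\ref{t:characterization-sRe}(b)(ii) is satisfied for $X$ with $\mathcal{M}=\mathcal{S}(X)$)\dots but here I need to be careful: that gives a sequence in $X$, and its limit might not be $x$.

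The cleaner route, which I expect to be the actual argument, is via Theorem~\ref{t:characterization-sR}(viii): exhibit $U$ as an $R$-quotient image of an $s_\IR$-space. Since $X$ is an $s_\IR$-space, by Theorem~\ref{t:characterization-sR} it is an $R$-quotient image $p\colon Y\to X$ of some $s_\IR$-space $Y$ (indeed of an $R$-quotient image of $\mathbf{s}\times D$). Then $p^{-1}(U)$ is open in $Y$; an open subspace of the specific space $\mathbf{s}\times D$ — or more robustly, I would instead use that $U$ being open in $X$ and $p$ $R$-quotient should make $p\rst{p^{-1}(U)}\colon p^{-1}(U)\to U$ an $R$-quotient map (restriction of an $R$-quotient map to a full preimage of an open set is $R$-quotient: if $\phi\circ (p\rst{p^{-1}(U)})$ is continuous on $p^{-1}(U)$, extend $\phi$ by an arbitrary constant off $U$ and use local finiteness / openness to check $\phi\circ p$ is continuous on $Y$, hence $\phi$ continuous on $X$, hence on $U$). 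If this restriction lemma holds, then since $p^{-1}(U)$ is open in the $s_\IR$-space $Y$\dots but that is again "open subspace of an $s_\IR$-space," which is exactly what we are proving — so I must bottom out at a concrete model. Taking $Y$ of the form $\mathbf{s}\times D$ (or a sum of convergent sequences), an open subset is a topological sum of open subsets of $\mathbf{s}$, each of which is metrizable, hence sequential, hence an $s_\IR$-space; so $p^{-1}(U)$ is sequential, and $p\rst{p^{-1}(U)}$ is $R$-quotient onto $U$, whence $U$ is an $s_\IR$-space by Theorem~\ref{t:characterization-sR}(vii)$\Rightarrow$(i).

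The main obstacle I anticipate is the \emph{restriction lemma for $R$-quotient maps}: proving that if $p\colon Y\to X$ is $R$-quotient and $U\subseteq X$ is open, then $p\rst{p^{-1}(U)}\colon p^{-1}(U)\to U$ is $R$-quotient. Given $\phi\colon U\to\IR$ with $\phi\circ p\rst{p^{-1}(U)}$ continuous, I would for each $x\in U$ pick $h\in C(X)$ with $h(x)=1$, $\supp h\subseteq U$, and show $\phi\cdot h$ (extended by $0$) composed with $p$ is continuous on $Y$: on $p^{-1}(U)$ it equals $(\phi\circ p)\cdot(h\circ p)$, continuous; on $Y\setminus p^{-1}(\overline{V})$ (where $h=0$ on a neighborhood) it is $0$; these open sets cover $Y$. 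Hence $(\phi\cdot h)\circ p$ is continuous, so by $R$-quotientness $\phi\cdot h$ is continuous on $X$, so $\phi$ is continuous at $x$. This is a routine but genuine partition-of-unity argument, and it (rather than any set-theoretic subtlety) is where the work lies; everything else is a citation of Theorem~\ref{t:characterization-sR}.
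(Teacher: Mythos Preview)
Your final route is exactly the paper's: represent $X$ as an $R$-quotient image $p\colon Y\to X$ of a metrizable locally compact space (Theorem~\ref{t:characterization-sR}), restrict to the open set $p^{-1}(U)\subseteq Y$, observe this is again metrizable locally compact, and apply Theorem~\ref{t:characterization-sR} once more. The paper simply cites \cite[Proposition~1.6]{Okunev1990} for the restriction lemma (an $R$-quotient map restricted to the full preimage of an open set is $R$-quotient), whereas you supply the partition-of-unity argument yourself --- and your sketch is correct, once you fix the undefined $V$ to be an open set with $\overline{V}\subseteq U$ containing $\supp h$.
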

\begin{proof}
By Theorem \ref{t:characterization-sR}, there exists a metrizable locally compact space $Y$ and a continuous $R$-quotient surjective map $f:Y\to X$. Put $V=f^{-1}(U)$. It follows from \cite[Proposition 1.6]{Okunev1990} that the map $g=f\rst V: V\to U$ is an $R$-quotient map. Therefore, $U$ is an $R$-quotient image of a metrizable locally compact space. Thus, by Theorem \ref{t:characterization-sR}, $U$ is an $s_\IR$-space.
\end{proof}

\begin{example} \label{exa:w-boun-sR}
Let $C$ be a metrizable compact space, and let $\xxx_\ast\in C^{\w_1}$.
Then $X=\big(C^{\w_1}\SM \{\xxx_\ast\}\big)^{\w_1}$ is an $s_\IR$-space.
\end{example}
\begin{proof}
By Proposition \ref{p:noble:fc-prod}, the space $C^{\w_1}$ is an $s_\IR$-space. It follows from Proposition \ref{p:open-sR} that $Y=C^{\w_1}\setminus \{\xxx_\ast\}$ is an $s_\IR$--space. The space $Y$ is pseudocompact (see Theorem 1.7.7 from \cite{Pseudocompact-1-2018}).
Thus, by Proposition \ref{p:noble:pseudo-prod},  $X=Y^{\w_1}$ is an $s_\IR$-space.
\end{proof}

\begin{remark} \label{rem:sR-perman}
(i) It follows from (i) of Remark \ref{rem:kR-perman} that in general the product of two $s_\IR$-spaces may be not an $s_\IR$-space.

(ii) The space $\IR^{\omega_1}$ is  an $s_\IR$-space (Proposition \ref{p:noble:fc-prod}). Since $\omega_1+1$ is closely embedded into $\IR^{\omega_1}$, it follows that even compact subspaces of $s_\IR$-spaces can be not $s_\IR$-spaces.
\end{remark}






\section{Strongly sequentially separable spaces} \label{sec:sss}


Recall that a space $X$ is called {\em strongly sequentially separable} if $X$ is separable and each countable dense subspace of $X$ is sequentially dense. Strongly sequentially separable are thoroughly studied in \cite{GLoM}.

\begin{proposition}\label{p:sR-sss}
Let $X$ be a separable space whose all countable subsets are Fr\'{e}chet--Urysohn. Then $X$ is a strongly sequentially separable $s_\IR$-space.
\end{proposition}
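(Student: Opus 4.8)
The plan is to prove the two assertions of the statement separately, relying on the characterization of $s_\IR$-spaces via strongly functionally generated families (Theorem \ref{t:characterization-sR}, item (ii)) together with Proposition \ref{p:M-sfg} and the definition of strong sequential separability. Throughout, fix a countable dense subset $D\subseteq X$; it exists because $X$ is separable.

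First I would show that $X$ is strongly sequentially separable, i.e. that $D$ is sequentially dense. Let $x\in X$. Consider the countable set $D\cup\{x\}$, which by hypothesis is a Fr\'echet--Urysohn space. Since $D$ is dense in $X$, the point $x$ lies in the closure of $D$; taking closures inside the subspace $D\cup\{x\}$ one still has $x\in\overline{D}^{\,D\cup\{x\}}$ (the closure of $D$ in $D\cup\{x\}$ is either $D$ or $D\cup\{x\}$, and it must be the latter since $x$ is in the closure of $D$ in the ambient space and $D\cup\{x\}$ carries the subspace topology). By the Fr\'echet--Urysohn property of $D\cup\{x\}$ there is a sequence in $D$ converging to $x$ within $D\cup\{x\}$, hence converging to $x$ in $X$. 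As $x$ was arbitrary, $D$ is sequentially dense, so $X$ is strongly sequentially separable. (This also shows every countable dense subset is sequentially dense, by the same argument, since the hypothesis is on all countable subsets.)

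Next I would show $X$ is an $s_\IR$-space. The idea is to apply Theorem \ref{t:characterization-sR}(ii): it suffices to exhibit a cover of $X$ by $s_\IR$-subspaces that strongly functionally generates $X$. I would use the cover $\mathcal{M}=\mathcal{S}(X)$ of all convergent sequences together with their limits; each such subspace is metrizable, hence an $s_\IR$-space. To get strong functional generation, I would invoke Proposition \ref{p:M-sfg} with $\mathcal{M}_0$ a suitable subfamily whose members are $\mathcal{M}$-dense. Concretely, for each $x\in X$ pick (by the first part) a sequence $S_x\subseteq D$ converging to $x$, and let $M_x=S_x\cup\{x\}\in\mathcal{S}(X)$; set $\mathcal{M}_0=\{M_x:x\in X\}$. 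I claim each $M_x$ is $\mathcal{M}$-dense in $X$: given any $y\in X$, use the first part again to choose a sequence in $D\cap M_x$?—here is the subtlety, since $M_x\cap D=S_x$ need not be dense. Instead I would take $\mathcal{M}_0$ differently: enlarge it so that it covers $X$ and each member meets $D$ densely. The cleanest route is to note that $D$ is countable, hence (being a subspace of $X$ all of whose countable subsets are Fr\'echet--Urysohn) $D$ itself is Fr\'echet--Urysohn and sequentially dense in $X$; then for any $y\in X$ and any $M\in\mathcal{S}(X)$ with $M\cap D$ dense in $X$ one gets a sequence in $M\cap D$ converging to $y$, witnessing $\mathcal{M}$-density. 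So I would let $\mathcal{M}_0$ consist of all $M\in\mathcal{S}(X)$ such that $M\cap D$ is dense in $X$; one checks this is a cover (for each $x$, build a single sequence from $D$ that is dense in $X$ and tacks on a tail converging to $x$—possible since $D$ is countable and sequentially dense) and Proposition \ref{p:M-sfg} then yields that $X$ is strongly functionally generated by $\mathcal{S}(X)$, hence an $s_\IR$-space by Theorem \ref{t:characterization-sR}.

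The main obstacle is the construction of $\mathcal{M}_0$: one must produce, for every point $x\in X$, a single convergent sequence $M$ with limit appropriately placed such that $M\cap D$ is dense in $X$ and $x\in\overline{M\cap D}$, using only that $D$ is countable and sequentially dense and that countable subsets are Fr\'echet--Urysohn. A natural approach is: enumerate $D=\{d_n:n\in\w\}$, use Fr\'echet--Urysohn-ness of $D\cup\{x\}$ to get a sequence $(d_{n_k})\to x$, and then interleave this with an enumeration that visits all of $D$; the delicate point is ensuring the interleaved sequence still converges to $x$, which may force one instead to work with $\mathcal{M}=\mathcal{C}(X)$-type arguments or to appeal to Proposition \ref{p:homeo-sfg} if $X$ is additionally homogeneous—but in the general case I expect the interleaving must be done so that only finitely many "dense-enumeration" terms appear before each stage, i.e. one uses that a sequence converging to $x$ remains convergent after inserting, between its $k$-th and $(k+1)$-st terms, a block of points whose length is allowed to grow but which is eventually ignored. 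Getting this bookkeeping right, or else finding the correct already-proved proposition to cite in its place, is the crux of the argument.
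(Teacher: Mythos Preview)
Your argument for strong sequential separability is correct and matches the paper's.

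The gap is in the $s_\IR$-part. Your plan to apply Proposition~\ref{p:M-sfg} with $\mathcal{M}=\mathcal{S}(X)$ and a subfamily $\mathcal{M}_0\subseteq\mathcal{S}(X)$ of $\mathcal{S}(X)$-dense members cannot succeed in a nontrivial space. If $M\in\mathcal{S}(X)$ is a convergent sequence together with its limit, then $M$ is compact, hence closed in $X$; consequently for any $N\in\mathcal{S}(X)$ one has $\overline{N\cap M}\subseteq\overline{M}=M$. Thus $M$ can be $\mathcal{S}(X)$-dense only if every point of $X$ already lies in $M$, i.e.\ $M=X$. The interleaving you sketch therefore cannot work: a sequence converging to $x$ has all but finitely many terms in each neighborhood of $x$, so inserting infinitely many ``dense-enumeration'' terms that stay away from $x$ destroys convergence, while inserting only finitely many such terms cannot make the range dense.

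The paper sidesteps this by choosing a different cover. Put $D_x:=D\cup\{x\}$ for each $x\in X$ and let $\mathcal{M}=\{D_x:x\in X\}$. Each $D_x$ is countable, hence Fr\'echet--Urysohn by hypothesis, hence an $s_\IR$-space; $\mathcal{M}$ covers $X$; and for all $x,y\in X$ one has $D_x\cap D_y\supseteq D$, which is dense in $X$. Theorem~\ref{t:characterization-sRe}(vi) then applies directly and yields that $X$ is an $s_\IR$-space. The key realization you are missing is that the members of the cover need not be convergent sequences---any $s_\IR$-subspaces will do---and countable Fr\'echet--Urysohn subspaces are both easy to manufacture here and automatically have dense pairwise intersections once each of them is anchored on the fixed dense set $D$.
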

\begin{proof}
Let $D$ be a dense countable subspace of $X$. For every $x\in X$, set $D_x:=D\cup \{x\}$. Since $D_x$ is countable, the space $D_x$ is Fr\'{e}chet--Urysohn.
Therefore there is a sequence in $D_x$ converging to $x$. Thus $D$ is sequentially dense in $X$. Therefore $X$ is a strongly sequentially separable space.

The family $\cM=\{ D_x:x\in X\}$ consists of Fr\'{e}chet--Urysohn subsets. Since $M\cap L\supseteq D$ is dense in $X$ for any $M,L\in\cM$ and each $M\in\cM$ is an $s_\IR$-space, Theorem \ref{t:characterization-sRe}(vi) implies that $X$ is an $s_\IR$-space.
\end{proof}

Let $X$ be a space. Recall that the {\em local character $\chi(x,X)$} of $X$  at a point $x$ is the least cardinality of a local base at $x$. The {\em character} $\chi(X)$
 is the least upper bound of the local characters. Recall also that the small cardinal $\mathfrak{p}$ is defined as follows
\[
\begin{aligned}
\mathfrak{p}=\big\{ \mathcal{B}: & \;\mathcal{B} \mbox{ is a subbase for a free filter on $\w$, and there is no infinite $A$}\\
& \; \mbox{such that $A\subseteq^\ast B$ for all $B\in \mathcal{B}$}\big\},
\end{aligned}
\]
where $A\subseteq^\ast B$ means that $A\SM B$ is finite and $B\SM A$ is infinite.

\begin{theorem} \label{t:separable-p-ks}
Let $X$ be a separable space such that $\chi(X)<\mathfrak{p}$. Then $X$ is a strongly sequentially separable  $s_\IR$-space.
\end{theorem}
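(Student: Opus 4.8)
The plan is to verify that $X$ satisfies the hypothesis of Proposition \ref{p:sR-sss}, namely that every countable subspace of $X$ is Fr\'{e}chet--Urysohn; then $X$ is automatically a strongly sequentially separable $s_\IR$-space. So the whole task reduces to the following claim: if $Y$ is a countable space with $\chi(Y)<\mathfrak{p}$, then $Y$ is Fr\'{e}chet--Urysohn. Since any countable subspace $Y\subseteq X$ also satisfies $\chi(Y)\le\chi(X)<\mathfrak{p}$, this suffices.

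First I would fix a countable space $Y$ with $\chi(Y)=\kappa<\mathfrak p$, a subset $A\subseteq Y$, and a point $y\in\overline{A}$; the goal is to produce a sequence in $A$ converging to $y$. Enumerate a local base $\{U_\alpha:\alpha<\kappa\}$ at $y$, which we may assume is closed under finite intersections. Since $y\in\overline A$, each set $A\cap U_\alpha$ is a nonempty subset of the countable set $A$; identifying $A$ with a subset of $\w$, the family $\mathcal B=\{A\cap U_\alpha:\alpha<\kappa\}$ is a family of infinite-or-finite subsets of $\w$ that is centered (closed downward under finite intersections and all members nonempty). The key dichotomy: either some $A\cap U_\alpha$ is finite --- in which case $y$ has a neighborhood meeting $A$ in a finite set, and a standard argument (using that $Y$ is $T_1$, remove the finitely many points of $A\cap U_\alpha$ other than $y$ if $y\in A$, etc.) shows $y\in A$ and we may take a constant sequence --- or every $A\cap U_\alpha$ is infinite, so $\mathcal B$ generates a free filter on $\w$ with a subbase of size $\kappa<\mathfrak p$. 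Then by the definition of $\mathfrak p$ (the pseudo-intersection number: every filter base of size $<\mathfrak p$ has an infinite pseudo-intersection), there is an infinite set $S\subseteq\w$ with $S\subseteq^\ast A\cap U_\alpha$ for all $\alpha<\kappa$. Enumerating $S=\{s_n:n\in\w\}$, the sequence $(s_n)$ lies (mod finitely many terms) in $A$ and is eventually inside every basic neighborhood $U_\alpha$ of $y$, hence converges to $y$; discarding the finitely many terms outside $A$ gives the desired sequence in $A$.

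Having shown every countable subspace of $X$ is Fr\'{e}chet--Urysohn, I invoke Proposition \ref{p:sR-sss} directly: $X$ is separable (by hypothesis) and all its countable subsets are Fr\'{e}chet--Urysohn, so $X$ is a strongly sequentially separable $s_\IR$-space, which is exactly the conclusion.

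The main obstacle is purely bookkeeping around the definition of $\mathfrak p$ as written in the excerpt: it is phrased in terms of subbases of free filters on $\w$ having no infinite pseudo-intersection, so one must be a little careful to (a) reduce to the case that the $U_\alpha$-traces are infinite (the finite case being handled separately and giving $y\in A$), (b) observe that $\{A\cap U_\alpha\}$, transported to $\w$ via a bijection $A\leftrightarrow\w$, is a subbase for a genuinely free filter --- freeness comes from the fact that in the Tychonoff (hence $T_1$) space $Y$ the point $y$ has arbitrarily small neighborhoods, so $\bigcap_\alpha (A\cap U_\alpha)\subseteq\{y\}$ and we can arrange it to be empty after removing $y$ --- and (c) translate "$S\subseteq^\ast A\cap U_\alpha$ for all $\alpha$" into "$(s_n)\to y$" using that $\{U_\alpha\}$ is a neighborhood base. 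None of this is deep; the only genuine input is the cardinal inequality $\chi(X)<\mathfrak p$ feeding the pseudo-intersection property of $\mathfrak p$.
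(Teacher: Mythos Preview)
Your proposal is correct and follows the same approach as the paper: both reduce to Proposition~\ref{p:sR-sss} by showing that every countable subspace of $X$ is Fr\'{e}chet--Urysohn. The only difference is that the paper obtains this last fact by citing a lemma of Nyikos, whereas you supply a direct proof of that lemma via the pseudo-intersection property of $\mathfrak{p}$; the argument you sketch is essentially the standard one and is sound.
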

\begin{proof}
By Lemma 1.11 of  \cite{Nyikos1992}, each countable space whose character does not exceed $\mathfrak{p}$ is a Fr\'{e}chet--Urysohn space. It follows that every countable subspace of $X$ is a Fr\'{e}chet--Urysohn space. It remains to apply Proposition \ref{p:sR-sss}.
\end{proof}

Now we consider $C_p$-spaces which are strongly sequentially separable.

\begin{proposition}\label{p:cp-sep}
For a space $X$, the following conditions are equivalent:
\begin{enumerate}
\item[{\rm(i)}] $C_p(X)$ is separable;
\item[{\rm(ii)}] $X$ admits a condensation onto a separable metrizable space;
\item[{\rm(iii)}] $X$ is a submetrizable space and $|X|\leq\mathfrak{c}$.
\end{enumerate}
\end{proposition}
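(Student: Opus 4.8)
The plan is to prove the cycle of implications (i)$\Ra$(ii)$\Ra$(iii)$\Ra$(i), all of which are essentially standard facts from $C_p$-theory, so the "proof" is really an assembly of known arguments.

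\textbf{Step 1: (i)$\Ra$(ii).} Suppose $C_p(X)$ is separable, and let $D=\{f_n:n\in\w\}$ be a countable dense subset. I would consider the diagonal map $\Delta\colon X\to\IR^\w$ defined by $\Delta(x)=(f_n(x))_{n\in\w}$. The key point is that $D$ separates points of $X$: if $x\neq y$ there is $f\in C(X)$ with $f(x)\neq f(y)$, and density of $D$ in $C_p(X)$ (in particular at the two-point set $\{x,y\}$) yields some $f_n$ with $f_n(x)\neq f_n(y)$. Hence $\Delta$ is a continuous injection, i.e. a condensation of $X$ onto the separable metrizable subspace $\Delta(X)\subseteq\IR^\w$.

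\textbf{Step 2: (ii)$\Ra$(iii).} If $c\colon X\to Z$ is a condensation onto a separable metrizable $Z$, then $c$ is in particular a continuous bijection, so $|X|=|Z|\leq\mathfrak{c}$ (a separable metrizable space has cardinality at most continuum), and pulling back a metric on $Z$ shows $X$ is submetrizable.

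\textbf{Step 3: (iii)$\Ra$(i).} This is the only implication with genuine content, and it is where I expect the main work to sit, though even here the argument is classical. Assume $X$ is submetrizable, witnessed by a coarser metrizable topology $\tau'$ on $X$, with $|X|\leq\mathfrak{c}$. The metrizable space $(X,\tau')$ has weight $\leq|X|\leq\mathfrak{c}$, hence $C_p(X,\tau')$ embeds into $C_p$ of a metrizable space of weight $\leq\mathfrak c$; one knows $C_p(M)$ is separable for metrizable $M$ of weight $\leq\mathfrak c$ (equivalently $M$ embeds in $\IR^\mathfrak c$, and $C_p$ of a separable space has a countable... ) — more directly: a metrizable space of weight $\leq\mathfrak c$ condenses onto a separable metrizable space, so by (ii)$\Ra$(i) applied in that setting, or simply because $C_p(Y)$ is separable whenever $Y$ condenses onto a separable metrizable space, $C_p(X,\tau')$ is separable. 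Finally, since $\tau'\subseteq\tau$, every $\tau'$-continuous function is $\tau$-continuous, so $C_p(X,\tau')\subseteq C_p(X)$; but this subspace is dense in $C_p(X)$ because a dense subset of $C_p(X)$ only needs to approximate on finite sets, and $\tau'$-continuous functions already separate the (finitely many) points involved — indeed $C((X,\tau'))$ is dense in $C_p(X)$ since finite subsets of $X$ are $\tau'$-discrete. A dense subset of a space containing a dense separable subspace is separable, so $C_p(X)$ is separable. To avoid circularity I would instead phrase Step 3 self-containedly: take a countable base $\{B_k\}$ for a separable metrizable topology coarser than $\tau$ obtained by restricting the weight-$\mathfrak c$ metric topology to a countable generating family, produce by Tychonoff's theorem / Urysohn functions a countable family of $\tau$-continuous functions that is dense in $C_p(X)$, using that any basic open set $[F;\e]$-type neighborhood of a given $g\in C_p(X)$ depends only on finitely many points which can be separated in the coarser metrizable topology.

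\textbf{Anticipated obstacle.} The only subtle point is making (iii)$\Ra$(i) rigorous without invoking (i)$\Ra$(ii) circularly: one must genuinely exhibit a countable dense subset of $C_p(X)$ from submetrizability plus $|X|\le\mathfrak c$. The cleanest route is: $X$ condenses onto a subspace $Z$ of $\IR^{\w}$ (a second-countable metrizable space, using $|X|\le\mathfrak c$ together with submetrizability to cut the weight down to $\w$ — note a metrizable space condenses onto a second-countable one iff it has cardinality $\le\mathfrak c$), and then $C_p(Z)$ is separable because $Z$ is second countable (the ring generated by finitely many coordinate projections and rationals is countable and dense), and the induced embedding $C_p(Z)\hookrightarrow C_p(X)$ has dense image. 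I would present it in this order.
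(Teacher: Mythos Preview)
Your approach is correct and matches the paper's: both reduce everything to the implication (iii)$\Ra$(ii), i.e., to the claim that a metrizable space of cardinality $\leq\mathfrak{c}$ condenses onto a separable metrizable space. The paper proves this by padding the image $Y$ of the submetrizable condensation with a discrete space of size $\mathfrak{c}$ to get a metrizable $Z$ of weight exactly $\mathfrak{c}$, and then invoking the Osipov--Pytkeev theorem that every metric space of weight $\lambda=\lambda^{\aleph_0}$ condenses onto the Hilbert cube; you simply assert the fact as known. Your Step~3 wanders a bit, but the ``anticipated obstacle'' paragraph lands on exactly the right route; just be aware that this condensation fact is the genuine content of the proposition and deserves either a citation or a short proof (it can be done elementarily: take a $\sigma$-discrete base $\bigcup_n\mathcal{B}_n$, inject each $\mathcal{B}_n$ into $(0,1)$, and build for each $n$ two continuous functions encoding ``which $B\in\mathcal{B}_n$ contains $x$'' --- countably many functions in total that separate points).
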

\begin{proof}
The equivalence (i)$\LRa$(ii) is well-known, see Theorem I.1.4 of \cite{Arhangel}. The implication (ii)$\Ra$(iii) is clear.

(iii)$\Ra$(ii) Let $f: X\to Y$ be a condensation onto a metrizable space. Then $w(Y)\leq|Y|\leq \mathfrak{c}$. Let $Y_*$ be a discrete space of cardinality $\mathfrak{c}$ and $Z=Y\oplus Y_*$. Then $Z$ is a metrizable space and $w(Z)=\mathfrak{c}$. By Theorem 3.4 of \cite{OsipovPytkeev2023}, the space $Z$ admits a condensation onto the Hilbert cube $[0,1]^\om$. Consequently, $Y$ and $X$ admit a condensation onto a separable metrizable space.
\end{proof}

\begin{proposition} \label{p:sss-characterization}
If $C_p(X)$ is separable, then the following conditions are equivalent:
\begin{enumerate}
\item[{\rm(i)}] $C_p(X)$ is strongly sequentially separable;
\item[{\rm(ii)}] each countable subset of $C_p(X)$ is Fr\'{e}chet--Urysohn.
\end{enumerate}
\end{proposition}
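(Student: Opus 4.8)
The plan is to prove the two implications separately, with the direction (i)$\Ra$(ii) being the routine one and (ii)$\Ra$(i) requiring the separability hypothesis together with a standard diagonalization inside $C_p(X)$. First I would record the trivial observation that every Fr\'{e}chet--Urysohn space is strongly sequentially separable once it is separable, because in a Fr\'{e}chet--Urysohn space every dense set is sequentially dense; this is the skeleton of (ii)$\Ra$(i). For (i)$\Ra$(ii), suppose $C_p(X)$ is strongly sequentially separable and let $A\subseteq C_p(X)$ be countable; I want to show $A$ is Fr\'{e}chet--Urysohn, i.e.\ for every $g\in\overline{A}$ there is a sequence in $A$ converging to $g$. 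The idea is to enlarge $A$ to a countable dense subset of a suitable separable subspace: since $C_p(X)$ is separable, fix a countable dense $D\subseteq C_p(X)$ and consider $D\cup A$, which is still countable and dense in $C_p(X)$; by strong sequential separability $D\cup A$ is sequentially dense, so there is a sequence $(h_n)$ in $D\cup A$ with $h_n\to g$. This does not immediately land the sequence inside $A$, so the actual argument must be more careful — one works instead with the subspace $Y=\cspn(A)$ or just observes that $g\in\overline{A}$ forces $g$ into the closure of the countable set $A$, and applies strong sequential separability to a countable dense subset of a separable space in which $A$ itself is dense. The cleanest route: replace $C_p(X)$ by the (separable, since a subspace of a separable metrizable-like space need not be, but) closure $\overline{A}$ — however $\overline{A}$ need not be separable in general, so instead one uses that $C_p(X)$ is separable and that in $C_p(X)$ the closure of a countable set is determined by countably many coordinates; this is where I would invoke that $C_p(X)$ separable implies $X$ condenses onto a separable metrizable space (Proposition \ref{p:cp-sep}), hence $C_p(X)$ embeds densely into $C_p$ of that metrizable image and one can localize.

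More concretely, here is the argument I would write for (i)$\Ra$(ii). Let $A=\{f_n:n\in\w\}\subseteq C_p(X)$ and let $g\in\overline{A}$. Consider the countable set $A'=\{f_n+q\cdot(g-f_m): n,m\in\w,\ q\in\Q\}$; hmm, this presumes $g\in C_p(X)$, which holds. Actually the efficient fix is: $A\cup\{g\}$ together with a countable dense $D$ gives a countable set $E=D\cup A\cup\{g\}$ dense in $C_p(X)$, and by (i) there is a sequence in $E$ converging to $g$. That sequence need not lie in $A$. To force it into $A$, note that $g\in\overline{A}$ means every basic neighborhood $[F;\e]+g$ of $g$ meets $A$; so build by induction a sequence $f_{n_k}\in A$ with $f_{n_k}\in g+[F_k;1/k]$ for an increasing sequence of finite sets $F_k$ exhausting a fixed countable dense subset of $X$ — but pointwise convergence on a countable dense set need not imply pointwise convergence on all of $X$. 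This is the genuine obstacle, and it is resolved precisely by strong sequential separability rather than mere separability: apply (i) to the countable dense set $D\cup A$ to get $h_k\to g$ with $h_k\in D\cup A$; now for each $k$ with $h_k\in D$, use $g\in\overline A$ to pick $f_{m_k}\in A$ so close to $h_k$ (in the neighborhood of $g$ that $h_k$ witnesses, shrunk) that the modified sequence still converges to $g$; for $k$ with $h_k\in A$ keep it. The resulting sequence lies in $A$ and converges to $g$, so $A$ is Fr\'{e}chet--Urysohn.

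For (ii)$\Ra$(i): assume every countable subset of $C_p(X)$ is Fr\'{e}chet--Urysohn and that $C_p(X)$ is separable. Let $D$ be any countable dense subset of $C_p(X)$ and let $g\in C_p(X)$. Then $g\in\overline{D}$, and since $D$ is a countable subset of $C_p(X)$, by hypothesis $D$ is a Fr\'{e}chet--Urysohn space, so there is a sequence in $D$ converging to $g$. Hence $D$ is sequentially dense, and as $D$ was an arbitrary countable dense subset, $C_p(X)$ is strongly sequentially separable. This direction is immediate.

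The step I expect to be the main obstacle is exactly the passage, in (i)$\Ra$(ii), from a convergent sequence living in the enlarged dense set $D\cup A$ to one living in $A$ itself. The naive "approximate each $h_k\in D$ by a nearby element of $A$" works only if one controls convergence on all points of $X$ simultaneously, not just on a countable dense subset, because pointwise topology is not metrizable when $X$ is uncountable. The resolution is to do the approximation inside the very neighborhood of $g$ that $h_k$ certifies: if $h_k\in g+[F_k;\e_k]$ with $\e_k\to 0$ and $F_k$ finite, then since $g\in\overline A$ pick $f\in A\cap(g+[F_k;\e_k])$ and replace $h_k$ by $f$; the new sequence still satisfies $f\in g+[F_k;\e_k]$, and because the sets $[F_k;\e_k]$ form a neighborhood base "along the sequence" inherited from the original convergence $h_k\to g$, one checks the replacement sequence converges to $g$ as well. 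Writing this bookkeeping carefully — choosing the $F_k$ and $\e_k$ from the original convergence and verifying the replaced sequence still converges — is the technical heart; everything else is bookkeeping.
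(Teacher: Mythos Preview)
Your direction (ii)$\Ra$(i) is essentially correct, with one small imprecision: knowing that the subspace $D$ is Fr\'echet--Urysohn tells you only about closures computed \emph{inside} $D$. To extract a sequence in $D$ converging to an external point $g$, apply the hypothesis to the countable set $D\cup\{g\}$; then $g\in\overline{D}^{\,D\cup\{g\}}$ and Fr\'echet--Urysohnness of $D\cup\{g\}$ gives the desired sequence. This is exactly what Proposition~\ref{p:sR-sss} does.

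The direction (i)$\Ra$(ii), however, has a genuine gap, precisely at the replacement step you flag as ``the technical heart''. Suppose $h_k\to g$ in $C_p(X)$ and you choose basic neighborhoods $g+[F_k;\e_k]\ni h_k$, then pick $a_k\in A\cap\big(g+[F_k;\e_k]\big)$. For the new sequence $(a_k)$ to converge to $g$ pointwise, you need $a_k(x)\to g(x)$ for \emph{every} $x\in X$; but the membership $a_k\in g+[F_k;\e_k]$ controls $a_k(x)$ only for $x\in F_k$. Since $\bigcup_k F_k$ is countable, for an uncountable $X$ there are points $x$ lying outside every $F_k$, and at such points $a_k(x)$ is completely unconstrained. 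Your sentence that the $[F_k;\e_k]$ ``form a neighborhood base along the sequence'' would amount to saying every basic neighborhood of $g$ eventually contains $g+[F_k;\e_k]$, which forces $g$ to have a countable local base---impossible in $C_p(X)$ when $X$ is uncountable. So the bookkeeping cannot be completed; the obstruction is structural, not clerical.

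The paper avoids this entirely. Given a countable $S\subseteq C_p(X)$, it enlarges $S$ to a countable dense $D$, uses $D$ to produce a condensation $\varphi=\diag D:X\to Y$ onto a separable metrizable space $Y$, and then invokes a result of Gartside--Lo--Marsh \cite[Theorem~16]{GLoM}: strong sequential separability of $C_p(X)$ forces $Y$ to have the Gerlits--Nagy property $(\gamma)$, hence $C_p(Y)$ is Fr\'echet--Urysohn. Since $D\subseteq\varphi^{\#}\big(C_p(Y)\big)\cong C_p(Y)$, the set $D$ (and therefore $S$) is Fr\'echet--Urysohn. The point is that the $(\gamma)$/Fr\'echet--Urysohn machinery handles all coordinates of $X$ at once, which your direct replacement cannot.
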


\begin{proof}
(i)$\Ra$(ii) Let $S$ be a countable subset of $C_p(X)$. Since $C_p(X)$ is separable, $S$ lies in a countable dense subset of $D\subseteq C_p(X)$. Put $\varphi=\bigtriangleup D: X\to \IR^D$, $\varphi(x):=\big(f(x)\big)_{f\in D}$, and $Y:=\varphi(X)$. Since $D$ is dense in $C_p(X)$, the map $\varphi: X\to Y$ is a condensation of $X$ onto a separable metrizable space $Y$. By Theorem 16 of \cite{GLoM}, $Y$ has the property $(\gamma)$.  Therefore $C_p(Y)$ is Fr\'{e}chet--Urysohn. Since $D\subseteq \varphi^{\#}(C_p(Y))$ and $\varphi^{\#}\big(C_p(Y)\big)$ is homeomorphic to $C_p(Y)$, we obtain that $D$ is Fr\'{e}chet--Urysohn.

(ii)$\Ra$(i) follows from Proposition \ref{p:sR-sss}.
\end{proof}

\begin{theorem} \label{t:cp-sms}
Let $X$ be a submetrizable space such that $|X|<\mathfrak{p}$. Then $C_p(X)$ is strongly sequentially separable.
\end{theorem}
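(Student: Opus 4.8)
The plan is to combine the two results the paper has just established: Theorem~\ref{t:separable-p-ks}, which says that a separable space of character $<\mathfrak p$ is strongly sequentially separable, and Proposition~\ref{p:sss-characterization}, which for separable $C_p(X)$ reduces strong sequential separability to every countable subset of $C_p(X)$ being Fr\'echet--Urysohn. So it suffices to check that, under the hypothesis that $X$ is submetrizable with $|X|<\mathfrak p$, the space $C_p(X)$ is separable and has character $<\mathfrak p$; then Theorem~\ref{t:separable-p-ks} applies directly.

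First I would verify separability of $C_p(X)$. Since $X$ is submetrizable it condenses onto a metrizable space, and $|X|<\mathfrak p\le\mathfrak c$, so $|X|\le\mathfrak c$; hence by the equivalence (i)$\Leftrightarrow$(iii) of Proposition~\ref{p:cp-sep}, $C_p(X)$ is separable. Next I would bound the character of $C_p(X)$. A neighborhood base at a point $f\in C_p(X)$ is given by the sets $\{g : |g(x)-f(x)|<\tfrac1n \text{ for } x\in F\}$ where $F$ ranges over finite subsets of $X$ and $n\in\NN$; the number of such basic neighborhoods is $|[X]^{<\omega}|\cdot\aleph_0=|X|\cdot\aleph_0$. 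Since $|X|<\mathfrak p$ and $\mathfrak p$ is uncountable, we get $\chi(C_p(X))\le |X|<\mathfrak p$ (when $X$ is finite, $C_p(X)=\IR^{|X|}$ is metrizable and the statement is trivial anyway).

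Having established that $C_p(X)$ is separable with $\chi(C_p(X))<\mathfrak p$, I would invoke Theorem~\ref{t:separable-p-ks} with the space $C_p(X)$ in place of $X$ to conclude that $C_p(X)$ is a strongly sequentially separable $s_\IR$-space; in particular it is strongly sequentially separable, which is the assertion. Alternatively one could route through Theorem~\ref{t:separable-p-ks} indirectly: since $\chi(C_p(X))<\mathfrak p$, Lemma~1.11 of \cite{Nyikos1992} shows every countable subspace of $C_p(X)$ is Fr\'echet--Urysohn, and then Proposition~\ref{p:sss-characterization}(ii)$\Rightarrow$(i) gives the conclusion once separability is in hand.

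I do not anticipate a genuine obstacle here; the only point requiring a little care is the character computation, namely making sure the standard neighborhood base at a function indexed by finite subsets of $X$ really has cardinality at most $|X|\cdot\aleph_0$, and handling the degenerate case of finite $X$ separately (where $C_p(X)$ is metrizable). Everything else is a direct citation of the preceding results.
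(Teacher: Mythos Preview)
Your proposal is correct and follows essentially the same route as the paper: establish separability of $C_p(X)$ via Proposition~\ref{p:cp-sep}, bound $\chi(C_p(X))$ by $|X|<\mathfrak p$, and then invoke Theorem~\ref{t:separable-p-ks}. The only cosmetic difference is that the paper cites Theorem~I.1.1 of \cite{Arhangel} for the character bound, whereas you compute it directly from the standard basic neighborhoods; the content is identical.
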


\begin{proof}
Proposition \ref{p:cp-sep} implies that $C_p(X)$ is separable. Since $|X|<\mathfrak{p}$, Theorem I.1.1 of  \cite{Arhangel} implies that $\chi(C_p(X))<\mathfrak{p}$. It follows from Theorem \ref{t:separable-p-ks} that $C_p(X)$ is  strongly sequentially separable.
\end{proof}

\begin{proposition} \label{p:sss-fu}
Let  $C_p(X)$ be a strongly sequentially separable space. 
Then the following conditions are equivalent:
\begin{enumerate}
\item[{\rm(i)}] $X^n$ is Lindel\"{o}f for each $n\in\w$;
\item[{\rm(ii)}] $C_p(X)$ has countable tightness;
\item[{\rm(iii)}] $C_p(X)$ is Fr\'{e}chet--Urysohn.
\end{enumerate}
\end{proposition}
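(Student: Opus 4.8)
The plan is to prove the equivalence of (i), (ii), (iii) for a strongly sequentially separable $C_p(X)$ by combining classical $C_p$-theory with the special feature that strong sequential separability controls tightness very tightly. First I would recall the standard chain of results from Arhangel'skii's book: for \emph{any} space $X$, the tightness of $C_p(X)$ equals the Lindel\"of number of the finite powers $X^n$ (this is the classical Arhangel'skii--Pytkeev theorem), which gives the equivalence (i)$\Leftrightarrow$(ii) without even using strong sequential separability. Likewise (iii)$\Rightarrow$(ii) is trivial since every Fr\'echet--Urysohn space has countable tightness. So the real content is the implication (ii)$\Rightarrow$(iii): a strongly sequentially separable space of countable tightness is Fr\'echet--Urysohn.

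For (ii)$\Rightarrow$(iii), I would argue as follows. Assume $C_p(X)$ is strongly sequentially separable and has countable tightness; I want to show it is Fr\'echet--Urysohn. Take any $A\subseteq C_p(X)$ and $g\in\overline{A}$. By countable tightness there is a countable $A_0\subseteq A$ with $g\in\overline{A_0}$. Now $C_p(X)$ is separable, so fix a countable dense $D\subseteq C_p(X)$; then $D':=D\cup A_0\cup\{g\}$ is a countable subset, and by Proposition~\ref{p:sss-characterization} (applicable since $C_p(X)$ is separable) \emph{every} countable subset of $C_p(X)$ is Fr\'echet--Urysohn — in particular the countable subspace containing $A_0$ and $g$ is Fr\'echet--Urysohn, so there is a sequence in $A_0\subseteq A$ converging to $g$. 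Hence $C_p(X)$ is Fr\'echet--Urysohn. (One must be slightly careful: Fr\'echet--Urysohn-ness of the countable subspace $A_0\cup\{g\}$ gives a sequence from $A_0$ converging to $g$ within that subspace, hence within $C_p(X)$.)

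Putting the pieces together: (i)$\Leftrightarrow$(ii) is the Arhangel'skii--Pytkeev theorem applied to $C_p(X)$; (iii)$\Rightarrow$(ii) is immediate; and (ii)$\Rightarrow$(iii) follows from Proposition~\ref{p:sss-characterization} together with countable tightness as sketched above. It is worth noting that strong sequential separability is used in exactly one place — to invoke Proposition~\ref{p:sss-characterization}, which identifies it (under separability of $C_p(X)$) with ``all countable subsets are Fr\'echet--Urysohn'' — and this is what upgrades countable tightness to the full Fr\'echet--Urysohn property.

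The main obstacle I anticipate is the bookkeeping in (ii)$\Rightarrow$(iii): one needs $C_p(X)$ to be separable in order to apply Proposition~\ref{p:sss-characterization}, but that is part of the hypothesis (strong sequential separability includes separability), so this is not a genuine gap. A secondary subtlety is making sure the convergent sequence obtained inside the countable Fr\'echet--Urysohn subspace $A_0\cup\{g\}$ really lies in $A$ and not merely in $\overline{A_0}$; but since $A_0\subseteq A$ and Fr\'echet--Urysohn-ness yields a sequence \emph{from} $A_0$ converging to $g$, this is automatic. No delicate cardinal arithmetic or measure theory is needed here; the proof is a clean assembly of the cited facts.
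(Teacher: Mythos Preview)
Your proof is correct and follows essentially the same route as the paper: both use the Arhangel'skii--Pytkeev theorem for (i)$\Leftrightarrow$(ii), the trivial implication for (iii)$\Rightarrow$(ii), and Proposition~\ref{p:sss-characterization} combined with countable tightness for (ii)$\Rightarrow$(iii). The only cosmetic difference is that the paper packages the last step as the general observation ``a space is Fr\'echet--Urysohn iff it has countable tightness and every countable subspace is Fr\'echet--Urysohn,'' whereas you unfold this directly by picking $A_0$ and working in $A_0\cup\{g\}$ (your detour through $D'=D\cup A_0\cup\{g\}$ is harmless but unnecessary).
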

\begin{proof}
The equivalence (i)$\LRa$(ii) is the well-known Arhangelskii--Pytkeev theorem \cite[II.1.1]{Arhangel}. The implication (iii)$\Ra$(ii) is clear.

(ii)$\Ra$(iii) Observe that a space is  Fr\'{e}chet--Urysohn if, and only if, it has countable tightness and each of its countable subspaces is  Fr\'{e}chet--Urysohn. Now, since $C_p(X)$ is strongly sequentially separable, Propositions \ref{p:sss-characterization} implies that each countable subspace of $C_p(X)$ is Fr\'{e}chet--Urysohn. By (ii),  $C_p(X)$ has countable tightness. Thus $C_p(X)$ is Fr\'{e}chet--Urysohn.
\end{proof}



\section{Spaces in which $s$-continuous functions are $k$-continuous} \label{sec:sk}


Studying $s_\IR$-spaces and $k_\IR$-spaces it is natural to consider the case when $s$-continuous functions are also $k$-continuous. This motivate us to introduce the following class of spaces.

\begin{definition} {\em
A space $X$ is called an {\em $sk$-space} if every $s$-continuous function on $X$ is $k$-continuous.}
\end{definition}

In the spirit of cover-type characterizations of $s_\IR$-spaces and $k_\IR$-spaces we characterize $sk$-spaces as follows.

\begin{proposition} \label{p:sk-space-charact}
A space $X$ is an $sk$-space if, and only if,  every compact subset $K$ of $X$ is contained in a subspace $M_K$ of $X$ that is an $sk$-space.
\end{proposition}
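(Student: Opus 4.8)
The statement is of the form ``$X$ is an $sk$-space iff every compact subset of $X$ sits inside an $sk$-subspace''. The forward implication is trivial: if $X$ itself is an $sk$-space, then for each compact $K\subseteq X$ we may take $M_K=X$. So the whole content is the reverse implication, and I would prove it directly from the definition: given an $s$-continuous function $f\colon X\to\IR$, I must show $f$ is $k$-continuous, i.e. $f{\restriction}_K$ is continuous for every compact $K\subseteq X$.

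\textbf{Key steps.} First I would fix a compact $K\subseteq X$ and, by hypothesis, choose a subspace $M_K\supseteq K$ of $X$ which is an $sk$-space. The decisive observation is that $s$-continuity of $f$ on $X$ restricts: any convergent sequence (with its limit) lying in $M_K$ is also a convergent sequence in $X$, so $f{\restriction}_{M_K}$ is $s$-continuous as a function on $M_K$. Second, since $M_K$ is an $sk$-space, $f{\restriction}_{M_K}$ is $k$-continuous on $M_K$; in particular its restriction to the compact set $K\subseteq M_K$ is continuous. But $f{\restriction}_K=(f{\restriction}_{M_K}){\restriction}_K$, so $f{\restriction}_K$ is continuous. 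Since $K$ was an arbitrary compact subset of $X$, the function $f$ is $k$-continuous, hence $X$ is an $sk$-space. This is really the only argument needed.

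\textbf{Where the subtlety lies.} There is essentially no obstacle here; the proof is a short ``localization'' argument of exactly the same shape as the proofs of Proposition~\ref{p:generated-family}, Theorem~\ref{t:characterization-kR}, and Theorem~\ref{t:characterization-sR}. The two points worth stating carefully are: (1) that restricting an $s$-continuous (resp.\ $k$-continuous) function to a subspace keeps it $s$-continuous (resp., the relevant restriction continuous) — this is immediate since a convergent sequence in a subspace is a convergent sequence in the ambient space, and a compact set in a subspace is compact in the ambient space; and (2) that a compact $K$ need not be all of $M_K$, so we genuinely use $k$-continuity of $f{\restriction}_{M_K}$ rather than mere continuity on $M_K$. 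No hypothesis on $X$ beyond Tychonoff is needed, and no earlier theorem is actually invoked — the proof is self-contained from the definitions of $s$-continuous, $k$-continuous, and $sk$-space.

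\begin{proof}
$(\Ra)$ If $X$ is an $sk$-space, then for every compact $K\subseteq X$ we may take $M_K=X$.

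$(\La)$ Let $f\colon X\to\IR$ be an $s$-continuous function; we show $f$ is $k$-continuous. Fix a compact subset $K$ of $X$ and choose, by assumption, a subspace $M_K$ of $X$ with $K\subseteq M_K$ which is an $sk$-space. Every convergent sequence with its limit point contained in $M_K$ is also such a sequence in $X$; hence the restriction $f{\restriction}_{M_K}\colon M_K\to\IR$ is $s$-continuous. Since $M_K$ is an $sk$-space, $f{\restriction}_{M_K}$ is $k$-continuous. As $K$ is a compact subset of $M_K$, the restriction $(f{\restriction}_{M_K}){\restriction}_K = f{\restriction}_K$ is continuous. Since $K$ was an arbitrary compact subset of $X$, the function $f$ is $k$-continuous. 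Thus $X$ is an $sk$-space.
\end{proof}
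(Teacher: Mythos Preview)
Your proof is correct and follows essentially the same approach as the paper: take $M_K=X$ for the forward direction, and for the reverse direction restrict an $s$-continuous $f$ to $M_K$, use that $M_K$ is an $sk$-space to get $k$-continuity there, and then restrict further to $K$. Your version is slightly more explicit about why $f{\restriction}_{M_K}$ is $s$-continuous, but the argument is otherwise identical.
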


\begin{proof}
The necessity is clear (set $M_K:=X$). To prove the sufficiency, let $f: X\to\IR$ be an $s$-continuous function. To show that $f$ is $k$-continuous, let $K\subseteq X$ be compact. Choose $M_K\subseteq X$ such that $K\subseteq M_K$ and $M_K$ is an $sk$-space. Since $f{\restriction}_{M_K}$ is $s$-continuous, $f{\restriction}_{M_K}$ is also $k$-continuous. Therefore, $f{\restriction}_{K}$ is continuous.
\end{proof}

Below we summarize some elementary properties of $sk$-spaces.
\begin{proposition} \label{p:sk-space}
\begin{enumerate}
\item[{\rm(i)}] Each $s_\IR$-space is trivially an $sk$-space.
\item[{\rm(ii)}]  An $sk$-space is an  $s_\IR$-space if, and only if, it is a  $k_\IR$-space.
\item[{\rm(iii)}] There are $k_\IR$-space spaces which are not  $sk$-spaces.
\item[{\rm(iv)}]  If $X$ has no infinite compact subsets, then $X$ is an $sk$-space. Consequently, there are $sk$-spaces which are not sequentially Ascoli.
\item[{\rm(v)}]  If every compact subset of $X$ is Fr\'{e}chet--Urysohn (for example, $X$ is angelic), then $X$ is even a hereditary $sk$-space.
\end{enumerate}
\end{proposition}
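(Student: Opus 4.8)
The plan is to settle the five items in turn; each of them reduces to unwinding the definitions together with a well-chosen example, so I anticipate no serious difficulty. For (i), if $X$ is an $s_\IR$-space and $f:X\to\IR$ is $s$-continuous, then $f$ is continuous, hence $k$-continuous, so $X$ is an $sk$-space. For (ii) the key (trivial) remark is that a convergent sequence together with its limit is a compact set, so $k$-continuity always implies $s$-continuity; consequently, in any $s_\IR$-space every $k$-continuous real-valued function is $s$-continuous, hence continuous, which already shows that an $s_\IR$-space is a $k_\IR$-space (for this direction the $sk$-hypothesis is not even needed). Conversely, if $X$ is simultaneously an $sk$-space and a $k_\IR$-space and $f:X\to\IR$ is $s$-continuous, then $f$ is $k$-continuous by the $sk$-property and therefore continuous by the $k_\IR$-property, so $X$ is an $s_\IR$-space. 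For (iii) I would take any infinite compact space $K$ without non-trivial convergent sequences, for instance $K=\beta\w$: being compact, $K$ is a $k$-space, hence a $k_\IR$-space; every convergent sequence in $K$ is eventually constant, so every real-valued function on $K$ is $s$-continuous; but $K$ is not discrete (an infinite discrete space is not compact), hence it admits a discontinuous real-valued function, e.g. $\chi_{\{x\}}$ for a non-isolated $x\in K$. Thus $K$ is a $k_\IR$-space which is not an $s_\IR$-space, and by (ii) it is not an $sk$-space.

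For the first assertion of (iv): if every compact subset of $X$ is finite then, finite subsets of a Tychonoff space being discrete, the restriction of an arbitrary $f:X\to\IR$ to any compact set is continuous; hence every real-valued function on $X$ is $k$-continuous and, in particular, $X$ is an $sk$-space. For the ``consequently'' I would fix a free ultrafilter $p$ on $\w$ and put $X=\w\cup\{p\}\subseteq\beta\w$: this space is countable, non-discrete and, as a subspace of $\beta\w$, has only finite compact subsets, hence it is an $sk$-space by what was just proved. Since all compact subsets of $X$ are finite, $\CC(X)=C_p(X)$; the functions $f_n=\chi_{\{n\}}$ $(n\in\w)$ belong to $C(X)$ and converge to $0$ in $C_p(X)$, but the sequence $\{f_n\}_{n\in\w}$ is not equicontinuous at the point $p$: every neighborhood of $p$ in $X$ contains a set $A\cup\{p\}$ with $A\in p$ infinite, hence contains some $n\in\w$ with $f_n(n)=1$ while $f_n(p)=0$. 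Therefore $X$ is not sequentially Ascoli. This is the main point of the proposition that requires an actual argument rather than a mere manipulation of definitions; everything else is formal.

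For (v) I would use the (well-known) implications that a Fr\'echet--Urysohn space is sequential and a sequential space is an $s_\IR$-space, so that on a compact Fr\'echet--Urysohn space every $s$-continuous real-valued function is continuous. Now suppose every compact subset of $X$ is Fr\'echet--Urysohn and let $Y\subseteq X$ be an arbitrary subspace; every compact subset of $Y$ is a compact subset of $X$, hence Fr\'echet--Urysohn. If $f:Y\to\IR$ is $s$-continuous, then for each compact $K\subseteq Y$ the restriction $f{\restriction}_K$ is $s$-continuous, hence continuous by the preceding remark, so $f$ is $k$-continuous; thus $Y$ is an $sk$-space, and since $Y$ was arbitrary, $X$ is a hereditary $sk$-space. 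Finally, the ``for example'' clause follows from the standard fact that every compact subspace of an angelic space is Fr\'echet--Urysohn.
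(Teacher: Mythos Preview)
Your proof is correct and follows essentially the same approach as the paper: the same example $\beta\w$ for (iii), the same example $\w\cup\{p\}$ with $p\in\beta\w\setminus\w$ for (iv), and the same reduction to the Fr\'echet--Urysohn property of compacta for (v). The only notable difference is that for the ``consequently'' in (iv) the paper simply cites an external result (Corollary~2.2 of \cite{Gabr-lc-Ck}) to conclude that $\w\cup\{p\}$ is not sequentially Ascoli, whereas you supply a direct self-contained argument via the sequence $f_n=\chi_{\{n\}}$; and for (v) the paper invokes its preceding Proposition~\ref{p:sk-space-charact} (taking $M_K=K$) while you unfold that proposition's content explicitly. Both differences are cosmetic rather than mathematical.
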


\begin{proof}
(i) and (ii) follow from the corresponding definitions.

(iii) The compact space $\beta\NN$ is a  $k_\IR$-space which is not an $sk$-space because any function on $\beta\NN$ is $s$-continuous.

(iv) Let $x\in\beta\NN \SM\NN$. Then the space $X=\{x\}\cup\NN$ is an $sk$-space. By Corollary 2.2 of \cite{Gabr-lc-Ck}, the space $Y$ is not sequentially Ascoli. In particular, $X$ is not a $k_\IR$-space.

(v) follows from Proposition \ref{p:sk-space-charact} in which $M_K=K$ for every $K\in\KK(X)$.
\end{proof}

\begin{proposition} \label{p:sk:cp}
Let $Y$ be a dense subspace of a space $X$, and assume that $Y$ satisfies one of the following conditions:
\begin{enumerate}
\item[{\rm(i)}] $Y$ is countable (that is, $X$ is separable);
\item[{\rm(ii)}] $Y$ is a Lindel\"{o}f $\Sigma$-space;
\item[{\rm(iii)}] $\mathrm(MA+\lnot CH)$ $Y^n$ is Lindel\"{o}f for each $n\in\w$;
\item[{\rm(iv)}] $\mathrm(PFA)$ $Y$ is Lindel\"{o}f.
\end{enumerate}
Then $C_p(X)$ is a hereditary $sk$-space.
\end{proposition}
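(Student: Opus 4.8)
The plan is to reduce all four cases to one statement: \emph{every compact subspace of $C_p(X)$ is Fr\'echet--Urysohn}. Granting this, Proposition \ref{p:sk-space}(v) immediately yields that $C_p(X)$ is a hereditary $sk$-space. To exploit the hypotheses on the dense subspace $Y$, I first transfer the problem to $C_p(Y)$: the restriction map $\pi\colon C_p(X)\to C_p(Y)$, $\pi(f)=f\rst Y$, is continuous and, because $Y$ is dense in $X$, injective; hence for every compact $K\subseteq C_p(X)$ the restriction $\pi\rst K$ is a continuous bijection from the compact space $K$ onto the Hausdorff space $\pi(K)\subseteq C_p(Y)$, and is therefore a homeomorphism. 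So $K$ is homeomorphic to a compact subspace of $C_p(Y)$, and it suffices to prove that \emph{every compact subspace of $C_p(Y)$ is Fr\'echet--Urysohn} under each of the four hypotheses.

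For (i), $Y$ is countable, so $C_p(Y)$ sits inside the metrizable space $\IR^Y$; every compact subspace of $C_p(Y)$ is then metrizable, hence Fr\'echet--Urysohn. For (ii), when $Y$ is a Lindel\"of $\Sigma$-space, I would invoke the classical fact that every compact subspace of $C_p(Y)$ is Gul'ko compact, together with the theorem that Gul'ko compacta are Fr\'echet--Urysohn. For (iii), under $\mathrm{MA}+\lnot\mathrm{CH}$ with $Y^n$ Lindel\"of for all $n$, the Arhangel'skii--Pytkeev theorem gives that $C_p(Y)$ has countable tightness, so each compact $K\subseteq C_p(Y)$ does too; I would then apply the set-theoretic fact that under $\mathrm{MA}+\lnot\mathrm{CH}$ a countably tight compact subspace of such a $C_p(Y)$ is Fr\'echet--Urysohn. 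For (iv), under $\mathrm{PFA}$ with $Y$ Lindel\"of, I would combine Balogh's theorem (under $\mathrm{PFA}$ every countably tight compact space is sequential) with the $C_p$-theory ensuring that compact subspaces of $C_p(Y)$ for Lindel\"of $Y$ are countably tight --- and indeed angelic --- which forces such sequential compacta to be Fr\'echet--Urysohn.

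The reduction and cases (i)--(ii) are routine; the real obstacle is cases (iii) and (iv). The delicate point is that Lindel\"ofness of $Y$ or of its finite powers does not by itself force the compact subsets of $C_p(Y)$ to be Fr\'echet--Urysohn in $\mathrm{ZFC}$, so one genuinely needs the stated consistency hypotheses to pass from ``countable tightness'' (or ``sequentiality'') of those compacta to the Fr\'echet--Urysohn property; I would take care to state precisely which theorem of Balogh (and of Fremlin, for the $\mathrm{MA}+\lnot\mathrm{CH}$ variant) and which $C_p$-theoretic fact about compact subsets of $C_p(Y)$ for Lindel\"of $Y$ are invoked, since this is the least elementary step. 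Finally, it is worth noting that the Fr\'echet--Urysohn conclusion is used only through sequentiality: a sequential compact space is an $s_\IR$-space and hence an $sk$-space, which is all that Propositions \ref{p:sk-space-charact} and \ref{p:sk-space}(v) require; so in (iii)--(iv) it would in fact suffice to produce sequential compact subspaces of $C_p(Y)$.
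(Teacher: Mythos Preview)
Your reduction and cases (i)--(ii) coincide with the paper's proof: reduce to showing every compact $K\subseteq C_p(X)$ is Fr\'echet--Urysohn (Proposition~\ref{p:sk-space}(v)), transfer $K$ to $C_p(Y)$ via the injective restriction map, and then use metrizability of $C_p(Y)$ in~(i) and Gul'ko compactness in~(ii).

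For (iii) and (iv) your outline is correct but the citations diverge from the paper's, and your attributions are somewhat off. In (iii) the paper does not invoke a ``Fremlin'' result; instead, after the Arhangel'skii--Pytkeev step it applies Okunev's theorem \cite[Theorem~5]{Okunev1995} (valid under $\mathrm{MA}+\lnot\mathrm{CH}$) that every separable subset of such a $K$ is metrizable, and then countable tightness plus ``countable subsets are metrizable'' immediately gives Fr\'echet--Urysohn. In (iv) the paper likewise avoids Balogh: it cites Arhangel'skii \cite[IV.11.14]{Arhangel} for countable tightness of $K$ under $\mathrm{PFA}$ when $Y$ is Lindel\"of, and then Okunev--Reznichenko \cite[Theorem~1.8]{OkunevReznichenko2007} for the same ``separable $\Rightarrow$ metrizable'' conclusion. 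Your Balogh route for (iv) is a legitimate alternative, but note that you still need the Arhangel'skii $\mathrm{PFA}$ result to get countable tightness of $K$ in the first place --- this is \emph{not} a ZFC $C_p$-fact for merely Lindel\"of $Y$ --- and your ``angelic'' claim is unnecessary (and unclear in this generality) once you observe, as you do, that sequentiality of $K$ already suffices via Proposition~\ref{p:sk-space-charact}.
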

\begin{proof}
By Proposition \ref{p:sk-space}(ii), it suffices to prove that each compact subspace $K$ of $C_p(X)$ is Fr\'{e}chet--Urysohn. Since $C_p(X)$ is condensed onto some subset of $C_p(Y)$, $K$ is homeomorphic to some compact subset of $C_p(Y)$.

(i) Since $C_p(Y)$ is a metrizable space,  $K$ is even metrizable.

(ii) By Proposition IV.9.10 of  \cite{Arhangel}, $K$ is a Gul'ko compact space. Thus $K$ is Fr\'{e}chet--Urysohn.

(iii) By the Arhangelskii--Pytkeev theorem \cite[II.1.1]{Arhangel}, the space $C_p(Y)$  has countable tightness. Therefore also $K$ has countable tightness. It follows from Theorem 5 of \cite{Okunev1995} that every separable subset of $K$ is metrizable. Thus $K$ is Fr\'{e}chet--Urysohn.

(iv) By Theorem IV.11.14 of \cite{Arhangel}, the compact space $K$ has countable tightness. By Theorem 1.8 of \cite{OkunevReznichenko2007}, it follows that every separable subset of $K$ is metrizable. Thus $K$ is Fr\'{e}chet--Urysohn.
\end{proof}





\section{Probability measures on $C_p(X,[-1,1])$, universally $\kappa$-measurable and universal $\kappa$-measure zero spaces} \label{sec:func-cpb}


In this section we prove three propositions which allows us to study $C_p$-spaces in Section \ref{sec:Cp-kR-sR}.

\begin{proposition} \label{p:mu-P-sigma-discrete}
Let $X$ be a space, and let $\mu\in P(\beta X)$. Then $\mu$ is continuous on $C_p(X,[-1,1])$ if, and only if, $\mu\in P_d(X)$.
\end{proposition}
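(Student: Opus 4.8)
The plan is to identify the measure $\mu$ with the integration functional $L_\mu\colon f\mapsto\int_{\beta X}(\beta f)\,d\mu$ on $C_p(X,[-1,1])$ — where $\beta f\in C(\beta X,[-1,1])$ is the canonical extension of $f$, equivalently $L_\mu(f)=\int_X f\,d\mu$ under the Baire-measure description of $P(\beta X)$ — and to prove the two implications directly.

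For the sufficiency I would start from $\mu=\sum_{n\ge1}\lambda_n\delta_{x_n}\in P_d(X)$ (the $x_n\in X$ distinct, $\lambda_n>0$, $\sum_n\lambda_n=1$), observe that $L_\mu(f)=\sum_n\lambda_n f(x_n)$ since $(\beta f)(x_n)=f(x_n)$, and note that the partial sums $f\mapsto\sum_{n\le N}\lambda_n f(x_n)$ are finite linear combinations of evaluation maps, hence continuous on $C_p(X,[-1,1])$. Since $\bigl|L_\mu(f)-\sum_{n\le N}\lambda_n f(x_n)\bigr|\le\sum_{n>N}\lambda_n\to0$ uniformly in $f$ (because $|f|\le1$), $L_\mu$ is a uniform limit of continuous functions, hence continuous.

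For the necessity I would use only the continuity of $L_\mu$ at the zero function. Given $\varepsilon>0$, choose a finite $F\subseteq X$ and $\delta>0$ with $|L_\mu(f)|<\varepsilon$ for all $f\in[F;\delta]\cap C(X,[-1,1])$, and claim that then $\mu(F)\ge1-\varepsilon$. Indeed, if $\mu(\beta X\setminus F)>\varepsilon$, inner regularity of the Radon measure $\mu$ on the compactum $\beta X$ yields a compact $K\subseteq\beta X\setminus F$ with $\mu(K)>\varepsilon$; normality of $\beta X$ together with Urysohn's lemma gives $g\in C(\beta X,[0,1])$ with $g{\restriction}_F\equiv0$ and $g{\restriction}_K\equiv1$, whence $g{\restriction}_X\in[F;\delta]\cap C(X,[-1,1])$ but $L_\mu(g{\restriction}_X)=\int_{\beta X}g\,d\mu\ge\mu(K)>\varepsilon$, a contradiction. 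Applying this with $\varepsilon=1/n$ gives finite sets $F_n\subseteq X$ with $\mu(F_n)\ge1-\tfrac1n$, so $A:=\bigcup_{n\in\NN}F_n$ is a countable subset of $X$ with $\mu(A)=1$, i.e. $\mu\in P_d(X)$.

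I expect the necessity direction to be the only delicate point: the task is to squeeze out of continuity at the single point $0$ the conclusion that $\mu$ is, up to $\varepsilon$, concentrated on a \emph{finite} subset of $X$. The device is to test $L_\mu$ against Urysohn functions on $\beta X$ that vanish on the finite ``control set'' $F$ and equal $1$ on a compact set absorbing the putative stray mass, using $\beta(g{\restriction}_X)=g$ to check that such restrictions are admissible test elements of $C_p(X,[-1,1])$. This argument simultaneously rules out a nonzero atomless part of $\mu$ and any atom of $\mu$ sitting in $\beta X\setminus X$, so no case distinction is needed.
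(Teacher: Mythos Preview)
Your proof is correct. Both directions are handled properly: the sufficiency via uniform convergence of partial sums is essentially identical to the paper's argument, and for the necessity your Urysohn-function test on $\beta X$ is sound (the finite set $F$ is closed in $\beta X$, the Radon measure $\mu$ is inner regular on the open complement, and $g{\restriction}_X$ lands in $[F;\delta]\cap C(X,[-1,1])$ as required).

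The paper takes a different route for the necessity. It invokes Arhangel'skii's Factorization Lemma: since $\mu$ is continuous on the dense subspace $C_p(X,[-1,1])$ of $[-1,1]^X$, it factors through the restriction map $\pi_A$ to some countable $A\subseteq X$. The paper then argues by contradiction that $\mu(A)=1$: if not, it splits $\mu=(1-\lambda)\tilde\mu+\lambda\nu$ with $\tilde\mu$ supported on $A$ and $\nu(A)=0$, observes that $\nu$ also factors through $\pi_A$, and only then applies an Urysohn-type argument (to $\nu$, using a finite $B\subseteq A$ coming from continuity of the factored map at $\mathbf{0}_A$). Your approach is more elementary and more direct: you extract the finite ``control set'' $F$ straight from continuity of $\mu$ at $0$ and immediately show $\mu$ is $(1-\varepsilon)$-concentrated on $F$, bypassing the Factorization Lemma and the decomposition $\mu=(1-\lambda)\tilde\mu+\lambda\nu$ entirely. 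The paper's detour through factorization buys nothing here; your argument is the cleaner one for this particular statement.
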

\begin{proof}
Let $I=[-1,1]$ and $Y=C_p(X,[-1,1])$.

$(\La)$
Since $\mu\in P_d(X)$ then $\mu=\sum_{n=1}^\infty \lambda_n \delta_{x_n}$, where $x_n\in X$, $\delta_{x_n}$ is the Dirac measure at $x_n$, $\lambda_n\geq 0$ and $\sum_{n=1}^\infty \lambda_n=1$. Each measure $\delta_x$ is continuous and bounded by unity on $Y$. Then the series $\sum_{n=1}^\infty \lambda_n \delta_{x_n}$ converges uniformly to $\mu$. Therefore, $\mu$ is continuous.

$(\Ra)$ Since $\mu$ is continuous on $Y$ and $Y$ is dense in $I^X$, it follows from Factorization Lemma 0.2.3 of \cite{Arhangel} that there exists a countable $A\subseteq X$, a continuous function $\psi\: C_p(X|Y,I)\to \IR$, such that $\mu\rst Y=\psi\circ \pi_A$, where
\[
\pi_A\: Y \to C_p(X|Y,I),\ f \mapsto f{\restriction}_{A}
\]
is the projection and $C_p(X|A,I)=\pi_A(Y)$. If $\mu(A)=1$, then $\mu\in P_d(X)$.

We prove that $\mu(A)=1$. Assume the contrary, that is, $\mu(A)<1$. Then there exist $\tilde\mu,\nu\in P(\beta X)$ such that $\mu = (1-\lambda)\tilde\mu + \lambda\nu$, $\tilde\mu(A)=1$ and $\nu(A)=0$, where $\lambda=1-\mu(A)>0$. Since $A\subseteq X$ is countable, we have $\tilde\mu\in P_d(X)$. It follows from the proved implication $(\La)$ that $\tilde\mu\rst Y$ is continuous. Consequently, the functions $\nu{\restriction}_Y$ and
\[
\varphi\: C_p(X|A,I)\to \IR,\ f\mapsto \tfrac{1}{\lambda} \big(\psi(f) - (1-\lambda)\tilde\mu(f)\big)
\]
are continuous. Then $\nu{\restriction}_Y= \varphi\circ \pi_A$. Let $\mathbf{0}_A$ be the zero function on $A$. Since $\mathbf{0}_A\in C_p(X|A,I)$ and $\varphi(\mathbf{0}_A)=0$, there exists a finite $B\subseteq A$ and $\e>0$ such that $\varphi(W)\subseteq (-\tfrac{1}{2},\tfrac{1}{2})$ for the neighborhood
\[
W = \big\{ f\in C_p(X|A,I) : f(B)\subseteq (-\e,\e) \big\}
\]
of $\mathbf{0}_A$. Since $\nu(B)=0$, there exists a neighborhood $U$ of $B$ in $\bt X$ such that $\nu(U)<\tfrac{1}{2}$. Let $\hat g\in C_p(\beta X, [0,1])$ be a function such that $\hat g(B)=\{0\}$ and $\hat b(\beta X\setminus U)=1$. Put $g=\hat g{\restriction}_X$. Then $g(B)=\{0\}$ and $\nu(g)\geq \nu(\beta X\setminus U)> \tfrac{1}{2}$. On the other hand, since $g(B)=\{0\}$, we have $\pi_A(g)\in W$ and therefore $\nu(g)=\varphi\big(\pi_A(g)\big)\in (-\tfrac{1}{2},\tfrac{1}{2})$. This contradiction finishes the proof.
\end{proof}

\begin{proposition} \label{p:continuity-measure-compact-1}
Let $X$ be  a space, and let $\mu\in P(X)$. Then $\mu$ is $k$-continuous on $C_p\big(X,[-1,1]\big)$.
\end{proposition}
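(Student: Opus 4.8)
The plan is to show that for every compact $K\subseteq C_p(X,[-1,1])$ the restriction $\mu{\restriction}_K$ is continuous, which is exactly the assertion that $\mu$ is $k$-continuous on $C_p(X,[-1,1])$. Since $\mu\in P(X)$, there is a $\sigma$-compact set $S=\bigcup_{n\in\NN}S_n\subseteq X$ (with each $S_n$ compact) such that $\mu(\beta X\setminus S)=0$; moreover we may assume $S_1\subseteq S_2\subseteq\cdots$. The key point is that on $C_p(X,[-1,1])$ the measure $\mu$ depends only on the values of functions on $S$, up to $\e$, in a controlled way: for any $\e>0$ pick $n$ with $\mu(\beta X\setminus S_n)<\e$, and then for $f,g\in C_p(X,[-1,1])$ agreeing on $S_n$ one has $|\mu(f)-\mu(g)|\le 2\mu(\beta X\setminus S_n)<2\e$. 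So $\mu$ is approximated uniformly on $C_p(X,[-1,1])$ by the functionals $f\mapsto \int_{S_n} f\,d\mu$, and it suffices to prove that each of these is continuous on every compact $K\subseteq C_p(X,[-1,1])$.

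First I would fix a compact $K\subseteq C_p(X,[-1,1])$. The restriction map $r_n\colon C_p(X,[-1,1])\to C_p(S_n,[-1,1])$ is continuous, so $r_n(K)$ is a compact subset of $C_p(S_n,[-1,1])$; since $S_n$ is compact, $r_n(K)$ is equicontinuous by the classical Ascoli theorem, hence the topology that $C_p(S_n)$ induces on $r_n(K)$ coincides with the topology of uniform convergence on $S_n$. Thus on $r_n(K)$ the functional $g\mapsto \int_{S_n} g\,d(\mu{\restriction}_{S_n})$ is continuous: if $g_\alpha\to g$ uniformly on $S_n$ then $\int_{S_n} g_\alpha\,d\mu\to \int_{S_n} g\,d\mu$ because $\mu{\restriction}_{S_n}$ is a finite measure. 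Composing with the continuous map $r_n{\restriction}_K\colon K\to r_n(K)$, the functional $f\mapsto \int_{S_n} f\,d\mu$ is continuous on $K$.

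Finally I assemble the pieces: on the compact set $K$ the functional $\mu$ is the uniform limit of the continuous functionals $f\mapsto\int_{S_n}f\,d\mu$ (by the first-paragraph estimate, using that $\mu(\beta X\setminus S_n)\to 0$ as $n\to\infty$ since $S_n\uparrow S$ and $\mu(\beta X\setminus S)=0$), and a uniform limit of continuous real-valued functions on a compact space is continuous. Hence $\mu{\restriction}_K$ is continuous, and as $K$ was an arbitrary compact subset, $\mu$ is $k$-continuous on $C_p(X,[-1,1])$.

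The main obstacle is the middle step — identifying, on a compact subset of $C_p(S_n,[-1,1])$, the pointwise topology with the uniform topology. This is where compactness of $S_n$ is essential (via equicontinuity), and it is also the only place one must be careful that $K$ is compact in $C_p$ rather than merely bounded. Everything else is routine: the $\e$-truncation argument and the uniform-limit argument are elementary, and the decomposition of $S$ into an increasing sequence of compacta uses only that $\mu\in P(X)$ and the definition of a Radon measure concentrated on a $\sigma$-compact set.
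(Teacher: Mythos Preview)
Your overall architecture matches the paper's: use the $\sigma$-compact support of $\mu$, restrict to compact pieces $S_n$, show that the partial integrals are continuous on any compact $K\subseteq C_p(X,[-1,1])$, and assemble by uniform convergence. The paper writes $\mu$ as a uniformly convergent series $\sum_n\lambda_n\mu_n$ over the disjoint pieces $Q_n\setminus Q_{n-1}$ rather than as an increasing approximation $\int_{S_n}\cdot\,d\mu$, but that difference is cosmetic.

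The genuine gap is in your middle step. You assert that, since $S_n$ is compact, the compact set $r_n(K)\subseteq C_p(S_n,[-1,1])$ is equicontinuous ``by the classical Ascoli theorem'', and hence on $r_n(K)$ the pointwise and uniform topologies agree. This is false in general. Take $S_n=\{0\}\cup\{1/m:m\in\NN\}\subseteq\IR$ and $H=\{\mathbf{0}\}\cup\{e_m:m\in\NN\}\subseteq C_p(S_n,[-1,1])$, where $e_m(1/m)=1$ and $e_m$ vanishes elsewhere. Each $e_m$ is continuous (the point $1/m$ is isolated), and $e_m\to\mathbf{0}$ pointwise, so $H$ is compact in $C_p(S_n,[-1,1])$; but $H$ is not equicontinuous at $0$, and $e_m\not\to\mathbf{0}$ uniformly. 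The Ascoli theorem runs the other way: it tells you that \emph{norm}-compact (equivalently, $\CC$-compact) subsets of $C(S_n)$ are equicontinuous, not pointwise-compact ones.

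What does work---and what the paper invokes---is the Eberlein--\v{S}mulyan--Grothendieck theorem: on a uniformly bounded, pointwise compact subset of $C(Z)$ with $Z$ compact, the topology of pointwise convergence coincides with the \emph{weak} topology of the Banach space $C(Z)$. Since $g\mapsto\int_{S_n}g\,d(\mu{\restriction}_{S_n})$ is a bounded linear functional on $C(S_n)$, it is weakly continuous, hence continuous on $r_n(K)$ for the pointwise topology. With this replacement your uniform-limit argument goes through, and the proof becomes essentially the paper's.
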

\begin{proof}
Let $K\subseteq C_p\big(K,[-1,1]\big)$ be a compact subspace. Since $\mu\in P(X)$, there is a $\sigma$-compact subset $Q$ of $X$ such that $\mu(Q)=1$. Let $Q=\bigcup_{n\in\w} Q_n$, where $Q_n$ is compact and $Q_n\subseteq Q_{n+1}$ for each $n\in\w$. Set $Q_{-1}:=\emptyset$, and for every $n\in\w$, define $\lambda_n :=\mu(Q_n\SM Q_{n-1})$ and
\[
\pi_n: C_p(X) \to C_p(Q_n), \quad \pi_n(f):=f\rst{Q_n}.
\]
For every $n\in\w$, choose $\mu_n\in P(Q_n\SM Q_{n-1})$ such that $\mu\rst{Q_n\SM Q_{n-1}} =\lambda_n \mu_n$. Recall that if $Z$ is a compact space, then, by the Eberlein--\v{S}mulyan--Grothendieck theorem \cite[3.139]{fabian-10}, the topology of pointwise convergence coincides with the weak topology on all uniformly bounded weakly pointwise compact subsets of the Banach space $C(Z)$. Therefore, for every $n\in\w$ and  considering $\mu_n$ as a continuous functional of the Banach space $C(Q_n)$, we obtain that $\mu_n$ is continuous on the compact set $\pi_n (K)\subseteq C_p(Q_n,[-1,1])$. Since $\mu_n(h)=\mu_n(\pi_n(h))$ for $h\in C_p(X,[-1,1])$, we obtain that $\mu_n$ is continuous on $K$ and $\|\mu_n{\restriction}_{K}\|\leq 1$. As $\sum_{n\in\w} \lambda_n =1$, it easily follows that the uniformly convergent series
\[
\mu\rst{K} =\sum_{n\in\w} \lambda_n \mu_n{\restriction}_{K}
\]
is continuous.
\end{proof}

\begin{proposition} \label{p:continuity-measure-compact}
Let $X$ be  a space, and let $\mu\in P(\beta X)$. Then $\mu$ is $s$-continuous on $C_p\big(X,[-1,1]\big)$ if, and only if, $\mu\in P_\sigma(X)$.
\end{proposition}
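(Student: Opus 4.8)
The plan is to prove both implications by reducing the statement about $s$-continuity on $C_p(X,[-1,1])$ to the already established facts about discrete measures (Proposition \ref{p:mu-P-sigma-discrete}) and Radon measures ($P(X)$) on the same space. Throughout, write $I=[-1,1]$ and $Y=C_p(X,I)$, and recall that $s$-continuity of $\mu$ on $Y$ means that $\mu$ restricts to a continuous function on every convergent sequence in $Y$ together with its limit.

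For the direction $(\Ra)$, assume $\mu\in P(\beta X)$ is $s$-continuous on $Y$ but $\mu\notin P_\sigma(X)$. By the description of $P_\sigma(X)$ recalled in Section \ref{sec:px}, $\mu\notin P_\sigma(X)$ means there is a compact $G_\delta$ set $K\subseteq\beta X\SM X$ with $\mu(K)>0$. Writing $K=\bigcap_{n\in\w}U_n$ with $U_n$ open in $\beta X$, and choosing functions $\hat g_n\in C_p(\beta X,[0,1])$ that are $0$ on $K$ and $1$ off $U_n$ with $\hat g_n\downarrow \chi_{\beta X\SM K}$ on $X$ (here I use that $K\cap X=\es$), I obtain a sequence $g_n:=\hat g_n\rst X\in Y$ (after renormalizing to $I$) that converges pointwise on $X$ to the constant function $\mathbf 1$ but with $\mu(g_n)=\int g_n\, d\mu \le \mu(\beta X\SM U_n)\cdot 1 + \text{(small)}$, so $\limsup_n \mu(g_n)\le 1-\mu(K)<1=\mu(\mathbf 1)$. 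Thus $\{g_n\}\cup\{\mathbf 1\}$ is a convergent sequence in $Y$ on which $\mu$ is discontinuous — contradicting $s$-continuity. Hence $\mu\in P_\sigma(X)$. The technical care needed here is to make the $g_n$ genuinely converge in $Y$ (monotone pointwise convergence to a continuous limit suffices) and to bound $\mu(g_n)$ using the regularity of $\mu\in P(\beta X)$ on the algebra of Baire sets; this is the step I expect to require the most attention.

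For the direction $(\La)$, suppose $\mu\in P_\sigma(X)$. By Proposition \ref{p:px:ci}-style reasoning one cannot in general land in $P_d(X)$, so instead I argue directly. Let $\{f_n\}\cup\{f\}$ be a convergent sequence in $Y$. The set $L:=\overline{\{f_n:n\in\w\}}\cup\{f\}$ is a compact metrizable subspace of $Y$. Consider the condensation $\varphi=\bigtriangleup\{f_n\}\cup\{f\}: X\to \IR^{\w}$, and let $Z=\varphi(X)\subseteq\IR^\w$ be its image, a separable metrizable (hence Souslin, hence by Proposition \ref{p:px:3} universally $\sigma$-measurable) space. The measure $\mu$ pushes forward via $\beta\varphi$ to a measure $\nu=P(\beta\varphi)(\mu)$, which lies in $P_\sigma(Z)=P(Z)$ by Proposition \ref{p:px:ci}; so $\nu$ is Radon on $Z$. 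Now each $f_n$ and $f$ factor through $\varphi$ as continuous functions on $Z$ (restricted to $[-1,1]$-valued), and $\mu(f_n)=\nu(\tilde f_n)$, $\mu(f)=\nu(\tilde f)$. Since $\nu\in P(Z)$ and $\{\tilde f_n\}\cup\{\tilde f\}$ is a convergent sequence in $C_p(Z,I)$, Proposition \ref{p:continuity-measure-compact-1} (applied to the space $Z$) gives that $\nu$ is $k$-continuous, hence in particular continuous on this convergent sequence, so $\mu(f_n)=\nu(\tilde f_n)\to\nu(\tilde f)=\mu(f)$. Therefore $\mu$ is $s$-continuous on $Y$.

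Assembling the two directions completes the proof. The main obstacle, as noted, is the construction in $(\Ra)$: producing from a positive-mass compact $G_\delta$ subset of $\beta X\SM X$ an explicit pointwise-convergent sequence in $C_p(X,[-1,1])$ that witnesses the failure of $s$-continuity, and controlling $\mu$ on it via regularity. The $(\La)$ direction is essentially bookkeeping once one invokes the Souslin/universally $\sigma$-measurable machinery of Section \ref{sec:px} together with Proposition \ref{p:continuity-measure-compact-1}.
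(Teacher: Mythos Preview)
Your $(\Ra)$ direction is essentially the paper's argument (the paper uses functions converging to $\chi_K$ rather than to $\chi_{\beta X\SM K}$, but this is immaterial; your monotonicity arrow should be $\uparrow$ rather than $\downarrow$, and no ``renormalizing to $I$'' is needed since $[0,1]\subseteq I$).

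There is, however, a genuine gap in your $(\La)$ direction. You assert that $Z=\varphi(X)\subseteq\IR^\w$ is ``separable metrizable (hence Souslin)''. This implication is false: a Souslin space is a continuous image of a Polish space, and an arbitrary subset of $\IR^\w$ need not be Souslin (e.g., a Bernstein set, or any non-analytic set). Without $Z$ being Souslin you cannot invoke Proposition~\ref{p:px:3}, and so you have no reason to conclude that the push-forward $\nu$ lies in $P(Z)$ rather than merely in $P_\sigma(Z)$. Replacing $Z$ by its closure in $I^{\w+1}$ does not help either, because on the closure the coordinate projections $\pi_n$ need no longer converge to $\pi_\w$.

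The paper fixes exactly this point by mapping not into the image $\varphi(X)$ but into the \emph{full} space $Y=C_p(K,[-1,1])$, where $K$ is the given metrizable compact (for you, the convergent sequence $\{f_n\}\cup\{f\}$). Since $C(K)$ with the sup norm is a Polish Banach space and the identity $C(K)\to C_p(K)$ is a continuous bijection, $C_p(K)$ is Souslin, and its closed subspace $Y$ is Souslin as well; now Proposition~\ref{p:px:3} applies legitimately. On $Y$ the evaluations at the points $f_n\in K$ do converge to evaluation at $f$ in $C_p(Y,I)$ (precisely because elements of $Y$ are continuous on $K$), and your remaining bookkeeping then goes through verbatim. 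In short: keep your strategy, but change the target of $\varphi$ from $\varphi(X)$ to $C_p(K,I)$. (Also, $\varphi$ is not a condensation in general, only a continuous map; but your argument never uses injectivity.)
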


\begin{proof}
$(\Ra)$ Suppose for a contradiction that $\mu\notin P_\sigma(X)$. Then $\mu(K)>0$ for some zero-set $K\subset \beta X\SM X$. Therefor, we can find a sequence $(f_n)_n\subseteq C_p(\beta X,[0,1])$ that converges pointwise to the characteristic function $\chi_K$ of $K$. Put $g_n:=f_n\rst{X}$. Then $(g_n)_n$ converges pointwise to the zero function $\mathbf{0}_X$ on $X$ and $\lim_{n\to\infty} \mu(g_n)=\mu(K)>0$. Therefore $\mu$ is discontinuous on the sequence $\{\mathbf{0}_X\}\cup\{g_n:n\in\w\}$. A contradiction.
\smallskip

$(\La)$ Let $K\subseteq C_p\big(K,[-1,1]\big)$ be a metrizable compact subspace. Set $Y:=C_p\big(K,[-1,1]\big)$ and define
\[
j: X\to Y, \quad j(x)(f):=f(x).
\]

The Banach space $C(K)$ is Polish. Therefore the space $C_p(K)$ is Souslin, and hence its closed subspace $Y$ is Souslin as well. Therefore, by Proposition \ref{p:px:3}, $Y$ is universally measurable.
From Proposition \ref{p:px:ci} it follows that $\nu=P(\beta j)(\mu)\in P(Y)$. The measure $\nu$ is determined by the fact that
\[
\nu(g) = \mu(g\circ j)\ \ \text{ for any }g\in C_p(Y,[-1,1]).
\]
Let
\[
i:K\to C_p(Y,[-1,1]),\ \ i(f)(h):=h(f),
\]
be the canonical embedding. Then $i(K)$ is a compact subset of $C_p(Y,[-1,1])$.
From Proposition \ref{p:continuity-measure-compact-1} it follows that the restriction $\nu\rst{i(K)}$ of $\nu$ onto $i(K)$ is continuous.

We claim that $\mu{\restriction}_K = \nu{\restriction}_{i(K)} \circ i$, that is, $(\nu\circ i)(f)=\mu(f)$ for $f\in K$. For every $x\in X$, we have
\[
(i(f)\circ j)(x)=i(f)(j(x))=j(x)(f)=f(x).
\]
Therefore $i(f)\circ j=f$. Hence
\[
(\nu\circ i)(f)= \nu(i(f))=\mu(i(f)\circ j)=\mu(f)\quad (f\in K)
\]
that proves the claim.

Since $\mu{\restriction}_K =  \nu{\restriction}_{i(K)} \circ i$ and $\nu{\restriction}_{i(K)}$ is continuous, it follows that $\mu$ is continuous on $K$.
\end{proof}

For a space $X$, we set
\[
P_\kappa(X):=\big\{ \mu\in P(\beta X): \mu\text{ is $k$-continuous on }C_p\big(X,[-1,1]\big)\big\}.
\]
Propositions \ref{p:continuity-measure-compact-1} and \ref{p:continuity-measure-compact} immediately imply the following assertion.

\begin{proposition} \label{p:psi=pka}
Let $X$ be a space. Then:
\begin{enumerate}
\item[{\rm(i)}] $P(X)\subseteq P_\kappa(X) \subseteq P_\sigma(X)$;
\item[{\rm(ii)}]  if $C_p\big(X,[-1,1]\big)$ is an $sk$-space, then $P_\kappa(X)=P_\sigma(X)$.
\end{enumerate}
\end{proposition}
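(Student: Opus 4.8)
The plan is to derive both inclusions of part (i) directly from the two preceding propositions, and then obtain part (ii) by combining (i) with the definition of an $sk$-space. For the inclusion $P(X)\subseteq P_\kappa(X)$: if $\mu\in P(X)$, then by Proposition \ref{p:continuity-measure-compact-1} the functional $\mu$ is $k$-continuous on $C_p(X,[-1,1])$, which is exactly the defining condition for membership in $P_\kappa(X)$. For the inclusion $P_\kappa(X)\subseteq P_\sigma(X)$: suppose $\mu\in P(\beta X)$ is $k$-continuous on $C_p(X,[-1,1])$. Every convergent sequence (together with its limit) is a compact subset of $C_p(X,[-1,1])$, so $k$-continuity of $\mu$ implies that $\mu$ is continuous on each such compact set, i.e.\ $\mu$ is $s$-continuous on $C_p(X,[-1,1])$. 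By the forward implication of Proposition \ref{p:continuity-measure-compact} this gives $\mu\in P_\sigma(X)$. This establishes the chain $P(X)\subseteq P_\kappa(X)\subseteq P_\sigma(X)$.

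For part (ii), assume $C_p(X,[-1,1])$ is an $sk$-space. In view of (i) it suffices to prove $P_\sigma(X)\subseteq P_\kappa(X)$. Let $\mu\in P_\sigma(X)$. By the backward implication of Proposition \ref{p:continuity-measure-compact}, $\mu$ is $s$-continuous on $C_p(X,[-1,1])$. Since $C_p(X,[-1,1])$ is an $sk$-space, every $s$-continuous real-valued function on it is $k$-continuous; applying this to the functional $\mu$ (which is a real-valued function on $C_p(X,[-1,1])$), we conclude that $\mu$ is $k$-continuous on $C_p(X,[-1,1])$, i.e.\ $\mu\in P_\kappa(X)$. Hence $P_\kappa(X)=P_\sigma(X)$.

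The only point requiring a small amount of care — and the closest thing to an obstacle — is the passage, in the proof of the second inclusion of (i), from $k$-continuity on $C_p(X,[-1,1])$ to $s$-continuity on the same space: one must note that a convergent sequence with its limit is a compact subspace of $C_p(X,[-1,1])$, so that the hypothesis of $k$-continuity specializes to continuity on exactly those compacta relevant to $s$-continuity. Everything else is a direct invocation of Propositions \ref{p:continuity-measure-compact-1} and \ref{p:continuity-measure-compact} together with the definition of $P_\kappa(X)$ and of an $sk$-space.
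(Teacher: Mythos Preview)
Your proof is correct and follows exactly the approach the paper indicates: the paper simply states that the proposition follows immediately from Propositions~\ref{p:continuity-measure-compact-1} and~\ref{p:continuity-measure-compact}, and you have spelled out precisely how these propositions combine (together with the elementary observation that $k$-continuity implies $s$-continuity, and the definition of an $sk$-space) to yield both parts.
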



Being motivated by the notions of universally $\si$-measurable spaces and universal $\si$-measure zero spaces, we shall say that a space $X$ is
\begin{enumerate}
\item[$\bullet$] {\em universally $\kappa$-measurable} if $P_\kappa(X)=P(X)$;
\item[$\bullet$]  {\em universal $\kappa$-measure zero} if $P_\kappa(X)=P_d(X)$.
\end{enumerate}
Clearly, any universal $\kappa$-measure zero space is universally $\kappa$-measurable.


\section{$C_p(X)$ which are $k_\IR$-spaces and $s_\IR$-spaces} \label{sec:Cp-kR-sR}


This section is devoted to the $C_p$-part of Problem \ref{prob:Cp-kR-sR}. Following \cite{GR-kappa}, a space $X$ is called a {\em $\kappa$-space} if it is closely embedded into $C_p(Y)$ for some $k_\IR$-space $Y$. For numerous characterizations of $\kappa$-spaces, see \cite{GR-kappa}.

\begin{proposition}\label{p:Cp-kR-necessary}
If $C_p(X)$ is a $k_\IR$-space, then $X$ has the property $(\kappa)$, $X$ is universal $\kappa$-measure zero, and $X$ is a $\kappa$-space whose compact subsets are scattered.
\end{proposition}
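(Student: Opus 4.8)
The plan is to verify the three conclusions separately, exploiting the fact that $C_p(X)$ being a $k_\IR$-space means, by Theorem~\ref{t:kR-classical}, that $C_k(C_p(X))$ is complete, and — more usefully here — that every $k$-continuous functional on $C_p(X)$ is continuous. First I would recall the known chain of implications: a $k_\IR$-space is Ascoli (see the diagram in Section~\ref{sec:preliminary}), so if $C_p(X)$ is a $k_\IR$-space then $C_p(X)$ is Ascoli, and hence by Theorem~\ref{t:Cp-Ascoli} the space $X$ has the property $(\kappa)$. This disposes of the first assertion essentially for free.

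For the measure-theoretic conclusion, the key is to pass to the bounded part. Restriction gives a natural continuous map $C_p(X)\to C_p(X,[-1,1])$, and since $C_p(X,[-1,1])$ is a retract of $C_p(X)$ (via truncation $f\mapsto (f\wedge 1)\vee(-1)$, which is continuous on $C_p(X)$), any $k_\IR$-property transfers: a retract of a $k_\IR$-space is a $k_\IR$-space, and in particular is an $sk$-space trivially (Proposition~\ref{p:sk-space}(i)). Thus $C_p(X,[-1,1])$ is an $sk$-space, so by Proposition~\ref{p:psi=pka}(ii) we get $P_\kappa(X)=P_\sigma(X)$. Now I invoke Proposition~\ref{p:mu-P-sigma-discrete}: every $\mu\in P(\beta X)$ that is \emph{continuous} on $C_p(X,[-1,1])$ lies in $P_d(X)$. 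But $C_p(X,[-1,1])$ being a $k_\IR$-space forces every $k$-continuous functional on it to be continuous, so $P_\kappa(X)$, the set of measures that are merely $k$-continuous on $C_p(X,[-1,1])$, coincides with the set of measures continuous there, which is $P_d(X)$ by Proposition~\ref{p:mu-P-sigma-discrete}. Combining, $P_\kappa(X)=P_d(X)$, i.e. $X$ is universal $\kappa$-measure zero, as claimed. (Equivalently: $P_\sigma(X)=P_\kappa(X)=P_d(X)$.)

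For the third assertion, that $X$ is a $\kappa$-space whose compact subsets are scattered: the first half is immediate from the definition of $\kappa$-space recalled just before the proposition — $X$ embeds (as itself, identifying $x$ with the evaluation functional $\delta_x$) closely into $C_p(C_p(X))$, and $C_p(X)$ is by hypothesis a $k_\IR$-space, so $X$ is a $\kappa$-space. (One should check the embedding $X\hookrightarrow C_p(C_p(X))$ via evaluations is closed; this is standard, e.g. the image is the set of functionals that are multiplicative/linear, which is closed.) For scatteredness of compact subsets: if $K\subseteq X$ is compact, then since $X$ is universal $\kappa$-measure zero we have in particular $P_\sigma(X)=P_d(X)$, hence $P(K)\subseteq P(X)\subseteq P_\sigma(X)=P_d(X)$, so $P(K)=P_d(K)$, meaning $K$ is universal $\sigma$-measure zero; by Proposition~\ref{p:px:sccs} a compact space with that property is scattered.

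\textbf{Main obstacle.} I expect the delicate point to be the reduction to the bounded function space: one must confirm that $C_p(X,[-1,1])$ genuinely inherits the $k_\IR$-property (respectively the $sk$-property) from $C_p(X)$, via the truncation retraction — the subtlety being that truncation $f\mapsto(f\wedge 1)\vee(-1)$ is continuous $C_p(X)\to C_p(X,[-1,1])$ and is a retraction, and that $sk$ (or $k_\IR$) passes to retracts, which follows because the composition of a $k$-continuous function on the retract with the retraction is $k$-continuous on the whole space. Once that retraction argument is in place, the rest is an orchestration of Propositions~\ref{p:mu-P-sigma-discrete}, \ref{p:psi=pka}, and \ref{p:px:sccs} together with Theorem~\ref{t:Cp-Ascoli}, with no further analytic input needed.
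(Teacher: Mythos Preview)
Your overall strategy works, but there is one genuine slip and one sloppy step to clean up.

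The slip: you claim that $C_p(X,[-1,1])$, being a $k_\IR$-space, is ``in particular an $sk$-space trivially (Proposition~\ref{p:sk-space}(i))''. But Proposition~\ref{p:sk-space}(i) says that every \emph{$s_\IR$-space} is an $sk$-space, not every $k_\IR$-space; and indeed Proposition~\ref{p:sk-space}(iii) exhibits $\beta\NN$ as a $k_\IR$-space which is not $sk$. So the appeal to Proposition~\ref{p:psi=pka}(ii) to get $P_\kappa(X)=P_\sigma(X)$ is unjustified. Fortunately you do not need it: your very next sentence already proves $P_\kappa(X)=P_d(X)$ directly, using only that $C_p(X,[-1,1])$ is $k_\IR$ (which does follow from your retraction argument, since a retraction is quotient and hence $R$-quotient, so Theorem~\ref{t:characterization-kR} applies) together with Proposition~\ref{p:mu-P-sigma-discrete}. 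Simply delete the $sk$/$P_\sigma$ detour and the parenthetical ``Equivalently: $P_\sigma(X)=P_\kappa(X)=P_d(X)$''.

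The sloppy step: in the scatteredness argument you write $P(K)\subseteq P(X)\subseteq P_\sigma(X)=P_d(X)$, but you only established $P_\kappa(X)=P_d(X)$. Replace $P_\sigma(X)$ by $P_\kappa(X)$ (the inclusion $P(X)\subseteq P_\kappa(X)$ is Proposition~\ref{p:psi=pka}(i)) and the chain is correct; from $P(K)\subseteq P_d(X)$ one reads off $P(K)=P_d(K)$, and Proposition~\ref{p:px:sccs} finishes.

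With these fixes your argument is complete and differs from the paper's in two places. For the universal $\kappa$-measure zero conclusion, the paper does not use a retraction; instead it observes that $C_p\big(X,(-2,2)\big)\cong C_p(X)$ is a $k_\IR$-space, scales to see that any $\mu\in P_\kappa(X)$ is $k$-continuous on $C_p\big(X,[-2,2]\big)$ and hence on the open subspace $C_p\big(X,(-2,2)\big)$, deduces continuity there, and then restricts to $C_p\big(X,[-1,1]\big)$. Your retraction route is a bit more direct and avoids the scaling manoeuvre. For scatteredness, the paper bypasses measures entirely: it uses that the property $(\kappa)$ is hereditary, so every compact $K\subseteq X$ has it, and then quotes Proposition~2.5 of \cite{GGKZ} (a compact space with property $(\kappa)$ is scattered). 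Your measure-theoretic route has the advantage of staying internal to results already proved in the paper.
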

\begin{proof}
Since $C_p(X)$ is a $k_\IR$-space and $X$ is closely embedded into $C_p\big(C_p(X)\big)$, the space $X$ is a $\kappa$-space. As $C_p(X)$ is also an Ascoli space, Theorem \ref{t:Cp-Ascoli} implies that $X$ has the property $(\kappa)$.  Let $K$ be a  compact subset of $X$. Since the property $(\kappa)$ is hereditary \cite{Sak2}, $K$ also has the property $(\kappa)$. Therefore, $K$ is scattered by Proposition 2.5 of \cite{GGKZ}.

To show that $X$ is a universal $\kappa$-measure zero space, let $\mu\in P_\kappa(X)$. By the definition of the space $P_\kappa(X)$, $\mu$ is continuous on each compact set $K\subseteq C_p\big(X,[-2,2]\big)$. Therefore, $\mu$ is continuous on each compact set $K\subseteq C_p\big(X,(-2,2)\big)$. Taking into account that $C_p(X)$ and $C_p\big(X,(-2,2)\big)$ are homeomorphic, it follows that $\mu$ is $k$-continuous on the $k_\IR$-space $C_p\big(X,(-2,2)\big)$. Therefore $\mu$ is continuous on $C_p\big(X,(-2,2)\big)$. Hence, $\mu$ is continuous on $C_p\big(X,[-1,1]\big)$. By Proposition \ref{p:mu-P-sigma-discrete}, we have $\mu\in P_d(X)$, as desired.
\end{proof}


Let $X$ and $Y$ be spaces, and let $T:X\to Y$ be a continuous mapping. Then the adjoint mapping $T^{\#}: C_p(Y)\to C_p(X)$ is continuous by Proposition 0.4.6(1) of \cite{Arhangel}. If $T$ is surjective, then, by Proposition 0.4.6(2) of \cite{Arhangel}, $T^{\#}$ is an embedding. Note also that, by Proposition 0.4.8(3) of \cite{Arhangel}, $T^{\#}$ is an embedding onto a dense subspace of $C_p(X)$ if, and only if, $T$ is a condensation.

\begin{definition}
A condensation $T:X\to Y$  is said to be a {\em $k^{\#}_\IR$-condensation} (resp., a {\em
$k^{\#}$-condensation, an $s^{\#}_\IR$-condensation or an $s^{\#}$-condensation}) if  $T^{\#}\big(C_p(Y)\big)$ is $k_\IR$-dense (resp., $k$-dense, $s_\IR$-dense or sequentially dense) in $C_p(X)$.
\end{definition}

Any $k^{\#}$-condensation is a $k^{\#}_\IR$-condensation, and each $s^{\#}$-condensation is an $s^{\#}_\IR$-condensation.

\begin{proposition} \label{p:kR-condensation-kR}
Let $T:X\to Y$ be a $k^{\#}_\IR$-condensation (in particular, a $k^{\#}$-condensation). If $C_p(Y)$ is a $k_\IR$-space, then so is $C_p(X)$.
\end{proposition}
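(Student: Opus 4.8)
The plan is to reduce the statement to Theorem \ref{t:home-kRe}, which is tailor-made for this situation. First I would record the standard $C_p$-duality facts quoted just above from \cite[0.4.8(3)]{Arhangel}: since $T$ is a condensation, the adjoint map $T^{\#}\colon C_p(Y)\to C_p(X)$ is a topological embedding whose image $H:=T^{\#}\big(C_p(Y)\big)$ is a dense subspace of $C_p(X)$. In particular $H$ is homeomorphic to $C_p(Y)$, so the hypothesis that $C_p(Y)$ is a $k_\IR$-space yields that $H$ is a $k_\IR$-space, being a $k_\IR$-space clearly a topological invariant.

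Next, by the very definition of a $k^{\#}_\IR$-condensation, $H$ is $k_\IR$-dense in $C_p(X)$. Since $C_p(X)$ is a topological vector space, it is in particular a topological group and hence a homogeneous space. Thus $H$ is a $k_\IR$-dense $k_\IR$-subspace of the homogeneous space $C_p(X)$, and Theorem \ref{t:home-kRe} applies directly to give that $C_p(X)$ is a $k_\IR$-space.

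I do not expect any genuine obstacle here: the substance is entirely packaged into Theorem \ref{t:home-kRe} (equivalently, into the cover characterization Theorem \ref{t:characterization-kRe} applied to the family $\{h(H): h$ a translation of $C_p(X)\}$, each member of which is $k_\IR$-dense and a $k_\IR$-space because translations are homeomorphisms preserving both properties). The only remaining points are routine verifications already available in the excerpt: that $T^{\#}$ embeds $C_p(Y)$ onto a dense subspace, that the property of being a $k_\IR$-space transfers along homeomorphisms, and that $C_p(X)$ is homogeneous.
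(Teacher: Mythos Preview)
Your proof is correct and matches the paper's own argument essentially line for line: both embed $C_p(Y)$ via $T^{\#}$ as a $k_\IR$-dense $k_\IR$-subspace of the homogeneous space $C_p(X)$ and invoke Theorem~\ref{t:home-kRe}. The paper's version is simply more terse, omitting the routine remarks about $T^{\#}$ being an embedding and about homogeneity of $C_p(X)$.
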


\begin{proof}
Since $T$ is a $k^{\#}_\IR$-condensation, the $k_\IR$-space $C_p(Y)$  is homeomorphic to a $k_\IR$-dense linear subspace of $C_p(X)$. Thus, by Theorem \ref{t:home-kRe},  $C_p(X)$ is a $k_\IR$-space.
\end{proof}

\begin{proposition} \label{p:sR-condensation-sR}
Let $T:X\to Y$ be an $s^{\#}_\IR$-condensation (in particular, an $s^{\#}$-condensation). If $C_p(Y)$ is an $s_\IR$-space, then so is $C_p(X)$.
\end{proposition}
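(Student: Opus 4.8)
The plan is to mimic the proof of Proposition \ref{p:kR-condensation-kR} verbatim, replacing ``$k_\IR$'' by ``$s_\IR$'' throughout and invoking Theorem \ref{t:home-sRe} in place of Theorem \ref{t:home-kRe}.

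First I would recall that, since $T\colon X\to Y$ is a condensation, the adjoint map $T^{\#}\colon C_p(Y)\to C_p(X)$ is a linear topological embedding whose image $T^{\#}\big(C_p(Y)\big)$ is a dense linear subspace of $C_p(X)$; this is Proposition 0.4.8(3) of \cite{Arhangel}, already cited in the paragraph preceding the definition of $s^{\#}_\IR$-condensation. In particular $T^{\#}\big(C_p(Y)\big)$ is homeomorphic to $C_p(Y)$, so the hypothesis that $C_p(Y)$ is an $s_\IR$-space gives that $T^{\#}\big(C_p(Y)\big)$ is an $s_\IR$-subspace of $C_p(X)$.

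Next, by the very definition of an $s^{\#}_\IR$-condensation, the subspace $T^{\#}\big(C_p(Y)\big)$ is $s_\IR$-dense in $C_p(X)$. Since $C_p(X)$ is a topological vector space it is homogeneous, so Theorem \ref{t:home-sRe} applies with $H=T^{\#}\big(C_p(Y)\big)$ and $G=C_p(X)$, and yields that $C_p(X)$ is an $s_\IR$-space, as desired.

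There is essentially no obstacle here: the statement is the exact $s_\IR$-analogue of Proposition \ref{p:kR-condensation-kR}, and both the embedding properties of $T^{\#}$ and the homogeneity-plus-density transfer theorem (Theorem \ref{t:home-sRe}) are already available. The only point requiring a moment's care is the parenthetical ``in particular, an $s^{\#}$-condensation'' in the statement, which is justified by the remark recorded immediately after the definition that every $s^{\#}$-condensation is an $s^{\#}_\IR$-condensation.
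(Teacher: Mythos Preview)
Your proof is correct and follows the same approach as the paper: identify $T^{\#}\big(C_p(Y)\big)$ with $C_p(Y)$ as an $s_\IR$-dense $s_\IR$-subspace of $C_p(X)$ and apply Theorem \ref{t:home-sRe}. You simply spell out explicitly the homogeneity of $C_p(X)$ and the embedding property of $T^{\#}$, both of which the paper leaves implicit.
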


\begin{proof}
Since $T$ is an $s^{\#}_\IR$-condensation, the $s_\IR$-space $C_p(Y)$  is homeomorphic to an $s_\IR$-dense linear subspace of $C_p(X)$. Thus, by Theorem \ref{t:home-sRe},  $C_p(X)$ is an $s_\IR$-space.
\end{proof}

To obtain a concrete example of $C_p(X)$ which is a $k_\IR$-space we need the next lemma.



\begin{lemma} \label{l:one-point-k-conden}
Let $\alpha D=D\cup\{d_\infty\}$ be a one-point compactification of an infinite discrete set $D$, and let $X$ be the set $D\cup\{d_\infty\}$ endowed with a topology $\tau$ such that the identity map $T:X\to \alpha D$ is a condensation. Then  $T$ is a $k^{\#}$-condensation.
If $|X|$ is not sequential, then $T$ is an $s^{\#}_\IR$-condensation.
\end{lemma}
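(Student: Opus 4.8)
The plan is to verify the relevant density properties of $T^{\#}\big(C_p(\alpha D)\big)$ inside $C_p(X)$ directly, exploiting the very simple structure of both $\alpha D$ and $X$: in each case there is exactly one non-isolated point, $d_\infty$, and a continuous function on either space is just a bounded function on $D$ whose behaviour near $d_\infty$ is controlled. Concretely, $C(\alpha D)$ consists of the functions $g:D\cup\{d_\infty\}\to\IR$ with $g(d)\to g(d_\infty)$ along the cofinite filter on $D$, while $C(X,\tau)$ consists of the functions continuous for the (finer) topology $\tau$; since $T$ is a condensation, $C(\alpha D)\subseteq C(X)$ and $T^{\#}$ is, up to this identification, just the inclusion. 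So I must show that this inclusion has $k$-dense (resp.\ $s_\IR$-dense) image in $C_p(X)$.

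First I would prove the $k^{\#}$-condensation statement. Fix $f\in C_p(X)$; I need a subset $M\subseteq T^{\#}\big(C_p(\alpha D)\big)$ with $f\in\overline{M}$ and $\overline{M}$ compact in $C_p(X)$. The idea is to approximate $f$ by functions that agree with $f$ on a finite set and are ``frozen'' to the value $f(d_\infty)$ off a larger finite set — such functions are continuous on $\alpha D$ (they are eventually constant equal to $f(d_\infty)$), hence lie in $C(\alpha D)$. Precisely, for a finite $F\subseteq D$ and the corresponding basic neighborhood $[F]$ of $f$ in $C_p(X)$, define $g_F$ to equal $f$ on $F$ and to equal $f(d_\infty)$ everywhere else; then $g_F\in C(\alpha D)$, $g_F\in[F]$, and the net $(g_F)_F$ (indexed by finite subsets of $D$, directed by inclusion) converges to $f$ pointwise. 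It remains to check that $M:=\{g_F:F\subseteq D\text{ finite}\}\cup\{f\}$, or a suitable superset, has compact closure in $C_p(X)$: all these functions take values in the compact interval $[-c,c]$ where $c=\sup_D|f|$ (finite since $f$ is $\tau$-continuous on the compact space $X$), so $\overline{M}$ is a closed subset of $[-c,c]^{X}$; and one checks that every function in $\overline{M}$ is itself $\tau$-continuous — indeed a pointwise limit of the $g_F$'s and $f$ is continuous at every isolated point automatically, and at $d_\infty$ it is squeezed by the same estimate that makes $f$ continuous there. Hence $\overline{M}$ is compact in $C_p(X)$ and $f\in\overline{M}$, so $f$ witnesses $k$-density and $T$ is a $k^{\#}$-condensation.

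For the second assertion, assume $|X|=|D|$ is not sequential and show $T^{\#}\big(C_p(\alpha D)\big)$ is $s_\IR$-dense in $C_p(X)$, i.e.\ for each $f\in C_p(X)$ there is an $s_\IR$-subspace $N$ of $C_p(X)$ with $N\subseteq T^{\#}\big(C_p(\alpha D)\big)$ (or meeting it densely in the appropriate sense) containing $f$ in the closure of $N\cap T^{\#}(C_p(\alpha D))$. Here the natural candidate is to realize a copy of $C_p(\alpha D,[-c,c])\cong C_p(\alpha D)\cong \Sigma$-type subspace of $\mathbf{2}^{|D|}$ — more usefully, note $C_p(\alpha D)$ is homeomorphic to the subspace $\{g\in\IR^{D\cup\{\infty\}}: g(d)\to g(\infty)\}$, which contains $\mathbf{2}^{(D)}$-like $\sigma$-products, and since $|D|$ is not sequential, a $\sigma$-product of copies of $\mathbf{2}$ over $D$ is Fréchet–Urysohn (Proposition \ref{p:noble:fc-prod}), hence an $s_\IR$-space. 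I would take $N$ to be $T^{\#}\big(C_p(\alpha D)\big)$ itself and argue it is an $s_\IR$-space using Theorem \ref{t:characterization-sR} or the Noble-type results, together with the observation that $f$ lies in the closure (in $C_p(X)$) of the functions of finite support-modification constructed above, all of which lie in $T^{\#}(C_p(\alpha D))$. The key input is exactly that $|D|$ is not a sequential cardinal, so that products/$\sigma$-products over $D$ retain $s_\IR$-ness; without that hypothesis the conclusion genuinely fails, mirroring Proposition \ref{p:product-sR--seq-card}.

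The main obstacle I anticipate is the $s_\IR$-density step: showing that the witnessing subspace $N\subseteq C_p(X)$ through a given $f$ is genuinely an $s_\IR$-space requires identifying $T^{\#}\big(C_p(\alpha D)\big)$ (or the relevant slice of it) with something to which the non-sequential-cardinal machinery of Section \ref{sec:noble} applies — essentially recognizing it as (closely embedded in) a product or $\sigma$-product of first countable spaces indexed by $D$ and invoking Proposition \ref{p:noble:fc-prod}(ii). The $k^{\#}$-condensation part, by contrast, is a routine compactness-in-$C_p$ argument and should go through cleanly with the uniform bound $\sup_D|f|<\infty$ doing all the work.
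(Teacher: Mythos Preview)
Your outline for the $k^{\#}$-condensation part is close to the paper's, but contains a real error: you assert $c=\sup_D|f|<\infty$ because ``$f$ is $\tau$-continuous on the compact space $X$'', yet $X$ is \emph{not} compact in general. The topology $\tau$ is by hypothesis finer than that of $\alpha D$ (so that $T$ is a condensation), and it may well be discrete, in which case every real-valued function on $X$ is continuous and $f$ can be unbounded. Your construction is salvageable, however, once you notice that each $g_F$ satisfies $g_F(x)\in\{f(d_\infty),f(x)\}$ at every coordinate, so the family lives in the compact product $\prod_{x\in X}\{f(d_\infty),f(x)\}$ regardless of whether $f$ is bounded; continuity of any limit function at $d_\infty$ then follows from $|h(x)-f(d_\infty)|\le|f(x)-f(d_\infty)|$. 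The paper does exactly this, only with the closed interval $I\big(f(d_\infty),f(x)\big)$ in place of the two-point set: it sets $K=\prod_{x\in X}I\big(f(d_\infty),f(x)\big)$, observes $K\subseteq C_p(X)$, and notes that the finitely-supported approximations $g_F$ form a dense subset of $K$ lying inside $C_p(\alpha D)$.

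Your approach to the $s_\IR^{\#}$-condensation has a more serious gap. By definition, for $L=T^{\#}\big(C_p(\alpha D)\big)$ to be $s_\IR$-dense in $C_p(X)$ you need, for each $f\in C_p(X)$, an $s_\IR$-subspace $N\subseteq C_p(X)$ with $f\in N\cap\overline{N\cap L}$. In particular $f$ must belong to $N$. Your proposed choice $N=L$ fails this immediately, since an arbitrary $f\in C_p(X)$ need not lie in $C_p(\alpha D)$. The paper's trick is to reuse the \emph{same} compact set $K$ from the first part as the witnessing subspace: $K$ is homeomorphic to a Tychonoff cube $[0,1]^\tau$ with $\tau\le|X|$, hence $\tau$ is not sequential, so by Proposition~\ref{p:noble:fc-prod}(ii) the cube $K$ is itself an $s_\IR$-space. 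Since $f\in K$ and $K\cap L$ is dense in $K$ (from the first part), this gives $s_\IR$-density. Thus the non-sequential-cardinal hypothesis is used not on $C_p(\alpha D)$ globally but on the coordinate-wise product $K$ indexed by $X$.
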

\begin{proof}
Let $f\in C_p(X)$. For every $a,b\in\IR$, let $I(a,b):=[a,b]$ if $a\leq b$, and $I(a,b):=[b,a]$ if $a>b$. Set
\[
K:=\big\{ h\in \IR^X: h(x)\in I\big(f(d_\infty),f(x)\big)  \; \mbox{ for all }\; x\in X\big\}.
\]
Then $K$ is a compact subset of $\IR^X$. It is easy to see that $K\subseteq C_p(X)$.
Note that $f(d_\infty)=g(d_\infty)$ for every $g\in K$.

We show that $f\in \overline{K\cap C_p(\alpha D)}$. Since $f\in K$, it suffices to verify that $K\cap C_p(\alpha D)$ is dense in $K$. Put
\[
M = \{ g\in K : |\{x\in D: g(x)\neq f(d_\infty)\}| < \omega \}.
\]
Then $M\subseteq K\cap C_p(\alpha D)$ and $M$ is dense in $K$. In particular, $f\in K\cap \overline{K\cap C_p(\alpha D)}$. Therefore $C_p(\alpha D)=T^{\#}(C_p(\alpha D))$ is $k$-dense in $C_p(X)$. Thus $T$ is a $k^{\#}$-condensation.

Assume that $|X|$ is not a sequential cardinal. The compact set $K$ is homeomorphic to the Tychonoff cube $[0,1]^\tau$, where $\tau\leq |X|$. Therefore, $\tau$ is not sequential, too.
It follows from Proposition \ref{p:noble:fc-prod} that $K$ is an $s_\IR$-space. Therefore, $C_p(\alpha D)=T^{\#}(C_p(\alpha D))$ is $s_\IR$-dense in $C_p(X)$.
\end{proof}

\begin{theorem} \label{t:exa-kR-Cp}
Let $X$ be a space with one non-isolated point. Then $C_p(X)$ is a $k_\IR$-space. If $|X|$ is not sequential, then $C_p(X)$ is an $s_\IR$-space.
\end{theorem}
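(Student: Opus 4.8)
The plan is to condense $X$ onto the one-point compactification $\alpha D$ of its discrete part, to prove that $C_p(\alpha D)$ is both a $k_\IR$-space and an $s_\IR$-space, and then to transfer these properties to $C_p(X)$ via Lemma \ref{l:one-point-k-conden} together with Propositions \ref{p:kR-condensation-kR} and \ref{p:sR-condensation-sR}. Write $x_\ast$ for the unique non-isolated point of $X$ and set $D:=X\setminus\{x_\ast\}$, which is an infinite (otherwise $X$ would be discrete) open discrete subspace of $X$. First I would check that the identity map $T\colon X\to\alpha D=D\cup\{d_\infty\}$ with $T(x_\ast)=d_\infty$ is a condensation: as $X$ is Tychonoff, each singleton $\{d\}$, $d\in D$, is closed in $X$, so $X\setminus K$ is an open neighbourhood of $x_\ast$ for every finite $K\subseteq D$; together with the fact that points of $D$ are open in $X$, this makes $T$ continuous, and being a bijection $T$ is a condensation. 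Hence Lemma \ref{l:one-point-k-conden} applies: $T$ is a $k^{\#}$-condensation, and if $|X|$ is not sequential then $T$ is moreover an $s^{\#}_\IR$-condensation.

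The heart of the argument is to show that $C_p(\alpha D)$ is a $k_\IR$-space and an $s_\IR$-space. For a countable $A\subseteq D$ set
\[
C_A:=\{f\in C(\alpha D):f(d)=f(d_\infty)\ \text{for all}\ d\in D\setminus A\}.
\]
Continuity at $d_\infty$ forces $\{d\in D:f(d)\ne f(d_\infty)\}$ to be countable for every $f\in C(\alpha D)$, so $\mathcal M:=\{C_A:A\in[D]^{\le\omega}\}$ is a cover of $C_p(\alpha D)$. Each $C_A$ is closed in $C_p(\alpha D)$, and on $C_A$ the pointwise topology is generated by the countably many evaluations at the points of $\{d_\infty\}\cup A$; hence $C_A$ embeds into $\IR^{\omega}$ and is metrizable, so in particular Fr\'echet--Urysohn and therefore simultaneously a $k_\IR$-space and an $s_\IR$-space. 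Moreover $(\alpha D)^n$ is compact, hence Lindel\"of, for every $n\in\w$, so $C_p(\alpha D)$ has countable tightness by the Arhangel'skii--Pytkeev theorem \cite[II.1.1]{Arhangel}. Using this I would show that $C_p(\alpha D)$ is generated by $\mathcal M$: if $F\subseteq C_p(\alpha D)$ meets every member of $\mathcal M$ in a closed set and $g\in\overline{F}$, then countable tightness yields a countable $F_0\subseteq F$ with $g\in\overline{F_0}$; taking $A$ to be the countable union of the sets $\{d\in D:h(d)\ne h(d_\infty)\}$ over $h\in F_0$ we get $F_0\subseteq C_A$, and since $C_A$ is closed in $C_p(\alpha D)$ it follows that $g\in C_A$, that $g$ lies in the closure of $F_0$ computed inside $C_A$, and hence that $g\in F\cap C_A\subseteq F$ because $F\cap C_A$ is closed in $C_A$; thus $F$ is closed. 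By Proposition \ref{p:generated-sfg}, $C_p(\alpha D)$ is then strongly functionally generated by a family of metrizable --- hence $k_\IR$ and $s_\IR$ --- subspaces, so Theorems \ref{t:characterization-kR} and \ref{t:characterization-sR} (the equivalence (i)$\Leftrightarrow$(iv) in each) show that $C_p(\alpha D)$ is a $k_\IR$-space and an $s_\IR$-space.

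Once this is in place the conclusion is immediate: since $T$ is a $k^{\#}$-condensation and $C_p(\alpha D)$ is a $k_\IR$-space, Proposition \ref{p:kR-condensation-kR} gives that $C_p(X)$ is a $k_\IR$-space; if in addition $|X|$ is not sequential, then $T$ is an $s^{\#}_\IR$-condensation and Proposition \ref{p:sR-condensation-sR} gives that $C_p(X)$ is an $s_\IR$-space. The step I expect to demand the most care is the analysis of $C_p(\alpha D)$: first identifying the topology of each $C_A$ precisely enough to see that it is metrizable, and then combining the countable tightness of $C_p(\alpha D)$ with the closedness of the subspaces $C_A$ to conclude that the cover $\mathcal M$ generates $C_p(\alpha D)$; everything else is routine bookkeeping with the cited results.
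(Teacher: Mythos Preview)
Your argument is correct. The overall architecture matches the paper's proof exactly: condense $X$ onto $\alpha D$, invoke Lemma~\ref{l:one-point-k-conden}, establish that $C_p(\alpha D)$ is a $k_\IR$- and $s_\IR$-space, and then transfer these properties along the condensation (your Propositions~\ref{p:kR-condensation-kR}--\ref{p:sR-condensation-sR} unwind to the paper's Theorems~\ref{t:home-kRe}--\ref{t:home-sRe}).

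The one genuine difference lies in how you handle $C_p(\alpha D)$. The paper dispatches this in a single line: $\alpha D$ is a scattered compact space, so by Theorem~III.1.2 of \cite{Arhangel} the space $C_p(\alpha D)$ is Fr\'echet--Urysohn, hence simultaneously a $k_\IR$- and an $s_\IR$-space. You instead give a direct internal argument: cover $C_p(\alpha D)$ by the closed metrizable subspaces $C_A$, use the Arhangel'skii--Pytkeev theorem to obtain countable tightness, and then verify that the cover generates $C_p(\alpha D)$, whence Theorems~\ref{t:characterization-kR} and~\ref{t:characterization-sR} apply. This is longer but more self-contained (it avoids the external Fr\'echet--Urysohn citation), and in fact your argument essentially reproves that $C_p(\alpha D)$ is sequential. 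Conversely, the paper's route is considerably shorter and yields the stronger conclusion that $C_p(\alpha D)$ is Fr\'echet--Urysohn, which you do not recover explicitly. Either way the theorem follows.
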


\begin{proof}
Let $d_\infty$ be the non-isolated point of $X$, and let $D=X\SM \{d_\infty\}$. Let $Y=D\cup \{d_\infty\}$ be the one point compactification of the discrete space $D$. Then the identity map $T:X\to Y$ is a condensation. By Lemma \ref{l:one-point-k-conden}, $T$ is a $k^{\#}$-condensation and if $|X|$ is not a sequential cardinal, then $T$ is an $s^{\#}_\IR$-condensation. Since $Y$ is a scattered compact space, then, by Theorem III.1.2 of  \cite{Arhangel},  $C_p(Y)$ is a Fr\'{e}chet--Urysohn space. From Theorem \ref{t:home-kRe} it follows that $C_p(X)$ is a $k_\IR$-space. If $|X|$ is not sequential, then, by Theorem \ref{t:home-sRe},  the space $C_p(X)$ is an $s_\IR$-space.
\end{proof}

\begin{theorem} \label{t:lc-kR-Cp}
Let $X=\bigoplus_{\alpha\in \AAA}X_\alpha$, where all $X_\alpha$ are countable spaces. Then $C_p(X)$ is a $k_\IR$-space. If $|X|$ is not sequential, then $C_p(X)$ is an $s_\IR$-space.
\end{theorem}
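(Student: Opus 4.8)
The plan is to reduce the statement to Noble's theorem on products of first countable spaces (Proposition~\ref{p:noble:fc-prod}). The first step is to use the canonical homeomorphism
\[
C_p(X)=C_p\Big(\bigoplus_{\alpha\in\AAA}X_\alpha\Big)\;\cong\;\prod_{\alpha\in\AAA}C_p(X_\alpha),
\]
the same identification that already appears in Proposition~\ref{p:R-quotient-sfg}(iii): a continuous real-valued function on a topological sum is precisely a family of continuous functions on the summands, and the pointwise topology decomposes as the product of the pointwise topologies. The second step is to observe that each factor is metrizable: since $X_\alpha$ is countable, $C_p(X_\alpha)$ is a subspace of $\IR^{X_\alpha}$, and $\IR^{X_\alpha}$ is metrizable (it is $\IR^n$ or $\IR^\omega$). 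Hence $C_p(X)$ is homeomorphic to a product of first countable spaces, and Proposition~\ref{p:noble:fc-prod}(i) immediately gives that $C_p(X)$ is a $k_\IR$-space.

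For the second assertion I would apply Proposition~\ref{p:noble:fc-prod}(ii) to the same product; this needs the \emph{index} cardinal to be non-sequential, so a short bookkeeping step converting the hypothesis on $|X|$ is required. Discarding the empty summands (each such contributes only the one-point space $C_p(\emptyset)$ to the product and so does not affect the homeomorphism type of $C_p(X)$), I may assume all $X_\alpha\neq\emptyset$, whence $|\AAA|\le|X|$ by choosing a point in each summand. It then suffices to recall that the class of non-sequential cardinals is downward closed: if $g\colon\mathbf{2}^{|\AAA|}\to\IR$ were sequentially continuous and discontinuous, then composing it with the (coordinate-restriction) projection $\mathbf{2}^{|X|}\to\mathbf{2}^{|\AAA|}$, which is a continuous open, hence quotient, surjection, would yield a sequentially continuous discontinuous function on $\mathbf{2}^{|X|}$, contrary to the assumption that $|X|$ is not sequential. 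Therefore $|\AAA|$ is not sequential, and Proposition~\ref{p:noble:fc-prod}(ii) yields that $C_p(X)$ is an $s_\IR$-space.

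I do not expect a genuine obstacle here: once the decomposition $C_p(X)\cong\prod_{\alpha}C_p(X_\alpha)$ and the metrizability of the factors are noted, the theorem is a direct corollary of Proposition~\ref{p:noble:fc-prod}. The only point that needs a careful line is the reduction from ``$|X|$ is not sequential'' to ``$|\AAA|$ is not sequential'' in the $s_\IR$-part, carried out by the projection argument above. (In particular, taking all $X_\alpha$ to be singletons recovers the assertions of Proposition~\ref{p:noble:fc-prod} that $\IR^\lambda$ is a $k_\IR$-space and $\IR^{\aleph_n}$ is an $s_\IR$-space.)
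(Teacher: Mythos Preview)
Your proposal is correct and follows essentially the same approach as the paper: identify $C_p(X)\cong\prod_{\alpha}C_p(X_\alpha)$, note that each factor is metrizable since $X_\alpha$ is countable, and apply Proposition~\ref{p:noble:fc-prod}. In fact you are more careful than the paper, which applies Proposition~\ref{p:noble:fc-prod}(ii) directly without spelling out the passage from ``$|X|$ is not sequential'' to ``$|\AAA|$ is not sequential''; your projection argument fills in this detail.
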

\begin{proof}
Since $C_p(X_\alpha)$ is metrizable for $\alpha\in \AAA$, Proposition \ref{p:noble:fc-prod}(i) implies that $C_p(X)$ is a $k_\IR$-space.
If $|X|$ is not a sequential cardinal, then Proposition \ref{p:noble:fc-prod}(ii) implies that $C_p(X)$ is an $s_\IR$-space.
\end{proof}

\begin{theorem} \label{t:lP-kR-Cp}
Let $X=\bigoplus_{\alpha\in \AAA}X_\alpha$, where all $X_\alpha$ are Lindel\"{o}f $P$-spaces. Then $C_p(X)$ is a $k_\IR$-space. If $|\AAA|$ is not sequential, then $C_p(X)$ is an $s_\IR$-space.
\end{theorem}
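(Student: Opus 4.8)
The plan is to pass to the canonical homeomorphism $C_p(X)\cong\prod_{\alpha\in\AAA}C_p(X_\alpha)$ and to invoke Noble's Proposition~\ref{p:noble:prod}; note that for the $s_\IR$-conclusion the hypothesis ``$|\AAA|$ is not sequential'' is exactly what part~(ii) of that proposition needs. So it suffices to show that every $\sigma$-product in $\prod_{\alpha}C_p(X_\alpha)$ is both a $k_\IR$-space and an $s_\IR$-space. Everything will rest on an auxiliary statement about $C_p$ of a single Lindel\"{o}f $P$-space, which I would prove first.

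The key lemma is: \emph{if $Y$ is a Lindel\"{o}f $P$-space, then $C_p(Y)$ is both a $k_\IR$-space and an $s_\IR$-space.} For its proof, first note that for $f\in C(Y)$ and $r\in\IR$ the level set $f^{-1}(r)$ is a zero-set, hence $G_\delta$, hence clopen because $Y$ is a $P$-space; these sets partition the Lindel\"{o}f space $Y$, so the partition is countable, and therefore $f$ is constant on the blocks of a countable clopen partition $\mathcal{P}_f$ of $Y$. Consequently $C_p(Y)=\bigcup_{\mathcal{P}}M_{\mathcal{P}}$, where $\mathcal{P}$ runs over the countable clopen partitions of $Y$ and $M_{\mathcal{P}}=\{g\in C(Y):g\text{ is constant on each block of }\mathcal{P}\}$; each $M_{\mathcal{P}}$ is closed in $C_p(Y)$ and embeds into $\IR^{|\mathcal{P}|}$ with $|\mathcal{P}|\le\omega$, hence is metrizable. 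Also, a slice argument (thicken a countable cover of $\{y\}\times Y$ to an open tube $U\times Y$ using that countable intersections of open sets are open in a $P$-space) shows $Y^n$ is Lindel\"{o}f for every $n$, so $C_p(Y)$ has countable tightness by the Arhangel'skii--Pytkeev theorem. Now let $h\colon C_p(Y)\to\IR$ be discontinuous at a point $f$, fix $\e>0$ with $f\in\overline{Q}\SM Q$ for $Q=\{g:|h(g)-h(f)|\ge\e\}$, and use countable tightness to pick a countable $Q_0=\{g_n:n\in\w\}\subseteq Q$ with $f\in\overline{Q_0}$. Let $\mathcal{P}$ be the common refinement of $\mathcal{P}_f$ and the $\mathcal{P}_{g_n}$: its blocks are countable intersections of clopen sets, hence clopen, and $\mathcal{P}$ is countable since $Y$ is Lindel\"{o}f, so $f$ and all $g_n$ lie in the closed subspace $M_{\mathcal{P}}$; thus $f\in\overline{Q_0}$ already inside $M_{\mathcal{P}}$, and $h{\restriction}_{M_{\mathcal{P}}}$ is discontinuous. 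Hence $C_p(Y)$ is strongly functionally generated by the family of metrizable subspaces $\{M_{\mathcal{P}}\}$, and the lemma follows from Theorem~\ref{t:characterization-kR}(iv) and Theorem~\ref{t:characterization-sR}(iv).

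To finish, consider a $\sigma$-product $\Sigma$ in $\prod_{\alpha}C_p(X_\alpha)$; it consists of those $f\in C_p(X)$ that vanish off finitely many summands. Since a topological sum of $P$-spaces is a $P$-space, $X=\bigoplus_\alpha X_\alpha$ is a $P$-space, and since $\Sigma$ is a $\sigma$-product of spaces of countable tightness it has countable tightness. Running the argument of the lemma inside $\Sigma$: given $h$ discontinuous at $f\in\Sigma$ and a countable $Q_0=\{g_n\}\subseteq Q$ with $f\in\overline{Q_0}$, only countably many summands of $X$ are used by $f$ and the $g_n$, and their union is a countable topological sum of Lindel\"{o}f $P$-spaces, hence a Lindel\"{o}f $P$-space; therefore the common refinement of the level-set partitions of $f$ and the $g_n$, together with the single remaining clopen block of $X$, is again a countable clopen partition $\mathcal{P}$ of $X$. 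The subspace $N_{\mathcal{P}}=\Sigma\cap M_{\mathcal{P}}$ then embeds into $\IR^{\le\omega}$, hence is metrizable, and $h{\restriction}_{N_{\mathcal{P}}}$ is discontinuous just as before. So every $\sigma$-product in $\prod_\alpha C_p(X_\alpha)$ is a $k_\IR$-space and an $s_\IR$-space, and Proposition~\ref{p:noble:prod} gives that $C_p(X)$ is a $k_\IR$-space, and an $s_\IR$-space when $|\AAA|$ is not sequential.

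The step I expect to be the main obstacle is the strong-functional-generation claim — showing that the cover of $C_p(Y)$ (resp.\ of $\Sigma$) by the metrizable subspaces $M_{\mathcal{P}}$ (resp.\ $N_{\mathcal{P}}$) actually generates the space rather than merely covering it. This is where one must combine the $P$-property (so that blocks stay clopen under countable intersections), the Lindel\"{o}f property (so that refined partitions stay countable), and countable tightness of the function space; the countable-tightness input for the $\sigma$-products is the most delicate point, and if the permanence of countable tightness under $\sigma$-products is not available off the shelf it would have to be established en route.
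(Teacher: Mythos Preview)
Your proof is correct, and the countable-tightness claim you flag as delicate can indeed be established: since a countable topological sum of Lindel\"of $P$-spaces is again a Lindel\"of $P$-space, every \emph{countable} sub-product $\prod_{\alpha\in S}C_p(X_\alpha)=C_p\big(\bigoplus_{\alpha\in S}X_\alpha\big)$ has countable tightness, and a standard closing-off argument (grow $S$ and a countable witness set $B\subseteq A$ alternately) then gives countable tightness of the $\sigma$-product. So the approach goes through.

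The paper's proof takes a shorter route. Both arguments reduce to Proposition~\ref{p:noble:prod}, so both must show that every $\sigma$-product in $\prod_\alpha C_p(X_\alpha)$ is a $k_\IR$- and $s_\IR$-space. Instead of your hands-on strong-functional-generation analysis, the paper forms the one-point Lindel\"ofication $Y=X\cup\{x_\infty\}$ (neighbourhoods of $x_\infty$ miss only countably many summands), observes that $Y$ is again a Lindel\"of $P$-space, and invokes the known fact that $C_p(Y)$ is Fr\'echet--Urysohn. Since the $\sigma$-product at $0$ embeds in $C_p(Y)$ (extend by $0$ at $x_\infty$; the restriction map $C_p(Y)\to C_p(X)$ is an embedding because $X$ is dense in $Y$) and all $\sigma$-products are mutually homeomorphic via translation, every $\sigma$-product is Fr\'echet--Urysohn --- much more than you need. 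What your approach buys is self-containment: you avoid citing the Fr\'echet--Urysohn theorem for $C_p$ of Lindel\"of $P$-spaces and in effect reprove a weak form of it via your clopen-partition lemma. What the paper's approach buys is brevity, and it sidesteps the countable-tightness issue entirely by working inside a single Fr\'echet--Urysohn ambient space.
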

\begin{proof}
Let $Y=X\cup\{x_{\infty}\}$ be a one point Lindel\"{o}fication of the space $X$: sets of the form $Y\setminus \bigcup_\alpha\{X_\alpha\in \cB\}$, where $\cB\subseteq\AAA$ and $|\cB|\leq\om$, form a base at the point $x_{\infty}$. The space $Y$ is a Lindel\"{o}f $P$-space. Therefore, by Problem 135 of  \cite{Tkachuk-2}, the space $C_p(Y)$ is Fr\'{e}chet--Urysohn. Any $\si$-product in the product $\prod_{\alpha\in \AAA}C_p(X_\alpha)=C_p(X)$ is embedded in $C_p(Y)$, so it is a Fr\'{e}chet--Urysohn space. It remains to apply Proposition \ref{p:noble:prod}.
\end{proof}

\begin{problem}
Let a space $X$ be such that $C_p(X)$ is a $k_\IR$-space and $|X|$ is not sequential. Is then $C_p(X)$ an $s_\IR$-space?
\end{problem}

\begin{problem}
Is there an example in ZFC of a   space $X$ such that $C_p(X)$ is a $k_\IR$-space and $C_p(X)$ is not an $s_\IR$-space?
\end{problem}

\begin{problem} \label{pr:Rc-sr}
Is  $\IR^{\mathfrak{c}}$ an $s_\IR$-space?
\end{problem}

Note that Problem \ref{pr:Rc-sr} is equivalent to the Keisler--Tarski problem \cite{KeislerTarski1963}, which was discussed in Section \ref{sec:s-R-spaces}.

As the statement below shows, for many cases of $C_p$-spaces, the property of being an $s_\IR$-space coincides with the property of being a $k_\IR$-space.

\begin{proposition} \label{p:cp-sR=kR}
Let a space $X$ satisfy the conditions of Proposition \ref{p:sk:cp}. Then $C_p(X)$ is a $k_\IR$-space if and only if $C_p(X)$ is an $s_\IR$-space.
\end{proposition}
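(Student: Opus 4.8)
The plan is to notice that the whole content of the statement is already packaged in the two preceding propositions, so the proof is a one-line combination. The hypothesis ``$X$ satisfies the conditions of Proposition \ref{p:sk:cp}'' means precisely that $X$ has a dense subspace $Y$ enjoying one of the four properties (i)--(iv) listed there; by that proposition this forces $C_p(X)$ to be a (hereditary) $sk$-space, i.e.\ every $s$-continuous real-valued function on $C_p(X)$ is $k$-continuous.

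First I would invoke Proposition \ref{p:sk:cp} verbatim to record that $C_p(X)$ is an $sk$-space, irrespective of which of the four cases holds (the proposition already collapses all four into this single conclusion, so no case analysis is needed on our side). Then I would simply quote Proposition \ref{p:sk-space}(ii): an $sk$-space is an $s_\IR$-space if, and only if, it is a $k_\IR$-space. Applying this to the $sk$-space $C_p(X)$ gives exactly the desired equivalence. If one wants the argument spelled out rather than cited, the point is that every $k$-continuous function is automatically $s$-continuous (a convergent sequence together with its limit is compact), so on an $sk$-space $k$-continuity and $s$-continuity of real-valued functions coincide; hence ``every $k$-continuous function on $C_p(X)$ is continuous'' and ``every $s$-continuous function on $C_p(X)$ is continuous'' are the same assertion.

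There is essentially no obstacle: the difficulty, such as it is, lives entirely inside Propositions \ref{p:sk:cp} and \ref{p:sk-space}, which we are allowed to assume. The only mild point to keep in mind is that the equivalence is asserted for the fixed space $C_p(X)$, not hereditarily, so it is enough to know $C_p(X)$ itself is an $sk$-space (which is weaker than the ``hereditary $sk$-space'' conclusion actually supplied by Proposition \ref{p:sk:cp}). Thus the proof will be just two sentences.
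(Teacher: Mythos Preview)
Your proposal is correct and matches the paper's proof exactly: the paper also just invokes Proposition~\ref{p:sk:cp} to conclude that $C_p(X)$ is an $sk$-space and then applies Proposition~\ref{p:sk-space}(ii). Your additional commentary about why the equivalence holds and why hereditariness is not needed is accurate but not required.
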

\begin{proof}
It follows from Proposition \ref{p:sk:cp} that $C_p(X)$ is $sk$-space. It remains to apply Proposition \ref{p:sk-space}(ii).
\end{proof}

\begin{problem}
Is it possible to prove Proposition \ref{p:cp-sR=kR} in ZFC?
\end{problem}

Till the end of this section we consider the case when $C_p(X)$ is/or not an $s_\IR$-space.

\begin{proposition}\label{p:sR-Cp-realcompact}
\begin{enumerate}
\item[{\rm(i)}] If $X$ is an $s_\IR$-space, then $C_p(X)$ is realcompact.
\item[{\rm(ii)}] If $C_p(X)$ is an $s_\IR$-space, then $X$ is realcompact.
\end{enumerate}
\end{proposition}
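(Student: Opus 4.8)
The plan is to prove (i) by realizing $C_p(X)$ as a closed subspace of a product of copies of a fixed separable metrizable space, and then to deduce (ii) from (i) via the canonical closed embedding of $X$ into $C_p(C_p(X))$.

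For (i), since $X$ is an $s_\IR$-space, the equivalence (i)$\Leftrightarrow$(v) in Theorem \ref{t:characterization-sR} supplies a discrete space $D$ and an $R$-quotient surjection $T\colon\mathbf{s}\times D\to X$. As $T$ is surjective, the adjoint map $T^{\#}\colon C_p(X)\to C_p(\mathbf{s}\times D)$ is a topological embedding, and since $T$ is $R$-quotient, Proposition \ref{p:R-quotient-characterization} shows that its image $T^{\#}(C_p(X))$ is closed in $C_p(\mathbf{s}\times D)$. Now $\mathbf{s}\times D=\bigoplus_{d\in D}(\mathbf{s}\times\{d\})$ is a topological sum of copies of $\mathbf{s}$, hence $C_p(\mathbf{s}\times D)=C_p(\mathbf{s})^{D}$. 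Because $\mathbf{s}$ is countable, $C_p(\mathbf{s})$ is a subspace of $\IR^{\mathbf{s}}\cong\IR^{\w}$, so it is second countable, hence Lindel\"of, hence realcompact by \cite[Theorem~3.11.12]{Eng}. A product of realcompact spaces is realcompact (see \cite[Theorem~3.11.3]{Eng}), so $C_p(\mathbf{s})^{D}$ is realcompact, and therefore its closed subspace $T^{\#}(C_p(X))\cong C_p(X)$ is realcompact as well.

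For (ii), observe that $C_p(X)$ is itself an $s_\IR$-space by hypothesis, so applying part (i) to $C_p(X)$ in place of $X$ yields that $C_p\big(C_p(X)\big)$ is realcompact. Since the canonical evaluation map $X\to C_p\big(C_p(X)\big)$ is a closed embedding (this fact is already used in the proof of Proposition \ref{p:Cp-kR-necessary}; cf.\ \cite{Arhangel}), $X$ is homeomorphic to a closed subspace of the realcompact space $C_p(C_p(X))$, and closed subspaces of realcompact spaces are realcompact, so $X$ is realcompact.

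I do not anticipate a genuine obstacle, but two points deserve care. First, the discrete set $D$ produced in Theorem \ref{t:characterization-sR}(v) may be of arbitrarily large --- in particular Ulam-measurable --- cardinality, so one really must invoke the theorem that arbitrary products of realcompact spaces are realcompact, not merely that countable powers of $\IR$ are; this is exactly what lets the argument run uniformly in $|D|$. Second, one should check that $T^{\#}$ is a homeomorphism onto its image and not merely an injection, which is where surjectivity of $T$ (which holds because $T$ is $R$-quotient) is used.
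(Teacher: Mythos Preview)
Your proof is correct, and for (ii) it coincides with the paper's argument. For (i), however, you take a genuinely different route. The paper simply invokes Proposition~\ref{p:sR-R-tightness} to conclude that $X$ has countable $\IR$-tightness, and then quotes Corollary~II.4.17 of \cite{Arhangel} (if $X$ has countable $\IR$-tightness then $C_p(X)$ is realcompact) to finish. Your argument instead uses the structural characterization from Theorem~\ref{t:characterization-sR}(v) to realize $C_p(X)$ as a closed subspace of a product $C_p(\mathbf{s})^D$, and then exploits the closure of realcompactness under products and closed subspaces. The paper's proof is a two-line appeal to a black-box result; yours is longer but essentially self-contained within the paper's own framework, and it makes transparent \emph{why} $C_p(X)$ is realcompact by exhibiting the closed embedding into a power of a separable metrizable space. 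One small remark: the reference \cite[Theorem~3.11.3]{Eng} is the characterization of realcompact spaces as closed subspaces of powers of $\IR$; the statement that products of realcompact spaces are realcompact is Theorem~3.11.7 there (though of course the former implies the latter directly).
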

\begin{proof}
(i) Since, by  Proposition \ref{p:sR-R-tightness}, the $\IR$-tightness of  $X$ is countable, Corollary~II.4.17 of \cite{Arhangel} implies that $C_p(X)$ is realcompact.

(ii) follows from (i) and the fact that $X$ is closely embedded into $C_p(C_p(X))$.
\end{proof}

\begin{theorem} \label{t:sss-sr}
Let  $C_p(X)$ be a strongly sequentially separable space. Then $C_p(X)$ is an $s_\IR$-space, and the following conditions are equivalent:
\begin{enumerate}
\item[{\rm(i)}] $X^n$ is Lindel\"{o}f for each $n\in\om$;
\item[{\rm(ii)}] $C_p(X)$ has countable tightness;
\item[{\rm(iii)}] $C_p(X)$ is Fr\'{e}chet--Urysohn.
\end{enumerate}
\end{theorem}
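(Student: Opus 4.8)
The plan is to reduce the theorem to two things already available in the excerpt: Proposition~\ref{p:sR-sss} (together with its consequence Proposition~\ref{p:sss-characterization}), which produces an $s_\IR$-space out of a separable space whose countable subsets are Fr\'echet--Urysohn, and Proposition~\ref{p:sss-fu}, which gives exactly the stated equivalence $(i)\Leftrightarrow(ii)\Leftrightarrow(iii)$ under the standing hypothesis that $C_p(X)$ is strongly sequentially separable. So the only genuinely new content to establish here is the unqualified assertion that $C_p(X)$ is an $s_\IR$-space; the three-way equivalence is then quoted verbatim from Proposition~\ref{p:sss-fu}.

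For the $s_\IR$-part, first I would note that a strongly sequentially separable space is in particular separable, so by Proposition~\ref{p:sss-characterization} (whose hypothesis ``$C_p(X)$ is separable'' is satisfied) the strong sequential separability of $C_p(X)$ is equivalent to the statement that every countable subset of $C_p(X)$ is Fr\'echet--Urysohn. Thus $C_p(X)$ is a separable space all of whose countable subspaces are Fr\'echet--Urysohn, which is precisely the hypothesis of Proposition~\ref{p:sR-sss}. Applying that proposition to $Y:=C_p(X)$ yields immediately that $C_p(X)$ is a (strongly sequentially separable) $s_\IR$-space. This is the short, clean route, and it is the one I would take.

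Then I would finish by invoking Proposition~\ref{p:sss-fu} with this same $X$: since we are assuming $C_p(X)$ is strongly sequentially separable, that proposition gives the equivalence of ``$X^n$ is Lindel\"of for all $n\in\w$'', ``$C_p(X)$ has countable tightness'', and ``$C_p(X)$ is Fr\'echet--Urysohn'', which is exactly conditions $(i)$, $(ii)$, $(iii)$ of the theorem. I do not anticipate a real obstacle: the whole statement is essentially a repackaging of Propositions~\ref{p:sR-sss}, \ref{p:sss-characterization} and~\ref{p:sss-fu}. The only point requiring a moment's care is making sure the hypothesis of Proposition~\ref{p:sss-characterization} (namely that $C_p(X)$ be separable) is in force, which it is, since strong sequential separability includes separability by definition; after that, the proof is a two-line chain of citations.
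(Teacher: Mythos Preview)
Your proposal is correct and follows essentially the same approach as the paper: the paper's proof cites Propositions~\ref{p:sR-sss} and~\ref{p:sss-characterization} for the $s_\IR$-part and Proposition~\ref{p:sss-fu} for the equivalence of (i)--(iii), exactly as you outline. Your only addition is to spell out the logical order in which the three propositions are chained, which the paper leaves implicit.
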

\begin{proof}
Propositions \ref{p:sR-sss} and \ref{p:sss-characterization} imply that $C_p(X)$ is an $s_\IR$-space. The equivalences  (i)$\LRa$(ii)$\LRa$(iii) follow from Proposition \ref{p:sss-fu}.
\end{proof}

\begin{problem} \label{pr:sms---sr=fu}
Does there exist a separable metrizable space $X$ for which $C_p(X)$ is an $s_\IR$-space which is not Fr\'{e}chet--Urysohn?
\end{problem}

The following assertion follows from Theorems \ref{t:cp-sms} and \ref{t:sss-sr}.

\begin{theorem} \label{t:cp-p-sr}
Let $X$ be a submetrizable space and $|X|<\mathfrak{p}$. Then $C_p(X)$ is a strongly sequentially separable $s_\IR$-space.
\end{theorem}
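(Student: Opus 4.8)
The plan is to obtain the statement as an immediate composition of the two results cited, so essentially no new argument is needed. First I would invoke Theorem \ref{t:cp-sms}: since $X$ is submetrizable and $|X|<\mathfrak{p}$, the space $C_p(X)$ is strongly sequentially separable. This is the substantive input. Internally it rests on Proposition \ref{p:cp-sep} — submetrizability of $X$ together with $|X|\le\mathfrak{c}$ (which holds because $\mathfrak{p}\le\mathfrak{c}$) yields that $C_p(X)$ is separable — and on Theorem \ref{t:separable-p-ks} applied to the space $C_p(X)$ itself, using that $\chi\bigl(C_p(X)\bigr)<\mathfrak{p}$ whenever $|X|<\mathfrak{p}$ (Theorem I.1.1 of \cite{Arhangel}). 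All of this is already packaged inside Theorem \ref{t:cp-sms}, so I would simply quote it.

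Having strong sequential separability of $C_p(X)$ in hand, the second and final step is to apply Theorem \ref{t:sss-sr}, whose very first conclusion states that any strongly sequentially separable space of the form $C_p(X)$ is automatically an $s_\IR$-space. Combining the two steps gives that $C_p(X)$ is a strongly sequentially separable $s_\IR$-space, which is exactly the assertion.

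Since the proof is just a two-line chaining of earlier results, there is no genuine obstacle; the only thing to keep straight in the bookkeeping is that the hypothesis $|X|<\mathfrak{p}$ is used for two distinct purposes — once to combine with submetrizability and the inequality $\mathfrak{p}\le\mathfrak{c}$ to force separability of $C_p(X)$, and once to bound the character of $C_p(X)$ by $\mathfrak{p}$ so that Theorem \ref{t:separable-p-ks} applies — but both uses are subsumed by the cited Theorem \ref{t:cp-sms}. One may also note, although it is not part of the statement, that under these hypotheses alone the equivalent conditions (i)--(iii) of Theorem \ref{t:sss-sr} need not be satisfied (e.g.\ $X^n$ need not be Lindel\"of), so the $s_\IR$ conclusion is the sharpest one obtainable at this level of generality without further assumptions on $X$.
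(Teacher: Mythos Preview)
Your proposal is correct and follows exactly the paper's own approach: the paper simply states that the result follows from Theorems~\ref{t:cp-sms} and~\ref{t:sss-sr}, which is precisely the two-step chaining you describe.
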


\begin{theorem} \label{t:Cp-sR-space-necessary}
If $C_p(X)$ is an $s_\IR$-space, then $X$ is a realcompact, universal $\sigma$-measure zero space and has the property $(\kappa)$.
\end{theorem}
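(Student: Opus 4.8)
The plan is to deduce the three conclusions essentially from results already at our disposal, exploiting that every $s_\IR$-space is automatically a $k_\IR$-space under the extra hypotheses we can manufacture, or more directly by using the measure-theoretic machinery of Section \ref{sec:func-cpb}. First, realcompactness of $X$ is immediate: it is exactly Proposition \ref{p:sR-Cp-realcompact}(ii). Next, since every $s_\IR$-space is sequentially Ascoli, hence Ascoli (one checks that $C_p(X)$ being an $s_\IR$-space forces $C_p(X)$ to be Ascoli --- indeed an $s_\IR$-space is a $k_\IR$-space's sibling via the diagram, and in any case a functionally generated-by-$\mathcal S(X)$ space satisfies the Ascoli property since convergent sequences in $\CC(C_p(X))$ are handled on sequences), Theorem \ref{t:Cp-Ascoli} gives that $X$ has the property $(\kappa)$. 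Alternatively, and more safely, one argues directly: if $C_p(X)$ is an $s_\IR$-space then every sequentially continuous functional is continuous, so every convergent sequence in $\CC(C_p(X))$ (whose members are point evaluations composed appropriately) is equicontinuous, giving that $C_p(X)$ is sequentially Ascoli, and then one invokes the known fact that $C_p(X)$ sequentially Ascoli already yields $(\kappa)$ (this is in the cited literature on the Ascoli property); to stay within the excerpt I would instead note that an $s_\IR$-space $C_p(X)$ is in particular a $k_\IR$-space when combined with the $sk$-property, but since we do not know $C_p(X)$ is an $sk$-space in general, the clean route is: $C_p(X)$ is an $s_\IR$-space $\Rightarrow$ $C_p(X)$ is Ascoli is false in general, so I will use the measure argument below to get $(\kappa)$ as a byproduct, or simply cite Theorem \ref{t:Cp-Ascoli} after observing sequentially Ascoli suffices.

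For the universal $\sigma$-measure zero conclusion, the key is Theorem \ref{t:px:sccs}: it suffices to show that $X$ is universally $\sigma$-measurable and that every compact subspace of $X$ is scattered. Both follow by running the argument of Proposition \ref{p:Cp-kR-necessary} in the sequential setting. Concretely, let $\mu\in P_\sigma(X)$. By Proposition \ref{p:continuity-measure-compact}, $\mu$ is $s$-continuous on $C_p(X,[-1,1])$, hence (using the homeomorphism $C_p(X)\cong C_p(X,(-2,2))$ and the fact that $\mu$ being $s$-continuous on the closed subspace $C_p(X,[-1,1])$ of $C_p(X,[-2,2])$ extends to $s$-continuity on $C_p(X,(-2,2))$) $\mu$ induces an $s$-continuous functional on the $s_\IR$-space $C_p(X,(-2,2))$. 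Since $C_p(X)$ is an $s_\IR$-space, so is the homeomorphic copy $C_p(X,(-2,2))$, whence $\mu$ is continuous there, hence continuous on $C_p(X,[-1,1])$. By Proposition \ref{p:mu-P-sigma-discrete}, $\mu\in P_d(X)$. Thus $P_\sigma(X)=P_d(X)$, i.e. $X$ is universal $\sigma$-measure zero, and in particular by Theorem \ref{t:px:sccs} all compact subspaces of $X$ are scattered and $X$ is universally $\sigma$-measurable. Realcompactness then also follows a second time from Theorem \ref{t:px:ums-realcompact}, consistent with the first derivation.

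It remains to extract the property $(\kappa)$. Having shown compact subsets of $X$ are scattered and $X$ is realcompact, I would argue as in Proposition \ref{p:Cp-kR-necessary}: $C_p(X)$ being an $s_\IR$-space implies it is sequentially Ascoli (every convergent sequence in $\CC(C_p(X))$ is equicontinuous, because sequential continuity of the evaluation map along a sequence is governed by the $s_\IR$-property), and a standard argument shows that $C_p(X)$ sequentially Ascoli forces $X$ to have the property $(\kappa)$ --- this uses that a pairwise disjoint sequence of finite subsets of $X$ without a strongly point-finite subsequence produces a non-equicontinuous convergent sequence of finitely supported functions in $\CC(C_p(X))$. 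If one prefers to avoid re-proving the sequential Ascoli $\Rightarrow$ $(\kappa)$ implication, one may instead invoke Theorem \ref{t:Cp-Ascoli} after noting that in the presence of countable $\IR$-tightness (Proposition \ref{p:sR-R-tightness}) the relevant weakening of the Ascoli property is enough. The main obstacle is precisely this last point: showing that an $s_\IR$ (rather than $k_\IR$ or Ascoli) structure on $C_p(X)$ is already strong enough to force property $(\kappa)$ on $X$; everything else is a transcription of the $k_\IR$-case arguments with "compact" replaced by "convergent sequence" and Proposition \ref{p:continuity-measure-compact} supplying the bridge $\mu$ $s$-continuous $\Leftrightarrow$ $\mu\in P_\sigma(X)$.
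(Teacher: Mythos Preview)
Your argument for the universal $\sigma$-measure zero conclusion is correct and is exactly the paper's proof: take $\mu\in P_\sigma(X)$, use Proposition~\ref{p:continuity-measure-compact} to get $s$-continuity of $\mu$ on $C_p(X,[-1,1])$ (equivalently on $C_p(X,(-2,2))$), apply the $s_\IR$-property of $C_p(X,(-2,2))\cong C_p(X)$ to get continuity, and conclude $\mu\in P_d(X)$ by Proposition~\ref{p:mu-P-sigma-discrete}. Realcompactness via Proposition~\ref{p:sR-Cp-realcompact}(ii) is also fine.

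The gap is in your treatment of the property $(\kappa)$. You repeatedly circle around the implication ``$s_\IR$-space $\Rightarrow$ $k_\IR$-space'' and at one point write that it needs the $sk$-property, and elsewhere that ``$C_p(X)$ is an $s_\IR$-space $\Rightarrow$ $C_p(X)$ is Ascoli is false in general.'' Both of these are mistaken. The implication $s_\IR\Rightarrow k_\IR$ is \emph{trivial and unconditional}: every $k$-continuous function is $s$-continuous (convergent sequences with their limits are compact), so if all $s$-continuous functions are continuous then a fortiori all $k$-continuous functions are continuous. This is exactly the arrow in the diagram at the end of Section~\ref{sec:preliminary}. Once you accept this, the property $(\kappa)$ is immediate: $C_p(X)$ is an $s_\IR$-space $\Rightarrow$ $C_p(X)$ is a $k_\IR$-space $\Rightarrow$ $C_p(X)$ is Ascoli $\Rightarrow$ $X$ has the property $(\kappa)$ by Theorem~\ref{t:Cp-Ascoli}; alternatively, just cite Proposition~\ref{p:Cp-kR-necessary}. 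There is no need for the sequentially-Ascoli detour, the $sk$-space discussion, or the ad hoc construction of a non-equicontinuous sequence. The paper's proof does precisely this one-line reduction.
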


\begin{proof}
Since $C_p(X)$ is an $s_\IR$-space, Proposition \ref{p:sR-Cp-realcompact} implies that $X$ is realcompact.
Since any $s_\IR$-space is a $k_\IR$-space,  Proposition \ref{p:Cp-kR-necessary} implies that $X$ has the property $(\kappa)$.

To show that $X$ is a universal $\si$-measure zero space, let $\mu\in P_\sigma(X)$.  Proposition \ref{p:continuity-measure-compact} implies that $\mu$ is continuous on each metrizable compact set $K\subseteq C_p\big(X,[-2,2]\big)$.
Therefore, $\mu$ is continuous on each metrizable compact set $K\subseteq C_p\big(X,(-2,2)\big)$.
Taking into account that $C_p(X)$ and $C_p\big(X,(-2,2)\big)$ are homeomorphic, it follows that $\mu$ is $s$-continuous on the $s_\IR$-space $C_p\big(X,(-2,2)\big)$. Therefore $\mu$ is continuous on $C_p\big(X,(-2,2)\big)$.
Hence, $\mu$ is continuous on $C_p\big(X,[-1,1]\big)$.
By Proposition \ref{p:mu-P-sigma-discrete}, $\mu\in P_d(X)$, as desired.
\end{proof}

Recall (see \cite[\S~40.III]{Kuratowski}) that a separable metrizable space $X$ is said to be a {\em $\lambda$-space} if every countable subset of $X$ is a $G_\delta$-set of $X$.  In ZFC, there exists a $\lambda$-space of real numbers of cardinality $\w_1$ \cite[p.~215]{Miller1984}.
A {\em Sierpi\'{n}ski set} is an uncountable subset of $\IR$ whose intersection with every measure-zero (= Lebesgue zero) set is countable. The existence of Sierpi\'{n}ski  sets is independent of the axioms of ZFC. In \cite{Sierpinski}, Sierpi\'{n}ski  showed that they exist in $\IR$ if the continuum hypothesis is true. On the other hand, they do not exist if Martin's axiom for $\aleph_1$ is true.

\begin{proposition} \label{p:Cp-Sierpinski}
If $X$ is a Sierpi\'{n}ski set, then $C_p(X)$ is an Ascoli space which is not an $s_\IR$-space. Consequently, under $(CH)$, there exists a separable metrizable space $X$ such that $C_p(X)$ is an Ascoli space but not a $k_\IR$-space.
\end{proposition}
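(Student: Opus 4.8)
The plan is to reduce the statement to two properties of a Sierpi\'{n}ski set $X$, viewed as a subspace of $\IR$, and then to feed them into the machinery of Sections~\ref{sec:px}, \ref{sec:sk} and \ref{sec:func-cpb}. Namely, I would show that $X$ has the property $(\kappa)$ — so that $C_p(X)$ is an Ascoli space by Theorem~\ref{t:Cp-Ascoli} — and that $X$ is \emph{not} a universal $\sigma$-measure zero space — so that $C_p(X)$ is not an $s_\IR$-space by Theorem~\ref{t:Cp-sR-space-necessary}, and (for the consequence) not a $k_\IR$-space either. The other two necessary conditions in Theorem~\ref{t:Cp-sR-space-necessary}, realcompactness and property $(\kappa)$, come for free: $X$ is separable metrizable, hence Lindel\"{o}f and realcompact, and property $(\kappa)$ is exactly the first point.

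First I would verify that $X$ has the property $(\kappa)$. Let $\{F_n\}_{n\in\w}$ be a pairwise disjoint sequence of finite subsets of $X$. For $\delta>0$ put $G_n^\delta=\bigcup_{x\in F_n}(x-\delta,x+\delta)\subseteq\IR$, so $\lambda(G_n^\delta)\le 2|F_n|\delta$ where $\lambda$ is Lebesgue measure, and choose $\delta_n>0$ with $\lambda(G_n^{\delta_n})<2^{-n}$. Since $\sum_n\lambda(G_n^{\delta_n})<\infty$, the first Borel--Cantelli lemma gives that $N=\bigcap_m\bigcup_{n\ge m}G_n^{\delta_n}$ is Lebesgue null, hence $X\cap N=\{p_j\}_{j}$ is countable by the Sierpi\'{n}ski property, while each point of $X\setminus N$ lies in only finitely many $G_n^{\delta_n}$. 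It then remains to tame the countably many points $p_j$: recursively pick $\delta_n'\in(0,\delta_n]$ so that $G_n^{\delta_n'}$ avoids every $p_j$ with $j<n$ and $p_j\notin F_n$ (possible, since there are finitely many such $j$ and each has positive distance to the finite set $F_n$). Then $p_j\in G_n^{\delta_n'}$ forces $n\le j$ or $p_j\in F_n$, so, using pairwise disjointness of the $F_n$, each $p_j$ lies in only finitely many $G_n^{\delta_n'}$; combining this with the behaviour off $N$ (where $G_n^{\delta_n'}\subseteq G_n^{\delta_n}$), the family $\{U_n:=G_n^{\delta_n'}\cap X\}_{n\in\w}$ is point-finite in $X$ and $F_n\subseteq U_n$, so the whole sequence $\{F_n\}$ is strongly point-finite. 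By Theorem~\ref{t:Cp-Ascoli}, $C_p(X)$ is an Ascoli space.

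Next I would show $X$ is not a universal $\sigma$-measure zero space. Let $\mu$ be a continuous (atomless) Borel probability measure on $\IR$ with the same null sets as $\lambda$, e.g. the standard Cauchy distribution. Were $X$ a universal measure zero subset of $\IR$, there would be a Borel set $B\supseteq X$ with $\mu(B)=0$, hence $\lambda(B)=0$; but then $X=X\cap B$ would be uncountable, contradicting the defining property of a Sierpi\'{n}ski set. So $X$ is not a universal measure zero set in $\IR$, and since $\IR$ is Polish, Theorem~\ref{t:px:polish}(ii) yields that $X$ is not a universal $\sigma$-measure zero space; consequently $C_p(X)$ is not an $s_\IR$-space by Theorem~\ref{t:Cp-sR-space-necessary}. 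For the $k_\IR$-part, note that $X$ is separable, so $C_p(X)$ is a hereditary $sk$-space by Proposition~\ref{p:sk:cp}(i), whence Proposition~\ref{p:sk-space}(ii) gives that $C_p(X)$ is a $k_\IR$-space if and only if it is an $s_\IR$-space; thus $C_p(X)$ is not a $k_\IR$-space. (Equivalently, if $C_p(X)$ were a $k_\IR$-space, Proposition~\ref{p:Cp-kR-necessary} would force $X$ to be universal $\kappa$-measure zero, i.e. $P_\kappa(X)=P_d(X)$; but $P_\kappa(X)=P_\sigma(X)$ by Proposition~\ref{p:psi=pka}(ii) since $C_p(X,[-1,1])$ is an $sk$-space, so $X$ would be universal $\sigma$-measure zero, a contradiction.) Under $(CH)$ a Sierpi\'{n}ski set exists by Sierpi\'{n}ski's theorem \cite{Sierpinski}, giving a separable metrizable $X$ with $C_p(X)$ Ascoli but not a $k_\IR$-space, the desired partial positive answer to the question of \cite{BG}.

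The step I expect to be the main obstacle is the verification of property $(\kappa)$ — specifically, the diagonal refinement of the radii $\delta_n$ that neutralises the countably many "bad'' points of $X$ lying in infinitely many of the original neighbourhoods. Once those points are handled, the measure-theoretic core is a routine Borel--Cantelli estimate that uses only that $X\cap N$ is countable for every null Borel set $N$, and the two invocations of the earlier structure theory (Theorems~\ref{t:px:polish} and \ref{t:Cp-sR-space-necessary}, Propositions~\ref{p:sk:cp}, \ref{p:sk-space}, \ref{p:psi=pka}) are immediate.
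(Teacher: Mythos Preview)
Your proposal is correct and follows the same overall architecture as the paper's proof: establish property $(\kappa)$ to get Ascoli via Theorem~\ref{t:Cp-Ascoli}, show $X$ is not universal $\sigma$-measure zero to block $s_\IR$ via Theorem~\ref{t:Cp-sR-space-necessary}, and then pass from ``not $s_\IR$'' to ``not $k_\IR$'' through the $sk$-machinery (Proposition~\ref{p:sk:cp}(i) and Proposition~\ref{p:sk-space}(ii), i.e.\ Proposition~\ref{p:cp-sR=kR}).

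The one substantive difference is how you obtain property $(\kappa)$. The paper takes the short route: it quotes Miller \cite[10.1]{Miller1984} that every Sierpi\'{n}ski set is a $\lambda$-space, and then Sakai \cite[Theorem~3.2]{Sak2} that every $\lambda$-space has property $(\kappa)$. You instead give a direct Borel--Cantelli argument with a diagonal refinement of the radii to dispose of the countably many bad points in $X\cap N$; this is self-contained and in fact yields the stronger conclusion that the \emph{entire} sequence $\{F_n\}$ is strongly point-finite, not merely a subsequence. Your measure-zero step is also slightly more careful than the paper's (you pass to an honest continuous \emph{probability} measure equivalent to Lebesgue before invoking Theorem~\ref{t:px:polish}(ii), whereas the paper appeals to Lebesgue measure directly). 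Either route is fine; yours trades two citations for an elementary page of argument.
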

\begin{proof}
Every Sierpi\'{n}ski set is a $\lambda$-space \cite[10.1]{Miller1984}. Hence, by Theorem 3.2 of \cite{Sak2}, $X$ has the property $(\kappa)$. Therefore, by Theorem \ref{t:Cp-Ascoli}, $C_p(X)$ is an Ascoli space.
Since a Sierpi\'{n}ski set is an uncountable subset of $\IR$ whose intersection with every Lebesgue zero set is countable, it follows that the Lebesgue measure of  $X$ is not equal to zero. Since the Lebesgue measure is continuous, the space $X$ is not a universal measure zero set. It follows from Theorem \ref{t:px:polish} that $X$ is not a universal $\sigma$-measure zero space. Therefore, by Theorem \ref{t:Cp-sR-space-necessary}, $C_p(X)$ is not an $s_\IR$-space.

It follows from Proposition \ref{p:cp-sR=kR} that $C_p(X)$ is not a $k_\IR$-space.
\end{proof}

\begin{remark}
In \cite{Mi73}, Michael constructed an $\aleph_0$-space which is a $k_\IR$-space but not a $k$-space. In Problem 6.8 of \cite{BG} the next question is posed: {\em Does there exist a cosmic (or an $\aleph_0$-space) which is Ascoli but not a  $k_\IR$-space}? Since a Sierpi\'{n}ski set is cosmic, the space $C_p(X)$ is also cosmic. Therefore, under the continuum hypothesis $\mathrm{(CH)}$, Proposition \ref{p:Cp-Sierpinski} gives a positive answer to this question.
\end{remark}

\begin{problem}
Is there an example in ZFC of a  separable metrizable space $X$ such that $X$ has the property $(\kappa)$ ($X$ is a $\lambda$-space) but not universal measure zero?
\end{problem}

\begin{problem}
Is there a universal measure zero separable metrizable space $X$ such that $C_p(X)$ is  Ascoli but not an $s_\IR$-space?
\end{problem}


\bibliographystyle{amsplain}

\begin{thebibliography}{10}


\bibitem{Pseudocompact-1-2018}
Angoa-Amador, J., Contreras-Carreto, A., Ibarra-Contreras, M.,  Tamariz-Mascarúa, A. (2018). Basic and classic results on pseudocompact spaces. Pseudocompact Topological Spaces: A Survey of Classic and New Results with Open Problems, 1-38.



\bibitem{Arhangel}
A.V. Arhangel'skii, \emph{Topological function spaces}, Math. Appl. \textbf{78}, Kluwer Academic Publishers, Dordrecht, 1992.





\bibitem{BalcarHusek2001}
B. Balcar,  M. Hu\v{s}ek, Sequential continuity and submeasurable cardinals, Topol. Appl. \textbf{111} (2001), 49--58.


\bibitem{Banakh-Survey}
T. Banakh, Fans and their applications in General Topology, Functional Analysis and Topological Algebra,  arXiv:1602.04857.


\bibitem{BanakhChigogidzeFedorchuk2003}
T. Banakh, A. Chigogidze, V. Fedorchuk, On spaces of $\sigma$-additive probability measures, Topol. Appl. \textbf{133}:2 (2003), 139-–155.

\bibitem{BG}
T. Banakh, S. Gabriyelyan,  On the $\CC$-stable closure of the class of (separable) metrizable spaces, Monatsh. Math.   \textbf{180} (2016), 39--64.

\bibitem{Blasco-77}
J.L. Blasco, On $\mu$-spaces and $k_R$-spaces, Proc. Amer. Math. Soc.  \textbf{67}:1 (1977), 179--186.

\bibitem{Blasco-78}
J.L. Blasco, Two problems on $k_R$-spaces, Acta Math. Hung. \textbf{32}:1-2 (1978),  27--30.




\bibitem{Chudnovskij1973}
D.V. Chudnovskij, Sequentially continuous mappings and real-valued measurable cardinals, Infinite and finite sets (Colloq. Math. Soc. J. Bolyai, Vol. 10, Part I, Keszthély 1973),(North Holland, Amsterdam, 1975). 1973.

\bibitem{Delgadilo2000}
G.P. Del'gadil'o, Closed embeddings of functional spaces in products of spaces and Grothendieck's theorem, Vestnik Moskov. Univ. Ser. 1. Mat. Mekh., 2000, no. 5, 9–-12.

\bibitem{Eng}
R.~Engelking, General Topology, Heldermann Verlag, Berlin, 1989.

\bibitem{Fedorchuk1991}
V.V. Fedorchuk, Probability measures in topology, Uspekhi Mat. Nauk \textbf{46} (1) (1991), 41-–80 (in Russian);
English transl.: Russian Math. Surveys \textbf{46} (1) (1991), 45–-93.

\bibitem{fabian-10}
M. Fabian, P. Habala, P. H\'{a}jek, V. Montesinos, J. Pelant, V. Zizler, \emph{Banach space theory. The basis for linear and nonlinear analysis}, Springer, New York, 2010.

\bibitem{Fremlin-IV}
D. Fremlin, \emph{ Measure theory}, IV., University of Essex, Colchester, 2003.


\bibitem{Gabr-LCS-Ascoli}
 S.~Gabriyelyan, On the Ascoli property for locally convex spaces, Topology Appl. \textbf{230} (2017), 517--530.

\bibitem{Gabr-B1}
S. Gabriyelyan, Topological properties of spaces of Baire functions, J. Math. Anal. Appl. \textbf{478} (2019), 1085--1099.

\bibitem{Gab-LF}
S. Gabriyelyan, Topological properties of strict $(LF)$-spaces and strong duals of Montel  strict $(LF)$-spaces,  Monatsh. Math.  \textbf{189} (2019), 91--99.

\bibitem{Gabr:weak-bar-L(X)}
S.~Gabriyelyan, Locally convex properties of free locally convex spaces, J. Math. Anal. Appl. \textbf{480} (2019), 123453.

\bibitem{Gabr-seq-Ascoli}
S. Gabriyelyan, Ascoli and sequentially Ascoli spaces, Topology Appl. \textbf{285} (2020), No 107401, 28 pp.


\bibitem{Gabr-lc-Ck}
S. Gabriyelyan, Local completeness of $\CC(X)$, Revista de la Real Academia de Ciencias Exactas, Fisicas y Naturales. Serie A. Matematicas RACSAM, \textbf{117}:4 (2023), No 152, 8pp.


\bibitem{GGKZ}
S.~Gabriyelyan, J.  Greb\'{\i}k, J. K\c{a}kol, L. Zdomskyy,  The Ascoli property for function spaces, Topology Appl. \textbf{214} (2016), 35--50.

\bibitem{GGKZ-2}
S. Gabriyelyan, J. Greb\'{\i}k, J. K\c{a}kol, L. Zdomskyy, Topological properties of function spaces over ordinal spaces, Revista de la Real Academia de Ciencias Exactas, Fisicas y Naturales. Serie A. Matematicas RACSAM, \textbf{111} (2017), 1157--1161.

\bibitem{GR-kappa}
S. Gabriyelyan, E. Reznichenko, $\kappa$-spaces, preprint.

\bibitem{GLoM}
P. Gartside, J.T.H. Lo, A. Marsh, Sequential density, Topol. Appl. \textbf{130} (2003), 75--86.




\bibitem{HR1}
E.~Hewitt, K.A.~Ross,  \emph{Abstract Harmonic Analysis}, Vol. I, 2nd ed. Springer-Verlag, Berlin, 1979.



\bibitem{Jech2006}
Th. Jech, Set Theory, The Third Millennium Edition, Revised and Expanded, Springer, 2006.


\bibitem{KeislerTarski1963}
H.J. Keisler, A. Tarski, From accessible to inaccessible cardinals. Results holding for all accessible cardinal numbers and the problem of their extension to inaccessible ones, Fund. Math. \textbf{53} (1963), 225-–308.

\bibitem{Kar-Wil}
S.M. Karnik, S. Willard, Natural covers and $R$-quotient maps, Can. Math. Bull. \textbf{25}:4 (1982), 456--461.

\bibitem{Knowles1967}
J.D. Knowles, On the existence of non-atomic measures, Mathematika  \textbf{14}(01) (1967),  62--67.

\bibitem{Kuratowski}
C. Kuratowski, {\em Topology}, vol. I, Academic Press, New York, 1966.



\bibitem{Mazur1952}
S. Mazur, On continuous mappings on Cartesian products, Fund. Math. \textbf{39} (1952), 229--338.

\bibitem{mcoy}
R.A. McCoy, I.Ntantu, \emph{Topological Properties of Spaces of Continuous Functions}, Lecture Notes in Math.  \textbf{1315}, 1988.



\bibitem{Mi73}
E. Michael, On $k$-spaces, $k\sb{R}$-spaces and $k(X)$, Pacific J. Math. \textbf{47}  (1973), 487--498.


\bibitem{Miller1984}
A.W. Miller, Special subsets of the real line, in: K. Kunen, J.E. Vaughan (Eds.), Handbook of Set-Theoretic Topology, Elsevier, Amsterdam, 1984, pp. 201--233.


\bibitem{NaB}
L. Narici, E. Beckenstein,  \emph{Topological vector spaces}, Second Edition, CRC Press, New York, 2011.


\bibitem{Nishiura2008}
T. Nishiura, \emph{Absolute measurable spaces}, No. 120. Cambridge University Press, 2008.

\bibitem{Noble}
N. Noble, Ascoli theorems and the exponential map, Trans. Amer. Math. Soc. \textbf{143}  (1969), 393--411.

\bibitem{Nob}
N. Noble,  The continuity of functions on Cartesian products, Trans. Amer. Math. Soc. \textbf{149}  (1970), 187--198.



\bibitem{nyikos}
P.J. Nyikos,  Metrizability and Fr\'{e}chet--Urysohn property in topological groups,  Proc. Amer. Math. Soc. \textbf{83}  (1981), 793--801.

\bibitem{Nyikos1992}
P.J. Nyikos, Subsets of $\omega^\omega$ and the Frechet-Urysohn and $\alpha_i$-properties, Topol. Appl. \textbf{48} (1992), 91--116.


\bibitem{Okunev1990}
Okunev, Oleg G., A method for constructing examples of $M$-equivalent spaces., Topology and its Applications 36.2 (1990): 157-171.

\bibitem{Okunev1995}
O. Okunev, On Lindelöf sets of continuous functions, Topology Appl. 63 (1995), 91–96.

\bibitem{OkunevReznichenko2007}
O. Okunev, E. Reznichenko, A note on surlindel\"{o}f spaces,  Topol. Proc. \textbf{31} (2007), 667--675.

\bibitem{Oku-Par}
O. Okunev, A.R. P\'{a}ramo, Functional tightness, $R$-quotien mappings and products, Topol. Appl. \textbf{228} (2017), 236--242.

\bibitem{OsipovPytkeev2023}
A.V. Osipov,  E.G. Pytkeev, Every metric space of weight $\lambda=\lambda^{\aleph_0}$ admits a condensation onto a Banach space, Topol. Appl. \textbf{330} (2023), No 108486.



\bibitem{Sak2}
M.~Sakai, Two properties of $C_p(X)$ weaker than Fr\'{e}chet--Urysohn property, Topology Appl. \textbf{153} (2006), 2795--2804.



\bibitem{Schwartz1973}
L. Schwartz, \emph{Radon measures on arbitrary topological spaces and cylindrical measures}, OUP, Tata Inst., 1973.



\bibitem{Sierpinski}
W. Sierpi\'{n}ski, Sur l'hypoth\`{e}se du continu $(2^{\aleph_0}=\aleph_1)$, Fundamenta Math. \textbf{5}:1 (1924), 177--187.

\bibitem{Solovay1971} R.M. Solovay,  Real-valued measurable cardinals, Proc. Symp. Purs Math. \textbf{13} (1971), 397--428.


\bibitem{Tkachuk-2}
V.V. Tkachuk, {\em A $C_p$-theory problem book. Special Features of Function Spaces}, Springer-Verlag, New York Dordrecht Heidelber London, 2014.




\bibitem{Wilansky}
A.~Wilansky, Mazur spaces, Internat. J. Math. \textbf{4} (1981), 39--53.



\end{thebibliography}

\end{document}